\definecolor{gr}{rgb}   {0.,   0.69,   0.23 }
\definecolor{bl}{rgb}   {0.,   0.5,   1. }
\definecolor{mg}{rgb}   {0.85,  0.,    0.85}
\definecolor{yl}{rgb}   {0.8,  0.7,   0.}
\definecolor{or}{rgb}  {0.7,0.2,0.2}
\newtheorem{theorem}{Theorem} [section]
\newtheorem{lemma}[theorem]{Lemma}
\newtheorem{proposition}[theorem]{Proposition}
\newtheorem{remark}[theorem]{Remark}
\newtheorem{definition}[theorem]{Definition}
\newcommand{\1}{\hspace{0.5mm}\text{I}\hspace{0.5mm}}
\newcommand{\II}{\text{I \hspace{-2.8mm} I} }
\newcommand{\noi}{\noindent}
\newcommand{\Z}{\mathbb{Z}}
\newcommand{\R}{\mathbb{R}}
\newcommand{\C}{\mathbb{C}}
\newcommand{\T}{\mathbb{T}}
\newcommand{\N}{\mathbb{N}}
\let\Re=\undefined\DeclareMathOperator*{\Re}{Re}
\let\Im=\undefined\DeclareMathOperator*{\Im}{Im}
\let\P= \undefined
\newcommand{\P}{\mathbf{P}}
\newcommand{\F}{\mathcal{F}}
\newcommand{\NN}{\mathcal{N}}
\newcommand{\RR}{\mathcal{R}}
\newcommand{\TT}{\mathcal{T}}
\newcommand{\EE}{\mathcal{E}}
\newcommand{\al}{\alpha}
\newcommand{\dl}{\delta}
\newcommand{\eps}{\varepsilon}
\newcommand{\g}{\gamma}
\newcommand{\G}{\Gamma}
\newcommand{\ld}{\lambda}
\newcommand{\s}{\sigma}
\newcommand{\ft}{\widehat}
\newcommand{\wt}{\widetilde}
\newcommand{\cj}{\overline}
\newcommand{\dx}{\partial_x}
\newcommand{\dt}{\partial_t}
\newcommand{\jb}[1]
{\langle #1 \rangle}
\renewcommand{\l}{\ell}
\newcommand{\les}{\lesssim}
\newcommand{\ind}{\mathbf 1}
\newcommand{\bn}{{\bf n}}
\newcommand{\GG}{\mathcal{G}}
\newcommand{\E}{\mathbb{E}}
\renewcommand{\o}{\omega}
\renewcommand{\O}{\Omega}
\numberwithin{equation}{section}
\numberwithin{theorem}{section}
\newtheorem*{ackno}{Acknowledgements}
\tikzset{
	dot/.style={circle,fill=black,draw=black,inner sep=1pt,minimum size=0.5mm},
	>=stealth,
	}
\tikzset{
	ddot/.style={circle,fill=white,draw=black,inner sep=2pt,minimum size=0.8mm},
	>=stealth,
	}
\tikzset{decision/.style={ 
        draw,
        diamond,
        aspect=1.5
    }}
\tikzset{dia2/.style
={diamond,fill=white,draw=black,inner sep=0pt,minimum size=1mm},
	>=stealth,
	}
\tikzset{dia/.style
={star,fill=black,draw=black,inner sep=0pt,minimum size=1mm},
	>=stealth,
	}
\def\DeclareSymbol#1#2#3{\expandafter\gdef\csname MH@symb@#1\endcsname{\tikz[baseline=#2,scale=0.15]{#3}}}
\def\<#1>{\csname MH@symb@#1\endcsname}
\tikzstyle{dot1} = [ draw=  gray!00, 
\tikzstyle{dot2} = [ draw=  black, 
\tikzstyle{dot3} = [ draw=  gray!00, 
\begin{document}

\baselineskip = 14pt

\title[Quasi-invariant measures for the cubic fourth order NLS]
{An optimal regularity result on the quasi-invariant Gaussian measures for the cubic fourth
order nonlinear Schr\"odinger equation}

\author[T.~Oh, P.~Sosoe, and  N.~Tzvetkov]
{Tadahiro Oh, Philippe Sosoe, and Nikolay Tzvetkov}

\address{
Tadahiro Oh\\ School of Mathematics\\
The University of Edinburgh\\
and The Maxwell Institute for the Mathematical Sciences\\
James Clerk Maxwell Building\\
The King's Buildings\\
Peter Guthrie Tait Road\\
Edinburgh\\ 
EH9 3FD\\
 United Kingdom}

\email{hiro.oh@ed.ac.uk}

\address{
Philippe Sosoe\\
Department of Mathematics\\
Cornell University\\ 
584 Malott Hall\\ Ithaca\\ New York 14853\\ USA
}

\email{psosoe@math.cornell.edu}

\address{
Nikolay Tzvetkov\\
Universit\'e de Cergy-Pontoise\\
 2, av.~Adolphe Chauvin\\
  95302 Cergy-Pontoise Cedex \\
  France}

\email{nikolay.tzvetkov@u-cergy.fr}

\subjclass[2010]{35Q55}

\keywords{fourth order nonlinear Schr\"odinger equation;  
biharmonic nonlinear Schr\"odinger equation;  
quasi-invariant measure; normal form method}

\begin{abstract}
We study the transport properties of the Gaussian measures
on Sobolev spaces under the dynamics of  the cubic fourth order  nonlinear Schr\"odinger equation
on the circle.
In particular, 
we establish an optimal regularity result
for quasi-invariance of  the mean-zero Gaussian measures on Sobolev spaces.
The main new ingredient is an improved energy estimate
established by performing an infinite iteration of normal form reductions
on the energy functional.
Furthermore, we show that the dispersion is essential
for such a quasi-invariance result by 
proving  non quasi-invariance
of  the Gaussian measures 
under the dynamics of  the dispersionless model.

\end{abstract}


\maketitle

\vspace*{-5mm}

\tableofcontents

\section{Introduction}
\label{SEC:1}

In this paper, we complete the study of the transport properties of Gaussian measures on Sobolev spaces for the cubic nonlinear Schr\"odinger equation (NLS) with quartic dispersion, 
initiated  by the first and third authors in \cite{OTz1}.

The question addressed in this work is motivated by a number of perspectives. In probability theory, absolute continuity properties for the pushforward of Gaussian measures under linear and nonlinear 
transformations have been studied extensively, starting with the classical work of Cameron-Martin; see \cite{CM, KU,  RA}. More generally, questions of absolute continuity of the distribution of solutions 
to differential and stochastic differential equations 
with respect to a given initial distribution or some chosen reference measure are also central to stochastic analysis. For example, close to the topic 
 of the current paper, 
see the work of Cruzeiro \cite{Cru1,Cru2}.
We also note a recent work 
\cite{NRBSS} 
establishing absolute continuity 
of the Gaussian measure associated to the complex Brownian bridge
on the circle
under certain gauge transformations.

On the other hand, in the analysis of partial differential equations (PDEs), 
Hamiltonian PDE dynamics with initial data distributed according to measures of Gibbs type have been studied intensively over the last two decades, starting with the work of Bourgain \cite{BO94, BO96}. 
See \cite{OTz1} for the references therein.
These Gibbs-type measures are constructed as weighted Gaussian measures
and are usually supported on Sobolev spaces of low regularity
with the exception of completely integrable Hamiltonian PDEs
such as the cubic NLS on the circle.
In the approach initiated by Bourgain and successfully 
applied to  many equations since then, 
invariance of such Gibbs-type measures 
under the flow of the equation 
has been established
by combining the Hamiltonian structure of suitable finite dimensional approximations,
in particular invariance of the finite dimensional Gibbs-type measures, 
with PDE approximation arguments.
Invariance of such weighted Gaussian measures
implies 
absolute continuity of the pushforward of 
the base Gaussian measures.
If we substitute the underlying measure with a different Gaussian measure, however, 
the question of absolute continuity becomes non-trivial. 
See also \cite{Bo-Gelfand}
for a related question by Gel'fand on building   a direct method to prove absolute continuity properties
without relying on invariant measures.

In \cite{TzBBM}, the third author initiated the study of transport properties of Gaussian measures 
under the flow of a Hamiltonian PDE, combining probabilistic and PDE techniques. 
The result proved there 
 for a specific Hamiltonian equation (the generalized BBM equation) went beyond general results 
 on the pushforwards of Gaussian measures by nonlinear transformations such as Ramer's \cite{RA}. 
 It was shown in \cite{TzBBM} that a key step to showing absolute continuity is to establish a smoothing effect on the nonlinear part. 
 In \cite{OTz1}, the first and third authors studied the transport of Gaussian measures 
 for the cubic NLS with quartic dispersion. 
 An additional difficulty compared to \cite{TzBBM} is the absence of explicit smoothing coming from the nonlinearity,
 thus requiring the use of {\it dispersion} in an explicit manner.
 In~\cite{OTz1}, such dispersion was manifested
 through the normal form method.
 In this paper, we improve the result in \cite{OTz1} to the optimal range of Sobolev exponents
 by pushing  the normal form method to the limit. 
Furthermore, we present a result showing that, in the absence of dispersion, the distribution of the solution of the resulting dispersionless equation is {\it not}
absolutely continuous  with respect to the Gaussian initial data for any non-zero time. 
This in particular establishes the necessity of dispersion 
for an absolute continuity property.
Since the linear equation is easily seen to leave the distribution of the Gaussian initial data invariant, this highlights that the question of transport properties for a Hamiltonian PDE is a probabilistic manifestation of the competition between the dispersion and the nonlinear part, familiar for the study of nonlinear dispersive equations.

\subsection{The equation} 

We consider the cubic fourth order nonlinear Schr\"odinger equation (4NLS) on the circle 
$\mathbb{T} = \R /(2\pi \Z)$:
\begin{equation}
\label{4NLS0}
\begin{cases}
i \dt u = \dx^4 u \pm |u|^2u\\
u|_{t=0}= u_0,
\end{cases}
  \qquad (x,t)\in \mathbb{T}\times \mathbb{R},
\end{equation}

\noi
where  $u$ is a complex-valued function 
on $\T\times \R$.
The equation \eqref{4NLS0}
is also called 
the biharmonic NLS
and it was studied in \cite{IK, Turitsyn} in the context of stability of solitons in magnetic materials.
See also \cite{Karpman, KS, BKS,  FIP}
for a more general
class of fourth order NLS:
\begin{align}
i \dt u =  \ld \dx^2 u + \mu  \dx^4 u \pm  |u|^{2}u. 
\label{4NLS}
\end{align}

The equation \eqref{4NLS0} is a Hamiltonian PDE with the conserved Hamiltonian:
\[H(u)=\frac{1}{2}\int_{\T}|\dx^2 u|^2 dx
\pm \frac{1}{4}\int_\T |u|^4 dx.\]

\noi
In addition to the Hamiltonian, the flow of the equation \eqref{4NLS0}
preserves the $L^2$-norm, or the so-called ``mass'':
\begin{equation*}
M(u) = \int_{\mathbb{T}}|u|^2dx.
\end{equation*}

\noi
This mass conservation law was used in \cite{OTz1} to prove the following sharp global well-posedness result.

\begin{proposition}
The cubic 4NLS  \eqref{4NLS0} is globally well-posed in $H^\s(\T)$ for $\s\ge 0$.
\end{proposition}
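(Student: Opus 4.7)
The plan is to prove local well-posedness in $L^2(\T)$ by a Picard iteration in a Bourgain-type space, then promote this to global well-posedness in $L^2$ via the mass conservation law, and finally extend to $H^s$ for $s > 0$ by persistence of regularity.

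For the local theory, I would work in the space $X^{0,b}(\T\times\R)$ with $b$ slightly larger than $\tfrac12$, adapted to the linear propagator $e^{-it\dx^4}$. The heart of the matter is a trilinear estimate of the form
\[
\Big\| \prod_{j=1}^{3} u_j^\ast \Big\|_{X^{0,b-1}} \lesssim \prod_{j=1}^{3} \|u_j\|_{X^{0,b}},
\]
where each $u_j^\ast$ is $u_j$ or $\cj{u_j}$. On $\T$ this can be reduced to an $L^4_{t,x}$-Strichartz estimate $\|u\|_{L^4_{t,x}} \lesssim \|u\|_{X^{0,b}}$ via Cauchy--Schwarz. The Strichartz estimate in turn follows from Bourgain's divisor-counting argument applied to the quartic phase $n^4$: the identity $n_1^4 - n_2^4 + n_3^4 - n^4 = 0$ combined with the factorization $n^4-m^4=(n-m)(n+m)(n^2+m^2)$ imposes strong separation between frequencies (even stronger than in the cubic NLS case), so that the number of resonant quadruples is easily controlled. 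Given this trilinear estimate, the Duhamel map is a contraction on a ball in the localized space $X^{0,b}_\delta$ for a time $\delta = \delta(\|u_0\|_{L^2}) > 0$.

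The mass conservation $M(u) = \|u\|_{L^2}^2$ then allows iteration of the local theory on consecutive intervals of uniform length, yielding global existence and uniqueness in $L^2(\T)$. For $s > 0$, I would run the same contraction in $X^{s,b}$ and combine the corresponding trilinear estimate
\[
\Big\| \prod_{j=1}^{3} u_j^\ast \Big\|_{X^{s,b-1}} \lesssim \sum_{\text{perm.}} \|u_1\|_{X^{s,b}} \|u_2\|_{X^{0,b}} \|u_3\|_{X^{0,b}}
\]
with the already-established $L^2$ bound; a Gronwall-type argument on short intervals shows that $\|u(t)\|_{H^s}$ remains finite, giving global well-posedness in $H^s$ for every $s\ge 0$.

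The main obstacle is the trilinear estimate at $s = 0$: this is the critical threshold in the sense that no smoothing is available from Sobolev embedding, and one must exploit the dispersion through the modulation weight $\jb{\tau - n^4}^b$. Once one commits to a Littlewood--Paley / divisor-counting analysis of the resonance function associated to $n^4$, however, the argument is a clean variant of Bourgain's original $L^4$-Strichartz proof for cubic NLS on $\T$, and in fact slightly easier because the higher-order symbol $n^4$ produces more divisor gain than $n^2$. No genuinely new analytic ingredient beyond what is standard for 4NLS on the circle is needed.
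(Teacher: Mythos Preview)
Your approach is correct and is the standard route to this result. Note, however, that the paper does not actually prove this proposition: it is stated as a known result from the earlier paper \cite{OTz1} by the first and third authors, where the mass conservation law is used to upgrade the local $L^2$ theory to a global one. Your outline --- local well-posedness in $X^{0,b}$ via an $L^4$-Strichartz/trilinear estimate exploiting the factorization of the quartic phase, iteration via mass conservation, and persistence of regularity for $s>0$ --- is precisely the strategy carried out in that reference, so there is nothing to compare here beyond the fact that the present paper simply cites the result rather than reproving it.
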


This global well-posedness result in $L^2(\T)$
is sharp in the sense that  the cubic 4NLS~\eqref{4NLS0} is ill-posed in negative Sobolev spaces
in the sense of non-existence of solutions.
See~\cite{GO, OTz1, OW}.

The defocusing/focusing nature of the equation \eqref{4NLS0} does not play any role
in the following.
Hence,  we assume that it is defocusing, i.e.~with the $+$ sign in \eqref{4NLS0}.

\subsection{Quasi-invariance of $\mu_s$}
Given $s > \frac 12$, 
we consider the mean-zero Gaussian measures $\mu_s$ on $L^2(\T)$ with covariance operator 
$2(\text{Id}-\dx^2)^{-s}$, formally written as 
\begin{align}
 d \mu_s 
  = Z_s^{-1} e^{-\frac 12 \| u\|_{H^s}^2} du 
  =  \prod_{n \in \Z} Z_{s, n}^{-1}e^{-\frac 12 \jb{n}^{2s} |\ft u_n|^2}   d\ft u_n .
\label{gauss0}
 \end{align}

\noi
As we see below, 
the Gaussian measure $\mu_s$ is not supported on $H^s(\T)$, i.e.~$\mu_s(H^s(\T)) = 0$,
and we need to work in a larger space.
See \eqref{reg}.
This is due to the infinite dimensionality of the problem.

The covariance operator is diagonalized by the Fourier basis on $\mathbb{T}$ 
and the Gaussian measure $\mu_s$ defined above is in fact the induced probability measure
under the map\footnote{Henceforth, we drop the harmless factor of $2\pi$,
if it does not play an important role.}
\begin{align}
 \o \in \O \longmapsto u^\o(x) = u(x; \o) = \sum_{n \in \Z} \frac{g_n(\o)}{\jb{n}^s}e^{inx}, 
\label{gauss1}
 \end{align}

\noi
where  $\jb{\,\cdot\,} = (1+|\cdot|^2)^\frac{1}{2}$
and 
$\{ g_n \}_{n \in \Z}$ is a sequence of independent standard complex-valued 
Gaussian random variables on a probability space $(\O, \mathcal{F}, P)$, i.e.~$\text{Var}(g_n) = 2$.
From this random Fourier series representation, 
it is easy to see that 
 $u^\o$ in \eqref{gauss1} lies   in $H^{\s}(\T)$ 
  almost surely
 if and only if 
 \begin{align}
 \s < s -\frac 12.
\label{reg}
 \end{align}
 
 \noi
Lastly, note that, for the same range of $\s$, 
the triplet $(H^s, H^\s, \mu_s)$ forms an abstract Wiener space.
See \cite{GROSS, Kuo2}.

In the following, we continue to study the transport
property of the Gaussian measure $\mu_s$ under the dynamics of the cubic 4NLS \eqref{4NLS0}.
Before proceeding further, recall the following definition of quasi-invariant measures;
given a measure space $(X,\mu)$, 
we say that the measure  is {\it quasi-invariant} under a measurable transformation $T:X\to X$ if  $\mu$ and the pushforward of $\mu$ under $T$, defined by $T_*\mu = \mu\circ T^{-1}$, are 
equivalent, i.e.~mutually absolutely continuous with respect to each other.

Our first result improves the quasi-invariance result in \cite{OTz1} to the optimal range of Sobolev exponents.

\begin{theorem}\label{THM:1}
Let $s > \frac 12$.  Then, the Gaussian measure $\mu_s$ is quasi-invariant under the flow of 
the  cubic 4NLS
 \eqref{4NLS0}. 

\end{theorem}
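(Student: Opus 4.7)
The plan is to follow the Cameron--Martin--Bourgain--Tzvetkov strategy for quasi--invariance: introduce a frequency truncation $\Phi_N$ of the flow (truncated 4NLS on $E_N = \text{span}\{e^{inx}:|n|\le N\}$), produce a \emph{weighted} version of $\mu_s$ on the truncated problem for which the truncated Liouville theorem applies, establish a sharp energy estimate for a modified energy along $\Phi_N$, deduce a one--parameter differential inequality on the measure of $\Phi_N(-t)(A)$, integrate via Gr\"onwall, and finally pass to the limit $N\to\infty$ using the global well--posedness quoted in the excerpt together with a suitable approximation of $\Phi$ by $\Phi_N$.

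More concretely, I would begin by fixing a cutoff $s>\tfrac12$ and a large parameter $N$, and writing the truncated 4NLS in symplectic coordinates on $E_N$. Since the truncated Hamiltonian generates a finite--dimensional Hamiltonian flow, Lebesgue measure and hence the measure $e^{-\frac12\|P_Nu\|_{H^s}^2}dL_N$ transforms in a computable way under $\Phi_N(t)$. Differentiating $t\mapsto \frac12\|\Phi_N(t)u\|_{H^s}^2$ at $t=0$ produces a multilinear expression of the schematic form
\begin{equation*}
\frac{d}{dt}\tfrac12\|u\|_{H^s}^2 = \Im \sum_{n_1-n_2+n_3-n=0} \jb{n}^{2s}\,\ft u_{n_1}\cj{\ft u_{n_2}}\ft u_{n_3}\cj{\ft u_n},
\end{equation*}
whose phase $\Phi(\bar n)=n_1^4-n_2^4+n_3^4-n^4$ factors, on the non--resonant part, as a product with one large factor. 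On this non--resonant set I would perform a normal form reduction, i.e.\ an integration by parts in $t$ that moves $\frac{1}{i\Phi}$ onto the quadrilinear expression and pays instead a time derivative of $\ft u_{n_j}$ coming from the equation, which is itself cubic. This yields an equivalent \emph{modified energy} $E_s(u)=\|u\|_{H^s}^2+R_s(u)$ with improved, sextic remainder $\frac{d}{dt}R_s$.

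The crucial and hardest step is the \emph{infinite iteration}: each normal form step gains a factor of $1/\Phi$ but doubles the multilinearity, and for the range $s>\tfrac12$ one cannot stop at a finite step, since the arithmetic gain from any finite iteration falls short of the required smoothing. I would therefore partition each multilinear expression into a resonant piece (whose structure must be massaged so that it either vanishes, is lower order, or can be absorbed into the modified energy) and a non--resonant piece on which the next integration by parts is performed; the iterated normal form then produces an infinite series $E_s(u) = \|u\|_{H^s}^2 + \sum_{k\ge 2} R_s^{(k)}(u)$, together with a quasi--conservation law
\begin{equation*}
\Big|\frac{d}{dt}E_s(u)\Big| \le C\bigl(\|u\|_{H^\sigma}\bigr),\qquad \sigma<s-\tfrac12,
\end{equation*}
uniformly in $N$, with the random data $u=u^\omega$ distributed according to $\mu_s$. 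Controlling the combinatorics so that the sum $\sum_k R_s^{(k)}$ converges almost surely on the support of $\mu_s$, and so that each multilinear form admits a $\mu_s$--integrable bound via Wiener chaos / hypercontractivity, is the main technical obstacle; I would expect the tree--indexed bookkeeping introduced in earlier normal form papers to be adapted here, with the phase decay $|\Phi|\gtrsim \max|n_j|^3$ of the biharmonic dispersion driving the convergence.

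With the energy estimate in hand, the remaining steps are essentially soft. Set $d\rho_{s,N} = \chi_{\{\|P_Nu\|_{L^2}\le R\}}e^{-R_s^{(N)}(u)}d\mu_s$ (with a mass cutoff removed at the end via the mass conservation law). By the truncated Liouville theorem and the energy estimate,
\begin{equation*}
\tfrac{d}{dt}\rho_{s,N}\bigl(\Phi_N(t)(A)\bigr) \le C\,\rho_{s,N}\bigl(\Phi_N(t)(A)\bigr)^{1-\frac1p}
\end{equation*}
for any $p<\infty$, with $C$ independent of $N$; Gr\"onwall gives a $\rho_{s,N}$--measure bound of $A$ in terms of that of $\Phi_N(-t)(A)$, which in turn translates to a $\mu_s$--bound after controlling the density $e^{-R_s^{(N)}}$ in $L^p(\mu_s)$. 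Passing to the limit $N\to\infty$ using approximation of $\Phi$ by $\Phi_N$ in $H^\sigma$, combined with the zero--one law for tail events, yields $\mu_s(\Phi(-t)(A))=0$ whenever $\mu_s(A)=0$ for every $t\in\R$, i.e.\ quasi--invariance. The whole argument hinges on the infinite normal form iteration in the middle paragraph, so that is where I would expect almost all the analytic difficulty to lie.
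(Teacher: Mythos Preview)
Your outline is essentially the same strategy the paper follows: infinite iteration of normal form reductions on the $H^s$-energy to build a modified energy, truncated Liouville on $E_N$, weighted Gaussian measures with an $L^2$-cutoff, a differential inequality for the measure of transported sets, and passage to the limit $N\to\infty$ via the approximation lemma.

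Two points of execution differ and are worth flagging. First, the paper begins with a preliminary reduction (Subsection~3.1): a gauge transformation $u\mapsto e^{2it\fint|u|^2}u$ followed by passage to the interaction representation $v=S(-t)\tilde u$, which removes the trivially resonant part of the nonlinearity and guarantees $\phi(\bar n)\neq 0$ on the remaining frequency set. You should make this explicit, since without it the first normal form step is not well defined on the full sum $n_1-n_2+n_3=n$. Second, the infinite iteration in the paper yields a \emph{stronger} energy estimate than the one you target: the derivative of the modified energy is bounded purely by $C(\|v\|_{L^2})$, not by $C(\|v\|_{H^\sigma})$ for $\sigma<s-\tfrac12$. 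Combined with the $L^2$-cutoff this gives a constant right-hand side, so the measure evolution obeys a \emph{linear} Gr\"onwall inequality $\tfrac{d}{dt}\rho_{s,N,r,t}(\Psi_N(t)A)\le C(r)\,\rho_{s,N,r,t}(\Psi_N(t)A)$ rather than the Yudovich-type $(1-\tfrac1p)$-power inequality you propose. As a consequence the paper needs neither Wiener chaos/hypercontractivity nor any zero--one law: all multilinear bounds are deterministic in $L^2$, and the final step is just mutual absolute continuity of $\mu_{s,r}$ and $\rho_{s,r,t}$ followed by dominated convergence as $r\to\infty$. Your route would also work, but it is more laborious than necessary given what the infinite normal form actually delivers.
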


Theorem \ref{THM:1} improves the main result 
in \cite{OTz1}, 
where the first and the third authors proved quasi-invariance of $\mu_s$
under \eqref{4NLS0}
for $s > \frac 34$.
Moreover, the regularity $s > \frac 12$ is optimal
since when $ s= \frac 12$, the Gaussian measure $\mu_s$ is
supported in $H^{-\eps}(\T)\setminus L^2(\T)$, while 
the cubic 4NLS \eqref{4NLS0} is ill-posed in negative Sobolev spaces
in the sense of non-existence of solutions.

As shown in \cite{TzBBM}, to prove quasi-invariance of $\mu_s$, it is essential to exhibit 
a smoothing of the nonlinear part of the equation. 
This can be understood at an intuitive level by an analogy to the Cameron-Martin theorem: 
the Gaussian measures $\mu_s$ are quasi-invariant under translations by fixed vectors in their respective Cameron Martin spaces $H^s(\T)$. 
Since a typical element under $\mu_s$ lies in $H^\sigma(\T)$,  $\sigma<s-\frac 12$, one needs to show that the nonlinear part represents a perturbation which is smoother in the Sobolev regularity. 
The Cameron-Martin theorem applies only to translation by fixed vectors, but 
Ramer's quasi-invariance result \cite{RA} applies to a more general nonlinear transformation
on an abstract Wiener space, although it requires the translations to be more regular. 
This was applied in \cite{TzBBM} and~\cite{OTz1}, 
where it was noted that a direct application of Ramer's result yields a suboptimal range on $s$. 
In \cite{OTz1}, 
we applied the normal form reduction
to the equation and exhibited $(1+\eps)$-smoothing on the nonlinearity
when $ s> 1$.
We then 
proved quasi-invariance of $\mu_s$
by invoking Ramer's result.
When $\frac 34< s \leq 1$, 
we followed the general approach 
introduced by the third author  
in the context of the (generalized) BBM equation \cite{TzBBM}.
This strategy combines an energy estimate with the analysis of the evolution of truncated measures. 
As in \cite{OTz1}, showing a smoothing of the nonlinear part for \eqref{4NLS0} requires normal form reductions. The main improvements over \cite{OTz1} here comes from a more refined implementation of the normal form reductions, inspired by \cite{GKO}.

In the following, we first describe a rough idea behind this method introduced in \cite{TzBBM}.
Let $\Phi(t)$ denote the solution map of \eqref{4NLS0}
sending initial data $u_0$ to the solution $u(t)$ at time $t \in \R$.
Suppose that we have a measurable set $A \subset L^2(\T)$ with  $\mu_s(A) = 0$.
Fix non-zero $t \in \R$.
In order to prove quasi-invariance of $\mu_s$,  we would like to prove $\mu_s(\Phi(t)(A)) = 0$.\footnote{By time reversibility,
this would also yield $\Phi(t)_*\mu_s(A) = \mu_s(\Phi(-t) (A)) = 0$.}
The main idea is to establish the following two properties:

\smallskip

\begin{itemize}
\item[(i)] Energy estimate (with smoothing):
\begin{align}
\frac{d}{dt} \|\Phi(t) (u)\|_{H^s}^2
\, \text{``}\leq\text{''} \, C(\| u  \|_{L^2})
\| \Phi(t) u \|_{H^{s - \frac 12 - \eps}}^{2-\theta}
\label{P0}
\end{align}

\noi
for some $\theta > 0$,\footnote{In \cite{OTz2}, 
the first and third authors recently proved quasi-invariance of $\mu_s \otimes \mu_{s-1}$
on $(u, \dt u)$
under the dynamics of the two-dimensional cubic nonlinear wave equation (NLW),
where 
they showed that 
even when $\theta = 0$, 
we can still  apply Yudovich's argument in the limiting case
and establish a desired  estimate of the form \eqref{Yudo}.
This was crucial in proving quasi-invariance of $\mu_s \otimes \mu_{s-1}$
under the cubic NLW on $\T^2$.}

\medskip

\item[(ii)] A change-of-variable formula:
\begin{align*}
\mu_s(\Phi(t)(A)) 
& =  Z_{s}^{-1}\int_{\Phi(t)(A)}
e^{-\frac 12 \|u\|_{H^s}^2} du \
 \text{``}=\text{''} \
Z_s^{-1}\int_{A} 
e^{-\frac 12  \|\Phi(t)(u)\|_{H^s}^2} du. 
\end{align*}

\end{itemize}

Step (i) is an example of local analysis, studying a trajectory of a single solution,
while Step (ii) is an example of global analysis on the phase space. 
Combining (i) and (ii), we 
can study the evolution of $\mu_s(\Phi(t) A)$
by estimating 
$\frac d{dt} \mu_s(\Phi(t) (A))$.
In particular, by applying Yudovich's argument \cite{Y}, we obtain\footnote{Compare \eqref{Yudo}
with a much stronger estimate in Lemma \ref{LEM:meas4}.}
\begin{align}
\mu_s (\Phi(t)(A)) \leq C(t,  \dl) \big(\mu_s ( A)\big)^{1-\delta}
\label{Yudo}
\end{align}

\noi
for any $\dl > 0$.
In particular, if $\mu_s(A) = 0$, then 
we would have $\mu_s(\Phi(t)(A)) = 0$.
See \cite{TzBBM, OTz1, OTzXEDP}
for details.

As the quotation marks indicate, 
both (i) and (ii) are not quite true as they are stated above.
In \cite{OTz1}, 
we first  performed  two transformations to \eqref{4NLS0}
and transformed the equation  into  the following renormalized equation:
\begin{align}
\dt \ft v_n 
& = -i \sum_{\substack{n = n_1 - n_2 + n_3\\n \ne n_1, n_3}} 
e^{-i \phi(\bar n) t} \ft v_{n_1}\cj{\ft v_{n_2}}\ft v_{n_3}
+ i |\ft v_n|^2 \ft v_n,
\label{4NLS1}
\end{align}

\noi
where the phase function $\phi(\bar n) $ is given by 
\begin{align}
\phi(\bar n) = \phi(n_1, n_2, n_3, n) = n_1^4 - n_2^4 + n_3^4 - n^4.
\label{phiX}
\end{align}

\noi
Note that this reduction of \eqref{4NLS0} to \eqref{4NLS1}
via two transformations on the phase space is  another instance of global analysis.
See Subsection \ref{SUBSEC:3.1}.
This reformulation exhibits resonant and non-resonant structure of the nonlinearity 
in an explicit manner
and moreover it removes certain resonant interactions, 
which was crucial in establishing an effective energy estimate in Step (i).
By applying a normal form reduction,
we introduced a {\it modified} energy $E_t = \|u(t)\|_{H^s}^2 + R_t$
for some appropriate correction term $R_t$. 
See \eqref{P1} - \eqref{P3} below.
We then established 
an energy estimate on the modified energy $E_t$, provided $s > \frac 34$.
In Step (ii), 
in order to justify such a change-of-variable formula, 
we considered a truncated dynamics.
Moreover, we needed to 
introduce and consider 
a change-of-variable formula for a {\it modified} measure associated with the modified energy
$E_t$ introduced in Step (i).

The regularity restriction $s > \frac 34$ in the previous paper \cite{OTz1}
comes from the energy estimate in Step (i), where we applied the normal form reduction
(namely integration by parts in time) once to the equation:
$\dt \| u \|_{H^s}^2 = \cdots$
satisfied by the $H^s$-energy functional $\| u \|_{H^s}^2$.
In the following, we  prove Theorem \ref{THM:1}
by performing normal form reductions {\it infinitely many times}.
Our  normal form approach is 
analogous to the approach employed in 
\cite{BIT, KO, GKO}.
In particular, in \cite{GKO}, the first author (with Guo and Kwon) implemented an infinite iteration 
scheme of normal form reductions to 
prove unconditional well-posedness 
of  the cubic NLS on $\T$ in low regularity.
In \cite{GKO},  we performed integration by parts in a successive manner, 
introducing nonlinear terms of higher and higher degrees.
While the nonlinear terms thus introduced are of higher degrees, 
they satisfy better estimates.
In order to keep track of all possible ways to perform integration by parts, 
we introduced the notion of ordered trees.
See also \cite{CGKO} for another example of an infinite iteration
of normal form reductions to prove unconditional well-posedness.

In establishing an improved energy estimate (Proposition \ref{PROP:energy}), 
we perform an infinite iteration of normal form reductions.
It is worthwhile to note that,
unlike \cite{GKO},  we do not work at the level of the equation \eqref{4NLS0}.
Instead, we work 
at the level of the evolution equation 
$\dt \| u \|_{H^s}^2 = \cdots$ satisfied by the $H^s$-energy functional.
Let us first go over the computation performed in \cite{OTz1}
to show a flavor of this method.
Using \eqref{4NLS1}, we have
\begin{align}
 \frac{d}{dt} \bigg(\frac 12\| u (t) \|_{H^s}^2\bigg)
& =  \frac{d}{dt} \bigg(\frac 12\|v(t) \|_{H^s}^2\bigg) \notag\\
& = - \Re i 
\sum_{n \in \Z} 
\sum_{\substack{n = n_1 - n_2 + n_3\\n \ne n_1, n_3}} 
e^{ - i \phi(\bar n) t} \jb{n}^{2s} \ft v_{n_1} \cj{\ft v_{n_2}} \ft v_{n_3} \cj{\ft v_n},
\label{P1}
\end{align}

\noi
where $v$ is the renormalized variable as in \eqref{4NLS1}.
Then, differentiating by parts, 
i.e.~integrating by parts without an integral symbol,\footnote{This is indeed  
a Poincar\'e-Dulac normal form reduction 
 applied to the evolution equation \eqref{P1} for $\frac 12 \|v(t)\|_{H^s}^2$.
See  Section 1 in \cite{GKO}
for a discussion on  the relation between differentiation by parts and normal form reductions.}
we obtain
\begin{align}
\frac{d}{dt} \bigg(\frac 12\|v(t) \|_{H^s}^2\bigg)
& = \Re \frac{d}{dt} \bigg[
\sum_{n \in \Z}\sum_{\substack{n = n_1 - n_2 + n_3\\n \ne n_1, n_3}} 
\frac{e^{ - i \phi(\bar n) t}}{ \phi(\bar n)} \jb{n}^{2s} \ft v_{n_1} \cj{\ft v_{n_2}} \ft v_{n_3} \cj{\ft v_n}\bigg]\notag \\
& \hphantom{X}
- \Re 
\sum_{n \in \Z}\sum_{\substack{n = n_1 - n_2 + n_3\\n \ne n_1, n_3}} 
\frac{e^{ - i \phi(\bar n) t}}{ \phi(\bar n)} \jb{n}^{2s} 
\dt \big(\ft v_{n_1} \cj{\ft v_{n_2}} \ft v_{n_3} \cj{\ft v_n}\big).
\label{P2}
\end{align}

\noi
This motivates us to define the first modified  energy $E_t^{(1)}(v)$ 
with the correction term $R_t^{(1)}(v)$ by
\begin{align}
E^{(1)}_t(v)  
 = & \, \frac 12 \|v\|_{H^s}^2  + R_t^{(1)}(v)\notag\\
  : = & \, \frac 12\|v\|_{H^s}^2 
- \Re 
\sum_{n \in \Z} \sum_{\substack{n = n_1 - n_2 + n_3\\n \ne n_1, n_3}}
\frac{e^{ - i \phi(\bar n) t}}{ \phi(\bar n)} \jb{n}^{2s} \ft v_{n_1} \cj{\ft v_{n_2}} \ft v_{n_3} \cj{\ft v_n} .
\label{P3}
\end{align}

\noi
This is  the modified energy used in the previous work \cite{OTz1}
(up to a constant factor).
Note that the time derivative of $E_t^{(1)}(v)$ is given by 
the second term
on the right-hand side of~\eqref{P2}.

In the second step, we divide the 
the second term
on the right-hand side of \eqref{P2}
into nearly resonant and non-resonant parts
and apply differentiation by parts only to the non-resonant part.
When we apply differentiation by parts as in \eqref{P2} in an iterative manner, 
 the time derivative may fall on any of the factors $\ft v_{n_j}$ and $\ft v_n$,
 generating higher order nonlinear terms.
In general, the structure of such terms can be very complicated, depending on where the time derivative falls.
In \cite{GKO}, ordered (ternary) trees played an important role for indexing such terms.
In our case, we work on the evolution equation satisfied
by the $H^s$-energy functional 
and we need to consider tree-like structures that grow in two directions.
In Section \ref{SEC:NF},  we introduce the notion of  bi-trees and ordered bi-trees
for this purpose.

After $J$ steps of the normal form reductions, we arrive at
\begin{align} 
\frac {d}{dt} \bigg(\frac 12 \| v (t) \|_{H^s}^2\bigg)
=   \frac {d}{dt}\bigg(   \sum_{j = 2}^{J+1}  \NN^{(j)}_0(v) & (t)\bigg)
  +\sum_{j = 2}^{J+1} \NN^{(j)}_1(v)(t) \notag\\
&  +  \sum_{j = 2}^{J+1} \RR^{(j)}(v)(t)
 + \NN^{(J+1)}_2(v)(t).
\label{P4a}
\end{align}

\noi
Here, $ \NN^{(j)}_0(v)$ consists of  certain $2j$-linear terms,
while 
 $ \NN^{(j)}_1(v)$ and $\RR^{(j)}(v)$  consist of $(2j+2)$-linear terms.
In practice, we obtain \eqref{P4a} for smooth functions
with a truncation parameter $N \in \N$.
Here,  we can only show that the remainder term $\NN^{(J+1)}_2$ satisfies the bound of the form:
\[ \big|\NN^{(J+1)}_2\big|\leq F(N, J),\]

\noi
with  the upper bound $F(N, J)$  satisfying
\[\lim_{N \to \infty} F(N, J) = \infty\] 

\noi
for each fixed $J \in \N$.
This, however, does not cause an issue since we also show that 
\[\lim_{J \to \infty} F(N, J) = 0\] 

\noi
for each {\it fixed} $N \in \N$.
Therefore, by first taking  the limit  $J \to \infty$  and then $N \to \infty$, 
we conclude that the error term $\NN^{(J+1)}_2$ vanishes in the limit.
See Subsection \ref{SUBSEC:error}.
While it is simple, this observation is crucial in an infinite iteration of the normal form reductions.

At the end of an infinite iteration of the normal form reductions,
we can rewrite \eqref{P1} as 
\begin{align} 
\frac {d}{dt} \bigg(\frac 12 \| v (t) \|_{H^s}^2\bigg)
=  \frac {d}{dt}\bigg( \sum_{j = 2}^\infty \NN^{(j)}_0(v)(t)\bigg)
+ \sum_{j = 2}^\infty \NN^{(j)}_1(v)(t) + \sum_{j = 2}^\infty \RR^{(j)}(v)(t)  , 
\label{P4}
\end{align}

\noi
involving infinite series.
The main point of this normal form approach 
is that, while 
the degrees of the nonlinear terms appearing in \eqref{P4} can be arbitrarily large,
we can show that they are all bounded in $L^2(\T)$ (in a summable manner over $j$).
In particular, by 
defining the modified energy $\EE_t(v)$
by 
\begin{align}
\EE_t(v) := \frac 12 \| v(t) \|_{H^s}^2- \sum_{j = 2}^\infty \NN^{(j)}_0(v)(t), 
\label{P5}
\end{align}

\noi
we see that its time derivative is bounded:
\begin{align*}
\bigg|\frac{d}{dt}\EE_t(v)\bigg| \leq C_s(\|u\|_{L^2}), 
\end{align*}

\noi
satisfying the energy estimate \eqref{P0} in Step (i) with $\theta = 2$.
See Proposition \ref{PROP:energy} below.
This is the main new ingredient for  proving Theorem \ref{THM:1}.
See also the recent work \cite{OW} by the first author (with Y.\,Wang)
on an infinite iteration of normal form reductions
for establishing a crucial energy estimate 
on the difference of two solutions in proving enhanced uniqueness 
for the renormalized cubic 4NLS (see \eqref{4NLS0a} below) in negative Sobolev spaces.

\begin{remark}\rm 
(i) Heuristically speaking, 
this infinite iteration of normal form reductions
allows us to  exchange analytical difficulty
with algebraic/combinatorial  difficulty.

\smallskip

\noi
(ii) The ``correction term'' $R_t^{(1)}$ in \eqref{P3}
is nothing but the correction term in the spirit of the $I$-method \cite{CKSTT1, CKSTT2}.
In fact, at each step  of  normal form reductions,  we obtain a correction term $\NN^{(j)}_0(v)$.
Hence, our improved energy estimate (Proposition \ref{PROP:energy})
via an infinite iteration of normal form reductions can be basically viewed
as an implementation of the $I$-method with 
an  infinite sequence $\big\{\NN^{(j)}_0(v)\big\}_{j = 2}^\infty$
of  correction terms. 
Namely, the modified energy $\EE_t(v)$ defined in \eqref{P5}
is a modified energy of an infinite order in the $I$-method terminology.\footnote{The highest order of modified energies
used in the literature  is three in the application of the $I$-method to the KdV equation \cite{CKSTT2}, corresponding 
to two iterations of normal form reductions.}

\smallskip

\noi
(iii) We point out that a  finite iteration of normal form reductions is not sufficient
to go below $s > \frac 34$.
See (6.14) in \cite{OTz1}, showing the restriction $s - \frac 12 > \frac 14$.

\end{remark}

\begin{remark}\rm 
Let us briefly discuss the situation for the more general cubic fourth order NLS \eqref{4NLS}.
For this equation, 
the following phase function
\begin{align*}
 	\phi_{\ld, \mu}(\bar n) 
	=  -\ld(n_1^2 - n_2^2 + n_3^2 - n^2)
	+ \mu(n_1^4 - n_2^4 + n_3^4 - n^4)
\end{align*}

\noi
plays an important role in the analysis.
In view of Lemma \ref{LEM:phase} below, 
we have
\begin{align}
\phi_{\ld, \mu}(\bar n) =  (n_1 - n_2)(n_1-n) 
\big\{- 2\ld + \mu \big(n_1^2 +n_2^2 +n_3^2 +n^2 + 2(n_1 +n_3)^2\big)\big\}.
 \label{phase1}
\end{align}

If the last factor in \eqref{phase1}
does not vanish for any $n_1, n_2, n_3, n \in \Z$, 
then we can establish quasi-invariance of $\mu_s$
under \eqref{4NLS} for $s > \frac 12$
with the same proof as in \cite{OTz1} and this paper.
It suffices to note that,  
while we make use of the divisor counting argument in the proof, 
we only apply it to 
$\mu(\bar n) =  (n_1 - n_2)(n_1-n) $
and thus the integer/non-integer character of the last factor in \eqref{phase1} is irrelevant.

For example, 
when $\ld \mu < 0$, the last factor in \eqref{phase1} does not vanish
and thus Theorem \ref{THM:1} applies to this case.
When $\ld \mu > 0$, 
the non-resonant condition 
$ 2\ld \not\in \mu \N$ 
also guarantees the non-vanishing of the last factor in \eqref{phase1}.
It seems of interest to investigate the transport property of the Gaussian measure $\mu_s$
in the resonant case $ 2\ld \in \mu \N$.
In this case, there are more resonant terms
and thus further analysis is required.

\end{remark}

\begin{remark}\rm

On the one hand, the cubic 4NLS \eqref{4NLS0} is ill-posed in negative Sobolev spaces
and hence the quasi-invariance result stated in Theorem \ref{THM:1} is sharp.
On the other hand, the first author and Y.\,Wang \cite{OW} considered the following renormalized
cubic 4NLS on $\T$:
\begin{align}
\textstyle i \dt u = \dx^4 u  + \big( |u|^2 -2  \fint |u|^2 dx\big) u,
\label{4NLS0a}
\end{align}

\noi
 where $\fint_\T f(x) dx := \frac{1}{2\pi} \int_\T f(x)dx$.
In particular, they proved global well-posedness
of \eqref{4NLS0a} in $H^s(\T)$ for $s > -\frac{1}{3}$.
In a very recent work \cite{OTW}, the first and third authors with Y.\,Wang
went further and constructed global-in-time dynamics
for \eqref{4NLS0a} 
almost surely with respect to   the white noise, i.e.~the Gaussian measure $\mu_s$ with $s = 0$
supported on $H^\s(\T)$, $\s < -\frac 12$.
As a result, they proved invariance of the white noise $\mu_0$
under the renormalized cubic 4NLS~\eqref{4NLS0a}.
 Invariance is of course a stronger property than quasi-invariance
 and hence the white noise is in particular quasi-invariant
 under~\eqref{4NLS0a}.
 The question of quasi-invariance of $\mu_s$ for $s \in (0, \frac 12]$ under the dynamics
 of the renormalized cubic 4NLS \eqref{4NLS0a} is therefore 
 a natural sequel of the analysis of this paper.

 \end{remark}

\subsection{Non quasi-invariance  under the dispersionless model}

To motivate our second result, note that, by invariance of the complex-valued Gaussian random variable
$g_n$ in~\eqref{gauss1} under rotations, it is clear that 
the Gaussian measure $\mu_s$ is  invariant under the linear dynamics:
\begin{equation}
\begin{cases}
i \partial_t u = \partial_x^4 u\\
u|_{t=0}= u_0,
\end{cases}  \qquad (x,t)\in \mathbb{T}\times \mathbb{R}.
\label{linear}
\end{equation}

\noi
See Lemma \ref{LEM:gauss} (i) below.
In particular,  $\mu_s$ is quasi-invariant under the linear dynamics~\eqref{linear}. 

In the proof of the quasi-invariance of $\mu_s$ under the cubic 4NLS \eqref{4NLS0} (Theorem \ref{THM:1} above), 
the dispersion plays an essential role.
The strong dispersion allows us to show that the nonlinear part in \eqref{4NLS0}
is a perturbation to the linear equation \eqref{linear}.
Our next result shows that the dispersion is indeed {\it essential} for Theorem \ref{THM:1} to hold.

Consider the following dispersionless model:
\begin{equation}\label{ND1}
\begin{cases}
i \partial_t u = |u|^2u\\
u|_{t=0}= u_0,
\end{cases}
  \qquad (x,t)\in \mathbb{T}\times \mathbb{R}.
\end{equation}

\noi
Recall that there is an explicit solution formula for \eqref{ND1} given by:
\begin{equation}\label{formula}
u(x,t)=e^{- it|u_0(x)|^2}u_0(x)
\end{equation}

\noi
at least for continuous initial data
such that the pointwise product makes sense.

Let $s > \frac 12$.
Then, it is easy to see that the random function $u^\o$ in \eqref{gauss1} is continuous almost surely.  
Indeed, by the equivalence of Gaussian moments
and the mean value theorem, we have
\begin{align*}
\E\big[|u^\o(x) - u^\o(y)|^p\big] 
& \leq C_p 
\Big(\E\big[|u^\o(x) - u^\o(y)|^2\big]\Big)^\frac{p}{2}
\sim \bigg(\sum_{n \in \Z}\frac{1}{\jb{n}^{2s - 2\eps}}\bigg)^\frac{p}{2}
|x - y|^{\eps p}\\
& \les |x - y|^{\eps p }, 
\end{align*}

\noi
provided that $\eps > 0$ is sufficiently small such that $2s - 2\eps > 1$.
Now, by choosing $p \gg1 $ such that $\eps  p > 1$, 
we can apply Kolmogorov's continuity criterion
and conclude that $u^\o$ in~\eqref{gauss1} is almost surely continuous when $s > \frac 12$.
This in particular implies that 
 the solution formula  \eqref{formula} is well defined for initial data distributed according to $\mu_s$, $s > \frac 12$, 
 and the corresponding solutions exist globally in time.
We denote by  $\wt \Phi(t)$ 
the solution map for the dispersionless model \eqref{ND1}.

We now state our second result.

\begin{theorem}\label{THM:sing}
Let $s > \frac 12$.
Then, 
given $t \ne 0$, 
the pushforward measure $\wt \Phi(t)_* \mu_s$
under the dynamics of the dispersionless model \eqref{ND1}
is not absolutely continuous with respect to the Gaussian measure $\mu_s$. 
Namely, the Gaussian measure $\mu_s$ is not quasi-invariant
under the dispersionless dynamics \eqref{ND1}.
\end{theorem}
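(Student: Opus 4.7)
The plan is to exhibit a measurable functional of $u$ whose almost-sure value under $\mu_s$ is different from its typical value under $\wt\Phi(t)_*\mu_s$. The natural candidate is a statistic of the high-frequency spectral density, since under $\mu_s$ the Fourier modes $\ft u_n$ are independent complex Gaussians with $\E[|\ft u_n|^2] = 2/\jb{n}^{2s}$, whereas under the pushforward the pointwise phase dressing $u_0 \mapsto e^{-it|u_0|^2}u_0$ destroys independence and alters the asymptotic spectral density by a strictly positive amount.

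Using translation invariance of $\mu_s$ and of the dispersionless flow, one checks that $\E_{\wt\Phi(t)_*\mu_s}[\ft u_m \cj{\ft u_n}] = 0$ for $m \neq n$ and
\begin{align*}
\E_{\wt\Phi(t)_*\mu_s}\bigl[|\ft u_n|^2\bigr] = \ft G_t(n), \qquad
G_t(z) := \E_{\mu_s}\bigl[u_0(0)\cj{u_0(-z)}\, e^{-it(|u_0(0)|^2 - |u_0(-z)|^2)}\bigr].
\end{align*}
Expanding $e^{-it(\cdots)}$ in powers of $t$ and applying Wick's theorem with $p := 2K_s(0)$, $q := 2K_s(z)$ and $K_s(z) = \sum_n \jb{n}^{-2s}e^{inz}$, the $O(t)$ contribution vanishes by the $V_1 \leftrightarrow V_2$ symmetry, while the $O(t^2)$ contribution computes to $-2t^2 q(p^2 - q^2)$. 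Since the singular behavior $K_s(0) - K_s(z) \sim C_s|z|^{2s-1}$ near $z = 0$ (with $C_s > 0$) is precisely what produces the $\jb{n}^{-2s}$ Fourier decay, one obtains
\begin{align*}
G_t(z) - G_t(0) \sim -2C_s|z|^{2s-1}\bigl(1 + 16K_s(0)^2\, t^2\bigr) + O(t^4), \qquad z \to 0,
\end{align*}
and hence $c(t) := \lim_{|n|\to\infty} \jb{n}^{2s}\,\E_{\wt\Phi(t)_*\mu_s}[|\ft u_n|^2] = 2\bigl(1 + 16K_s(0)^2 t^2\bigr) + O(t^4)$, strictly greater than $c(0) = 2$ for small $t \neq 0$.

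Now define the Ces\`aro averages
$F_N(u) := \frac{1}{N}\sum_{n=N+1}^{2N}\jb{n}^{2s}|\ft u_n|^2$.
Under $\mu_s$ the summands $|g_n|^2$ are i.i.d.\ with mean $2$, so Kolmogorov's SLLN yields $F_N \to 2$ $\mu_s$-almost surely. Under $\wt\Phi(t)_*\mu_s$ we have $\E[F_N] \to c(t) \neq 2$, and a variance bound $\mathrm{Var}(F_N) = o(1)$, obtained by Wick-expanding the $4$-point function
$\E_{\mu_s}[u_0(x_1)\cj{u_0(x_2)}u_0(x_3)\cj{u_0(x_4)}\,e^{-it\sum_j \pm |u_0(x_j)|^2}]$
and tracking the off-diagonal covariance decay in $|n-m|$, gives convergence in probability $F_N \to c(t)$ under the pushforward. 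Passing to an almost-surely convergent subsequence $F_{N_k}$, the event $\{F_{N_k} \to c(t)\}$ has $\wt\Phi(t)_*\mu_s$-measure $1$ and $\mu_s$-measure $0$, which precludes $\wt\Phi(t)_*\mu_s \ll \mu_s$.

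The principal technical obstacle is the $4$-point variance bound, whose rigorous proof requires precise tracking of the off-diagonal correlations introduced by the exponential factors $e^{\pm it|u_0|^2}$, showing they decay rapidly enough in $|n - m|$ to beat the diagonal growth. For values of $t$ outside the perturbative range, one invokes real-analyticity of $c(t)$ (a consequence of the exponential integrability of $|u_0(x)|^2$) together with $c''(0) > 0$ to confine $\{t : c(t) = 2\}$ to an isolated subset of $\R$, and at any such exceptional points one appeals to a higher-moment analogue of $F_N$, which necessarily distinguishes the measures since $\wt\Phi(t)_*\mu_s$ is non-Gaussian for every $t \neq 0$.
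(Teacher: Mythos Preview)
Your approach is genuinely different from the paper's. The paper works pathwise: under $\mu_s$, the sample paths (or their $\lfloor s-\tfrac12\rfloor$-th derivatives) obey a law of the iterated logarithm giving an almost surely constant local modulus of continuity at each point, and a direct computation with the explicit formula \eqref{formula} shows this modulus is violated with positive probability after the map $u_0\mapsto e^{-it|u_0|^2}u_0$. That argument is local in $x$, requires no perturbation in $t$, and handles every $t\ne 0$ and every $s>\tfrac12$ at once (with separate but parallel treatments for $s\in\tfrac12+\N$). Your spectral approach is global: you compute the high-frequency second-moment asymptotic $c(t)=\lim_{|n|\to\infty}\jb{n}^{2s}\E_{\wt\Phi(t)_*\mu_s}[|\ft u_n|^2]$ and build a law-of-large-numbers statistic $F_N$ that converges to $2$ under $\mu_s$ and to $c(t)\ne 2$ under the pushforward. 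The Wick computation of the $O(t^2)$ term is correct, and if the programme could be completed it would in fact yield mutual singularity, which is stronger than what the theorem asserts.

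As written, however, several steps are not actually carried out. The variance bound $\mathrm{Var}_{\wt\Phi(t)_*\mu_s}(F_N)=o(1)$ is explicitly flagged as ``the principal technical obstacle'' and left unproved; without it you only have $\E[F_N]\to c(t)$, and no distinguishing event is established. The treatment of non-perturbative $t$ is also incomplete: real-analyticity of $c(t)$ may confine $\{t:c(t)=2\}$ to a discrete set, but the appeal to ``a higher-moment analogue of $F_N$'' at those exceptional points is not an argument --- you would have to name the statistic and redo both the mean asymptotic and the variance bound for it, and ``$\wt\Phi(t)_*\mu_s$ is non-Gaussian'' does not by itself produce such a statistic. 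Finally, your extraction of $c(t)$ rests on the asymptotic $K_s(0)-K_s(z)\sim C_s|z|^{2s-1}$, which is the \emph{leading} behaviour only for $\tfrac12<s<\tfrac32$; at $s=\tfrac32$ there is a logarithmic correction and for $s>\tfrac32$ the leading term of $K_s(0)-K_s(z)$ is the smooth $z^2$ contribution, so isolating the $|n|^{-2s}$ Fourier tail of $G_t$ requires tracking the subleading singular part of $G_t(z)$, which you have not done. The paper's LIL argument sidesteps all three issues.
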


This is a sharp contrast with the quasi-invariance result
for the cubic 4NLS in Theorem~\ref{THM:1}
and for the cubic NLS
for $s \in \N$ (see Remark 1.4 in \cite{OTz1}).
In particular, Theorem~\ref{THM:sing} shows that dispersion is essential
for establishing quasi-invariance of $\mu_s$.

We prove this negative result in Theorem \ref{THM:sing} 
by establishing 
that typical elements under $\mu_s$, $s>\frac 12 $, possess an almost surely constant modulus of continuity at each point. This is the analogue of the classical law of the iterated logarithm for the Brownian motion. 
We show that  this modulus of continuity is 
destroyed with a positive probability by the nonlinear transformation \eqref{formula}
for any non-zero time $t \in \R \setminus \{0\}$.

Our proof is based on  three basic tools:
the Fourier series representation of the (fractional)
Brownian loops,  the  law of the iterated logarithm,
and the solution formula \eqref{formula} to the dispersionless model \eqref{ND1}.
We will use three different versions of the law of the iterated logarithm,
depending on 
(i) $s = 1$ corresponding to the Brownian/Ornstein-Uhlenbeck loop,
(ii) $\frac 12 < s < \frac 32$, corresponding to the fractional Brownian loop, 
and (iii) the critical case $s = \frac 32$.
In Cases (i) and (ii), we make use of the mutual absolute continuity
of the function $u$ given in the random Fourier series \eqref{gauss1}
and the (fractional) Brownian motion on $[0, 2\pi)$
to  deduce  the  law of the iterated logarithm for the random function $u$.
In Case~(iii), we directly establish the relevant 
law of the iterated logarithm for $u$ in \eqref{gauss1}.\footnote{We could apply 
this argument to directly establish the relevant law of the iterated logarithm 
in Cases~(i) and (ii) as well.}  See Proposition~\ref{PROP:LL}.

On the one hand, 
the law of the iterated logarithm
yields 
almost sure constancy of the  modulus of continuity  at time $t = 0$.
On the other hand, 
we combine this almost sure constancy of the  modulus of continuity at time $t = 0$
and the solution formula \eqref{formula}
to show that the modulus of continuity at non-zero time $t\ne 0$
does not satisfy the conclusion of the law of the iterated logarithm
with a positive probability.
Lastly, for $s > \frac 32$, we reduce the proof to  one of Cases (i), (ii), or (iii)
by differentiating the random function.

\begin{remark}\rm

The existence of a quasi-invariant measure
shows a delicate persistence
property of the dynamics.
In particular,  this persistence property due to the quasi-invariance
is stronger than the  persistence of regularity\footnote{
In the scaling sub-critical case, 
by persistence of regularity, we  mean the following;
if one proves local well-posedness in $H^{s_0}$ for some $s_0 \in \R$
and if $u_0$ lies in a smoother space $H^s$ for some $s > s_0$, 
then the corresponding solution remains smoother
and lies  in $C([-T, T] ;H^s)$,
where 
the local existence time $T>0$ depends only on the $H^{s_0}$-norm of the initial condition $u_0$.
}
obtained by the usual well-posedness theory.
While the dispersionless model \eqref{ND1}
enjoys the persistence of regularity in $H^\s(\T)$, $\s > \frac 12$, 
Theorem \ref{THM:sing} shows that the Gaussian measure $\mu_s$ 
is not quasi-invariant 
under the dynamics of \eqref{ND1}.

\end{remark}

\begin{remark}\label{REM:eps}\rm
(i) 
For $\eps \in \R$, 
consider the  following 4NLS:
\begin{equation}
i \dt u = \eps \dx^4 u + |u|^2u.
\label{eNLS}
\end{equation}

\noi
For smooth initial data, 
it is easy to show that,
on the unit time interval $[0, 1]$,  
the corresponding solutions to the small dispersion 4NLS,
i.e.~\eqref{eNLS} with small $\eps \ne0$, 
converge to those to the dispersionless model~\eqref{ND1}
as $\eps \to 0$.
See Lemma 4.1 in \cite{OW0}.
In this sense, 
the small dispersion 4NLS~\eqref{eNLS} with $|\eps|\ll 1$
is ``close'' to the dispersionless model~\eqref{ND1}.

On the other hand, there is a dichotomy 
in the statistical behavior of solutions
to the small dispersion 4NLS \eqref{eNLS} 
and  to the dispersionless model \eqref{ND1}.
When $\eps \ne 0$, one can easily adapt the proof of Theorem \ref{THM:1}
and prove quasi-invariance of  the Gaussian measure $\mu_s$.
When $\eps = 0$, however, Theorem  \ref{THM:sing} shows that $\mu_s$ is not quasi-invariant under
\eqref{ND1}.
This shows 
 a dichotomy between quasi-invariance for $\eps \ne 0$
and non quasi-invariance for $\eps = 0$, 
while there is a good approximation property
for the deterministic dynamics of the dispersionless model \eqref{ND1}
by that of the small dispersion 4NLS \eqref{eNLS} with $|\eps|\ll1$.

\smallskip

\noi
(ii) We mention recent work 
\cite{OTsTz, FT}
on establishing quasi-invariance of the Gaussian measure $\mu_s$
for Schr\"odinger-type equations with less dispersion.
In particular, Forlano-Trenberth \cite{FT} studied
the following fractional NLS:
\begin{equation}
i \dt u = (-\dx^2)^\frac{\al}{2} u + |u|^2u
\label{fNLS}
\end{equation}

\noi
and proved quasi-invariance of $\mu_s$ (for some non-optimal range of $s >s_\al$),
provided that $\al > 1$.
When $\al = 1$, 
the equation \eqref{fNLS} corresponds to the half-wave equation, 
which  does not possess any dispersion.
See \cite{GG, Poc, GLPR}.
It would be of interest to study the transport properties
of the Gaussian measure $\mu_s$ under \eqref{fNLS}
for $0 < \al < 1$.
We also point out  that 
 \eqref{fNLS}
for $0 < \al < 1$ also appears as a model in the study of one-dimensional wave turbulence
\cite{MMT}
and hence is a natural model for statistical study of its solutions.

\end{remark}

\subsection{Organization of the paper}

In Section~\ref{SEC:notations}, we introduce some notations.
In Section~\ref{SEC:3}, we prove Theorem \ref{THM:1},
assuming the improved energy estimate (Proposition \ref{PROP:energy}).
We then present the proof of  Proposition \ref{PROP:energy}
in Section \ref{SEC:NF}
by implementing an infinite iteration of normal form reductions.
Lastly, by studying the random Fourier series \eqref{gauss1} and the relevant law
of the iterated logarithm, 
we prove 
non quasi-invariance of 
the Gaussian measure $\mu_s$ 
under the dispersionless model (Theorem \ref{THM:sing}) in Section \ref{SEC:sing}.

\section{Notations}
\label{SEC:notations}

Given $N \in \N$, 
we use $\P_{\leq N}$ to denote the Dirichlet projection 
onto the frequencies $\{|n|\leq N\}$
and  set $\P_{> N} := \text{Id} - \P_{\leq N}$.
When $N = \infty$, it is understood that $P_{\le N} = \text{Id}$.
Define $E_N$ and $E_N^\perp$ by 
\begin{align*}
E_N & = \P_{\leq N} L^2(\T) = \text{span}\{e^{inx}: |n|\leq N\},\\
E_N^\perp & = \P_{>N} L^2(\T) = 
\text{span}\{e^{inx}: |n|> N\}.
\end{align*}

Given $ s> \frac{1}{2}$, let $\mu_s$ be the  Gaussian measure on $L^2(\T)$
defined in \eqref{gauss0}.
Then, we can write $\mu_s$ as
\begin{align*}
 \mu_s = \mu_{s, N}\otimes \mu_{s, N}^\perp,
 \end{align*}

\noi
where $ \mu_{s, N}$ and $\mu_{s, N}^\perp$ 
are the 
marginal distributions of $\mu_s$ restricted onto $E_N$ and $E_N^\perp$, respectively.
In other words, 
$ \mu_{s, N}$ and $\mu_{s, N}^\perp$ 
are 
the induced probability measures
under the following maps: 
\begin{align*}
& u_N :\o \in \O \longmapsto  u_N (x; \o) = \sum_{|n|\leq N} \frac{g_n(\o)}{\jb{n}^s}e^{inx}, \\
& u_N^\perp: \o \in \O \longmapsto u_N^\perp(x; \o) = \sum_{|n|>N} \frac{g_n(\o)}{\jb{n}^s}e^{inx}, 
 \end{align*}

\noi
 respectively.
Formally, we can write  $ \mu_{s, N}$ and $\mu_{s, N}^\perp$
as
\begin{align}
 d \mu_{s, N} = Z_{s, N}^{-1} e^{-\frac 12 \| \P_{\leq N}  u_N\|_{H^s}^2} d u_N 
\quad \text{and} \quad  
d \mu_{s, N}^\perp = \ft Z_{s,N }^{-1} e^{-\frac 12 \| \P_{>N} u_N^\perp \|_{H^s}^2} d u_N^\perp. 
\label{G4}
 \end{align}

\noi
Given $r > 0$, we also define a probability measure $\mu_{s, r}$ 
with an $L^2$-cutoff
by 
\begin{align*}
d \mu_{s, r} = Z_{s, r}^{-1}\ind_{\{ \| v\|_{L^2 } \leq r\}} d\mu_s.
\end{align*}

Given a function $v \in L^2(\T)$, 
we simply use  $v_n$ to denote the Fourier coefficient $\ft v_n$ of $v$,
when there is no confusion.
This shorthand notation is especially useful in Section \ref{SEC:NF}.

We  use $a+$ (and $a-$) to denote $a + \eps$ (and $a - \eps$, respectively) for arbitrarily small $\eps \ll 1$,
where an implicit constant is allowed to depend on $\eps > 0$
(and it usually diverges as $\eps \to 0$).
Given $x \in \R$, 
we use 
   $\lfloor x \rfloor $ to denote the integer part
   of $x$.

In view of the time reversibility of the equations \eqref{4NLS0} and \eqref{ND1}, 
we only consider positive times in the following.

\section{Proof of Theorem \ref{THM:1}:
Quasi-invariance of $\mu_s$ under the cubic 4NLS}
\label{SEC:3}

In this section, we present  the proof of Theorem \ref{THM:1}.
The main new ingredient is the improved energy estimate (Proposition \ref{PROP:energy})
whose proof is postponed to Section \ref{SEC:NF}.
The remaining part of the proof follows closely
 the presentation in \cite{OTz1}
and thus we keep our discussion concise.

\subsection{Basic reduction of the problem}
\label{SUBSEC:3.1}

We first go over the basic reduction of the problem from \cite{OTz1}.
Given $t \in \R$, we define a gauge transformation $\GG_t$ on $L^2(\T)$
by setting
\begin{align*}
 \mathcal{G}_t [f ]: = e^{ 2 i t \fint |f|^2} f.
\end{align*}

\noi
Given a function $u \in C(\R; L^2(\T))$, 
we define $\GG$ by setting
\[\GG[u](t) : = \GG_t[u(t)].\]

\noi
Note that $\GG$ is invertible
and its inverse is given by $\GG^{-1}[u](t) = \GG_{-t}[u(t)]$.

Given a solution  $u \in C(\R; L^2(\T))$  to \eqref{4NLS0}, 
let  $\wt u = \GG[u]$.
Then, it follows from  the mass conservation 
that $\wt u$ is a solution to the following renormalized fourth order NLS:
\begin{align}
\textstyle i \dt \wt u  =   \dx^4 \wt u  +\big( |\wt u |^{2}  - 2 \fint_\T |\wt u |^2dx \big) \wt u. 
\label{NLS4}
\end{align}

\noi
This is precisely the renormalized cubic 4NLS in \eqref{4NLS0a}.

Let $S(t) = e^{-it \dx^4}$ be the solution operator for the linear fourth order Schr\"odinger equation~\eqref{linear}.
Denoting by $v= S(-t) \wt u$ the interaction representation of  $\wt u$, 
we see that $v$ satisfies 
the following equation for $\{v_n\}_{n \in \Z}$: 
\begin{align}
\dt v_n 
& = -i \sum_{\G(n)} e^{-i \phi(\bar n) t} v_{n_1}\cj{v_{n_2}}v_{n_3}
+ i |v_n|^2 v_n \notag\\
& =: \NN(v)_n + \RR(v)_n, 
\label{NLS5}
\end{align}

\noi
where the phase function $\phi(\bar n)$ is as in \eqref{phiX}
and the plane $\G(n)$ is  given by
\begin{align}
\G(n) 
= \{(n_1, n_2, n_3) \in \Z^3:\, 
 n = n_1 - n_2 + n_3 \text{ and }  n_1, n_3 \ne n\}.
 \label{Gam1}
 \end{align}

Recall that the phase function $\phi(\bar n)$ admits
the following factorization.
See \cite{OTz1} for the proof.
	
\begin{lemma}\label{LEM:phase}
Let $n = n_1 - n_2 + n_3$.
Then, we have
\begin{align*}
\phi(\bar n) =  (n_1 - n_2)(n_1-n) 
\big( n_1^2 +n_2^2 +n_3^2 +n^2 + 2(n_1 +n_3)^2\big).
\end{align*}
	
\end{lemma}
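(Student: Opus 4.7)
The plan is to prove this by direct algebraic manipulation, exploiting the constraint $n = n_1 - n_2 + n_3$ twice to extract the two linear factors $(n_1 - n)$ and $(n_1 - n_2)$ in turn, leaving the stated symmetric polynomial.

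First, I would pair the terms of $\phi(\bar n) = n_1^4 - n_2^4 + n_3^4 - n^4$ so that the pairings expose $(n_1 - n)$ as a common factor. Writing
\begin{align*}
\phi(\bar n) = (n_1^4 - n^4) - (n_2^4 - n_3^4)
= (n_1 - n)(n_1 + n)(n_1^2 + n^2) - (n_2 - n_3)(n_2 + n_3)(n_2^2 + n_3^2),
\end{align*}
and using the constraint in the form $n_2 - n_3 = n_1 - n$, I can pull $(n_1 - n)$ out, obtaining
\begin{align*}
\phi(\bar n) = (n_1 - n) \bigl[ (n_1 + n)(n_1^2 + n^2) - (n_2 + n_3)(n_2^2 + n_3^2)\bigr].
\end{align*}

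Next, I would extract $(n_1 - n_2)$ from the bracket $B$. The trick is to add and subtract $(n_2 + n_3)(n_1^2 + n^2)$, which yields
\begin{align*}
B = (n_2 + n_3)\bigl[(n_1^2 - n_2^2) + (n^2 - n_3^2)\bigr] + \bigl[(n_1 + n) - (n_2 + n_3)\bigr](n_1^2 + n^2).
\end{align*}
Each summand is manifestly divisible by $(n_1 - n_2)$: one uses $n_1^2 - n_2^2 = (n_1 - n_2)(n_1 + n_2)$ together with $n^2 - n_3^2 = (n - n_3)(n + n_3) = (n_1 - n_2)(n + n_3)$, while for the second summand the constraint gives $(n_1 + n) - (n_2 + n_3) = 2(n_1 - n_2)$. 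Collecting, I get
\begin{align*}
B = (n_1 - n_2)\bigl\{(n_2 + n_3)(n_1 + n_2 + n + n_3) + 2(n_1^2 + n^2)\bigr\}.
\end{align*}

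The final step is to verify that the braced factor equals $n_1^2 + n_2^2 + n_3^2 + n^2 + 2(n_1 + n_3)^2$. This is a purely mechanical check: substituting $n_2 = n_1 + n_3 - n$ into both expressions reduces each to $2[2n_1^2 + 3n_1 n_3 + 2n_3^2 + n^2 - n_1 n - n_3 n]$, which confirms the identity. The main obstacle is nothing conceptual, only keeping the signs straight during the two extractions; the key insight is simply which pairing of the four fourth powers exposes each linear factor, namely pairing $(n_1,n)$ and $(n_2,n_3)$ for the factor $(n_1 - n)$, and then using the add-subtract trick to surface $(n_1 - n_2)$.
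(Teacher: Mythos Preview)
Your proof is correct: the two extractions of the linear factors $(n_1-n)$ and $(n_1-n_2)$ are carried out cleanly, and the final verification of the quadratic factor checks out. Note that the paper itself does not prove this lemma but simply cites \cite{OTz1}; your direct algebraic argument is a valid self-contained substitute.
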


It follows from Lemma \ref{LEM:phase} that 
$\phi(\bar n) \ne 0$ on $\G(\bar n)$.
Namely, $\NN(v)$ and $\RR(v)$
on the right-hand side of \eqref{NLS5}
correspond to the non-resonant and resonant parts, respectively.
It follows from   Lemma \ref{LEM:phase}  that 
there is a strong smoothing property on the non-resonant  term $\NN(v)$
due to the fast oscillation caused by $\phi(\bar n)$.

\medskip

Given $t, \tau \in \R$, let 
$\Phi(t):L^2(\T)\to L^2(\T)$ be the solution map for \eqref{4NLS0}
and 
$ \Psi(t, \tau):L^2(\T)\to L^2(\T)$ be the solution map for \eqref{NLS5},\footnote{Note that \eqref{NLS5}
is non-autonomous.
We point out that this non-autonomy does not play an essential
role in the remaining part of the paper, since all the estimates hold uniformly in $t \in \R$.
}
sending initial data at time $\tau$
to solutions at time $t$. 
When $\tau =0$, we may denote $\Psi(t, 0)$ by $\Psi(t)$ for simplicity.
Then, from $v = S(-t)\circ \GG[u]$, we have 
\begin{align}
\Phi(t) = \GG^{-1} \circ S(t) \circ \Psi(t).
\label{nonlin2}
\end{align}

\noi
Recall the following lemma from \cite{OTz1}.

\begin{lemma}\label{LEM:gauss}
\textup{(i)}
Let $s>\frac 12$
and $t \in \R$.  Then, the Gaussian measure $\mu_s$ defined in \eqref{gauss0} is invariant under 
the linear map $S(t)$
and the gauge transformation $\GG_t$.

\smallskip

\noi
\textup{(ii)}
Let $(X, \mu)$ be a measure space.
Suppose that $T_1$ and $T_2$
are measurable maps on $X$ into itself
such that $\mu$ is quasi-invariant under $T_j$ for each $j = 1, 2$.
Then, $\mu$ is quasi-invariant under  $T = T_1 \circ T_2$.

\end{lemma}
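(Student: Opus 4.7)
The plan is to verify each assertion by a direct Fourier-side computation using the random series representation~\eqref{gauss1} of $\mu_s$, together with a routine measure-theoretic argument for part~(ii).

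For invariance under $S(t)=e^{-it\dx^4}$ in part~(i), I would first note that $S(t)$ acts on the $n$-th Fourier mode as multiplication by the unimodular factor $e^{-itn^4}$. Under the representation~\eqref{gauss1}, this amounts to replacing each independent standard complex Gaussian $g_n$ by $e^{-itn^4}g_n$. Since the law of a standard complex Gaussian is rotationally invariant and the phase factors are deterministic, independence across $n$ is preserved and the joint law is unchanged, giving $S(t)_*\mu_s=\mu_s$.

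For invariance under the gauge transformation $\GG_t$, the key observation is that $\GG_t$ multiplies every Fourier coefficient of $u$ by a \emph{single} random unimodular phase $e^{2it\fint |u|^2 dx}$ which depends only on the sequence of moduli $(|g_n|)_{n\in\Z}$, since $\fint|u|^2 dx = \frac{1}{2\pi}\sum_n \jb{n}^{-2s}|g_n|^2$. Writing each $g_n = |g_n|e^{i\theta_n}$ with $\theta_n$ uniform on $[0,2\pi)$ and the pairs $(|g_n|,\theta_n)_{n\in\Z}$ mutually independent, conditionally on $(|g_n|)_n$ the phase vector $(\theta_n)_n$ remains jointly uniform and independent, and shifting every $\theta_n$ by the common phase $\psi=2t\fint|u|^2 dx$ preserves this conditional law. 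Integrating out the conditioning yields $(\GG_t)_*\mu_s=\mu_s$. The hypothesis $s>\frac12$ enters only to guarantee $\sum_n \jb{n}^{-2s}|g_n|^2<\infty$ almost surely, so that $\GG_t$ is well defined on $\mu_s$-typical data.

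Part~(ii) follows from a standard unfolding of the composition $T=T_1\circ T_2$. For any measurable set $A\subset X$, write $T^{-1}(A)=T_2^{-1}(T_1^{-1}(A))$. If $\mu(A)=0$, then quasi-invariance under $T_1$ gives $\mu(T_1^{-1}(A))=0$, and quasi-invariance under $T_2$ in turn gives $\mu(T^{-1}(A))=0$, which proves $T_*\mu\ll\mu$. The converse $\mu\ll T_*\mu$ follows symmetrically from the mutual absolute continuity under each $T_j$: if $\mu(T_2^{-1}(T_1^{-1}(A)))=0$, then $\mu\ll (T_2)_*\mu$ yields $\mu(T_1^{-1}(A))=0$, and $\mu\ll (T_1)_*\mu$ then yields $\mu(A)=0$. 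No serious obstacle is expected in either part; the only mildly delicate point is the identification of the $\GG_t$-phase as a measurable function of the moduli alone, which reduces the gauge invariance to the rotational invariance of the independent complex Gaussians.
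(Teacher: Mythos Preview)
Your proposal is correct. The paper does not actually prove this lemma in the text: it defers part~(i) to \cite[Section~4]{OTz1} (noting that for $s=1$ one could alternatively invoke \cite{NRBSS}) and treats part~(ii) as standard. Your argument for $S(t)$-invariance via rotational invariance of the independent complex Gaussians is exactly the mechanism the paper alludes to just above \eqref{linear}, and your treatment of $\GG_t$---observing that the gauge phase depends only on the moduli $(|g_n|)_n$ and then using the conditional rotational invariance of the arguments---is a clean, self-contained version of what the cited reference does. Part~(ii) is indeed the routine unfolding you give.
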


When $s = 1$, Lemma \ref{LEM:gauss} (i) basically follows 
from Theorem 3.1 in \cite{NRBSS} which exploits the properties of the Brownian loop
under conformal mappings.
For general  $s > \frac 12$, 
such approach does not seem to be appropriate.
See Section 4 in \cite{OTz1} for the proof of Lemma~\ref{LEM:gauss}.

In view of this lemma, 
Theorem \ref{THM:1} follows once we  prove quasi-invariance of $\mu_s$ under $\Psi(t)$.
Therefore,  we focus our attention to  \eqref{NLS5}
in the following.

\subsection{Truncated dynamics}

Let us first introduce the following truncated approximation to \eqref{NLS5}:
\begin{align}
\dt v_n 
& = 
\ind_{|n|\leq N}\bigg\{-i 
\sum_{\G_N(n)} e^{-i \phi(\bar n) t} v_{n_1}\cj{v_{n_2}}v_{n_3}
+ i |v_n|^2 v_n\bigg\},  
\label{NLS6}
\end{align}

\noi
where $\G_N(n)$ is defined by 
\begin{align*}
\G_N(n) & = \G(n) \cap \{ (n_1, n_2, n_3) \in \Z^3: |n_j|\leq N\}\notag\\
& = \{(n_1, n_2, n_3) \in \Z^3:\, 
 n = n_1 - n_2 + n_3,  \,  n_1, n_3, \ne n, 
 \text{ and } |n_j| \leq N\}.
\end{align*}

\noi
Note that \eqref{NLS6} is an infinite dimensional system of ODEs
for the Fourier coefficients $\{ v_n \}_{n \in \Z}$, 
where the flow is constant on the high frequencies $\{|n|> N\}$.
We also consider the following finite dimensional system of ODEs:
\begin{align}
\dt v_n = 
-i 
\sum_{\G_N(n)} e^{-i \phi(\bar n) t} v_{n_1}\cj{v_{n_2}}v_{n_3}
+ i |v_n|^2 v_n,  \qquad |n| \leq N.
\label{NLS7}
\end{align}

\noi
with $v|_{t = 0} = \P_{\leq N}v|_{t = 0}$,
i.e.~$v_n|_{t=0} = 0$ for $|n| > N$.

Given $t, \tau \in \R$, 
 denote by $\Psi_N(t, \tau)$ and $\wt \Psi_N(t, \tau)$
the solution maps of \eqref{NLS6}
and \eqref{NLS7}, 
sending initial data at time $\tau$
to solutions at time $t$, respectively.
For simplicity, we set 
\begin{align}\Psi_N(t) = \Psi_N(t, 0)
\qquad \text{and}
\qquad 
\wt \Psi_N(t) = \wt \Psi_N(t, 0)
\label{Psit}
\end{align}

\noi
 when $\tau = 0$. 
Then, we have the following relations:
\begin{align*}
 \Psi_N(t, \tau) = \wt \Psi_N (t, \tau) \P_{\leq N} + \P_{>N}
\qquad \text{and}
\qquad
\P_{\leq N} \Psi_N(t, \tau) = \wt \Psi_N (t, \tau) \P_{\leq N}.
 \end{align*}

We now recall the following approximation property 
of the truncated dynamics \eqref{NLS6}.

\begin{lemma}[Proposition 6.21/B.3 in \cite{OTz1}]\label{LEM:approx}

Given $R>0$, let $A \subset B_R$ be a compact set in $L^2(\T)$.
Fix $t \in \R$.
Then, for any $\eps > 0$, there exists $N_0 = N_0(t, R, \eps) \in \N$ such that we have
\begin{align*} 
\Psi(t) (A)\subset \Psi_N(t) (A + B_\eps)
\end{align*}

\noi
for all $N \geq N_0$.
Here,  $B_r$  denotes 
the ball  in $ L^2(\T)$ of radius $r$ centered at the origin.

\end{lemma}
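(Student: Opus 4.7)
The plan is to realize each point of $\Psi(t)(A)$ as the image under $\Psi_N(t)$ of an initial datum that is $L^2$-close to $A$, by running the truncated flow backwards from $\Psi(t)(v_0)$. The content is therefore essentially an $L^2$-stability statement $\Psi_N(t)\to\Psi(t)$, uniform on compacta, combined with the invertibility of the truncated flow.

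Step 1 (setup). For each $v_0\in A$, set $\tilde v_0:=\Psi_N(0,t)\circ\Psi(t,0)(v_0)$, where $\Psi_N(0,t)$ denotes the inverse of $\Psi_N(t,0)$ (well defined since \eqref{NLS6} is an ODE on $E_N$ coupled with the identity on $E_N^\perp$). By construction $\Psi_N(t)(\tilde v_0)=\Psi(t)(v_0)$, so it suffices to establish $\|\tilde v_0-v_0\|_{L^2}<\eps$ for $N$ large, uniformly in $v_0\in A$.

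Step 2 ($L^2$ isometry and reduction). Both the non-resonant and resonant interactions in \eqref{NLS6} act as infinitesimal phase rotations at each frequency, so the truncated flow $\Psi_N(t,\tau)$ preserves the $L^2$-norm frequency by frequency; in particular $\Psi_N(0,t)$ is an $L^2$-isometry. Writing $\tilde v_0-v_0=\Psi_N(0,t)\bigl[\Psi(t)v_0-\Psi_N(t)v_0\bigr]$, we reduce the problem to showing
\[\sup_{v_0\in A}\|\Psi(t)v_0-\Psi_N(t)v_0\|_{L^2}\longrightarrow 0\quad\text{as }N\to\infty.\]

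Step 3 ($L^2$-approximation). Let $v(\tau)=\Psi(\tau)v_0$, $v^N(\tau)=\Psi_N(\tau)v_0$, and $w=v-v^N$. Subtracting \eqref{NLS6} from \eqref{NLS5} decomposes $\dt w$ into two contributions: a \emph{truncation error} involving trilinear terms with at least one frequency outside $\{|n|\le N\}$ and $\P_{>N}$ applied to the low-frequency part, and a \emph{Lipschitz} part that is trilinear and contains at least one factor of $w$. The first is $o_N(1)$ in $L^2$ for each fixed $v_0$, because $\|\P_{>N}v_0\|_{L^2}\to 0$ together with the $L^2$-mass conservation and local-in-time $L^2$-bounds on $v,v^N$ on $[0,t]$ (guaranteed by the $L^2$ global well-posedness recalled in the paper). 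The second is absorbed by a Gronwall bootstrap using $\|v\|_{L^\infty_\tau L^2}, \|v^N\|_{L^\infty_\tau L^2}\le R$. This yields pointwise convergence $\Psi_N(t)v_0\to\Psi(t)v_0$ in $L^2$ for each $v_0\in A$, together with an $L^2$-Lipschitz bound for $v_0\mapsto\Psi_N(t)v_0$ on $B_R$ that is uniform in $N$.

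Step 4 (compactness upgrade). Since $A$ is compact in $L^2$ and $\{\Psi_N(t)\}_N\cup\{\Psi(t)\}$ is equicontinuous on $B_R$, pointwise convergence upgrades to uniform convergence on $A$ by a standard $\eps/3$ argument: cover $A$ by finitely many $L^2$-balls of radius $\eps/(3L)$ centered at $v_0^{(1)},\dots,v_0^{(k)}$ (where $L$ is the common Lipschitz constant), choose $N_0$ so large that $\|\Psi(t)v_0^{(j)}-\Psi_N(t)v_0^{(j)}\|_{L^2}<\eps/3$ for every $j$ and $N\ge N_0$, and combine. This produces $\tilde v_0\in A+B_\eps$ with $\Psi_N(t)(\tilde v_0)=\Psi(t)(v_0)$, as required.

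The main obstacle is Step 3, namely the $L^2$-stability of \eqref{NLS5} against the spectral truncation that defines \eqref{NLS6}; all of the analytical content is there, and once it is in hand, Steps 1, 2, and 4 are essentially soft.
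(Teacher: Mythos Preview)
Your overall strategy---run the truncated flow backwards from $\Psi(t)v_0$, reduce to a uniform approximation $\Psi_N(t)\to\Psi(t)$ on compacta, upgrade pointwise convergence via equicontinuity---is the natural one, and the paper itself gives no proof here, simply citing \cite{OTz1}. But two steps in your sketch do not work as written.

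In Step 2 you write $\tilde v_0-v_0=\Psi_N(0,t)\bigl[\Psi(t)v_0-\Psi_N(t)v_0\bigr]$, treating $\Psi_N(0,t)$ as a linear map. It is not: $\Psi_N$ is a nonlinear flow. (Relatedly, the claim that the non-resonant term in \eqref{NLS6} ``acts as an infinitesimal phase rotation at each frequency'' is false---only the resonant part $i|v_n|^2v_n$ does that; the non-resonant sum genuinely couples modes.) What is true is that $\Psi_N(0,t)$ preserves the $L^2$-\emph{norm} by mass conservation, but not distances. The correct fix is to use instead a Lipschitz bound for $\Psi_N(0,t)$ on $B_R$, uniform in $N$; you already invoke such a bound later, so this is a minor repair.

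The substantive gap is Step 3. A direct Gronwall argument in $L^2$ on the difference $w=v-v^N$ cannot close: the cubic term satisfies $\||v|^2 w\|_{L^2}\le \|v\|_{L^\infty}^2\|w\|_{L^2}$, and $\|v\|_{L^\infty}$ is not controlled by $\|v\|_{L^2}$. Equivalently, on the Fourier side the trilinear convolution $\sum_{\Gamma(n)}v_{n_1}\bar v_{n_2}v_{n_3}$ is not bounded $\ell^2\to\ell^2$ using only $\ell^2$-control on the factors. To obtain the approximation at $L^2$ regularity one must use the same dispersive machinery (Strichartz estimates / Fourier restriction norms) that underlies the $L^2$ global well-posedness of \eqref{4NLS0}; this is exactly the content of the argument in \cite{OTz1} that the paper is citing. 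Your own closing remark that ``all of the analytical content is there'' is on point---but the content is not a Gronwall bootstrap, it is the $L^2$ local theory for the truncated and untruncated equations and its continuous dependence on data.
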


\subsection{Energy estimate}\label{SUBSEC:energy}

In this subsection, we state a crucial energy estimate.
The main goal is to establish an energy estimate of the form \eqref{P0}
by introducing a suitable modified energy functional.
We achieve this goal by performing normal form reductions
infinitely many times and
thus constructing an infinite sequence of correction terms.

Let  $N \in \N \cup\{\infty\}$.
In the following, 
we simply say that $v$ is a solution 
to \eqref{NLS7} 
if  $v$ is  a solution to \eqref{NLS7} when $N \in \N$
  and  to \eqref{NLS5} when $N = \infty$.

\begin{proposition}\label{PROP:energy}
Let $\frac 12< s < 1$. 
Then, given  $N \in \N \cup\{\infty\}$, 
 there exist multilinear forms 
$\big\{\NN_{0, N}^{(j)}\big\}_{j = 2}^\infty$, 
$\big\{\NN_{1, N}^{(j)}\big\}_{j = 2}^\infty$, 
and 
$\big\{\RR_N^{(j)}\big\}_{j = 2}^\infty$ 
such that 
\begin{align*} 
\frac {d}{dt} \bigg(\frac 12 \| v (t) \|_{H^s}^2\bigg)
=  \frac {d}{dt}\bigg( \sum_{j = 2}^\infty \NN^{(j)}_{0, N}(v)(t)\bigg)
 + \sum_{j = 2}^\infty \NN^{(j)}_{1, N}(v)(t) + \sum_{j = 2}^\infty \RR_N^{(j)}(v)(t), 
\end{align*}

\noi
for any solution  $v \in C(\R; H^s(\T))$ to \eqref{NLS7}.
Here, 
$\NN_{0, N}^{(j)}$ are $2j$-linear forms, 
while
 $\NN_{1, N}^{(j)}$ and   $\RR_N^{(j)}$  are $(2j+2)$-linear forms, satisfying the following bounds on $L^2(\T)$;
 there exist positive constants $C_0(j)$, $C_1(j)$, and $C_2(j)$
 decaying faster than any exponential  rate\footnote{In fact, 
by slightly modifying the proof,   we can make  $C_0(j)$, $C_1(j)$, and $C_2(j)$
 decay as fast as we want as $j \to \infty$.}
  as $j \to \infty$ 
 such that 
\begin{align*} 
| \NN^{(j)}_{0, N}(v)(t)|
& \les
C_0(j)  \|v\|_{L^2}^{2j}, 
 \\
| \NN^{(j)}_{1, N}(v)(t)| 
& \les
C_1(j) \|v\|_{L^2}^{2j+2},
\\
| \RR^{(j)}_N(v)(t)|
&  \les
C_2(j) \|v\|_{L^2}^{2j+2},
\end{align*}

\noi
for $j = 2, 3, \dots$.
Note that these constants
$C_0(j)$, $C_1(j)$, and $C_2(j)$
are  independent of the cutoff size $N \in \N \cup \{\infty\}$
and $t \in \R$.

Define the modified energy $\EE_{N, t}(v)$
by 
\begin{align}
\EE_{N, t}(v) := \frac 12 \| v(t) \|_{H^s}^2- \sum_{j = 2}^\infty \NN^{(j)}_{0, N}(v)(t).
\label{energy6}
\end{align}

\noi
Then, 
the following energy estimate holds:
\begin{align}
\bigg|\frac{d}{dt}\EE_{N, t}(v)\bigg| \leq C_s(\|v\|_{L^2})
\label{energy7}
\end{align}

\noi
for any solution  $v \in C(\R; H^s(\T))$ to \eqref{NLS7},
uniformly in $N \in \N \cup \{\infty\}$
and   $t \in \R$.

\end{proposition}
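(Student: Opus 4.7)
The plan is to iterate the Poincar\'e--Dulac normal form (differentiation by parts in time) indefinitely on the evolution equation for $\frac12\|v\|_{H^s}^2$, as sketched in (\ref{P1})--(\ref{P5}). I would begin by noting that the resonant contribution $i|v_n|^2 v_n$ in (\ref{NLS5}) pairs with $\jb{n}^{2s}\overline{v_n}$ to give a purely imaginary quantity, so
\[
\tfrac{d}{dt}\big(\tfrac12\|v(t)\|_{H^s}^2\big) = -\Re\, i \sum_{n\in\Z}\sum_{\G_N(n)} e^{-i\phi(\bar n)t}\jb{n}^{2s} v_{n_1}\overline{v_{n_2}} v_{n_3}\overline{v_n},
\]
which is the base case $j=1$. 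At each step $j$, I would partition the summation domain into a nearly resonant region where $|\phi(\bar n^{(j)})|$ is below a threshold $\sigma_j$ (a small negative power of the largest frequency) and its non-resonant complement. The nearly resonant piece is declared $\RR_N^{(j)}$. On the complement I divide by $\phi(\bar n^{(j)})$, which by Lemma \ref{LEM:phase} never vanishes on $\G_N$, and apply differentiation by parts; the ``boundary'' term $\frac{d}{dt}(\cdot)$ defines $\NN^{(j+1)}_{0,N}$, which will be absorbed into the modified energy (\ref{energy6}), while the ``bulk'' term in which $\partial_t$ falls on one of the $2j{+}2$ factors is expanded using (\ref{NLS5}). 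The resonant part of this substitution produces $\NN^{(j+1)}_{1,N}$, and the non-resonant part is fed into the next iteration.

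To track the combinatorics I would use the ordered bi-trees introduced in Section \ref{SEC:NF}: each iteration grows the tree either ``upward'' (derivative on $\overline{v_n}$) or ``downward'' (derivative on some $v_{n_k}$), and the total number of bi-trees of depth $j$ is controlled by a Catalan-type factor, so by some $C^j$. The analytic heart of the argument is to establish, for each bi-tree $T$ of depth $j$, an $L^2$-based multilinear bound of the form $|T(v)| \leq C(T)\|v\|_{L^2}^{2j}$ (or $\|v\|_{L^2}^{2j+2}$), with $\sum_{T}C(T)$ decaying faster than any exponential in $j$. Factoring $\phi(\bar n)=(n_1-n_2)(n_1-n)\cdot q(\bar n)$ with $|q|\sim n_{\max}^2$ via Lemma \ref{LEM:phase}, and using the divisor bound $d(m)\les_\varepsilon |m|^\varepsilon$ to absorb the two linear factors, each reduction yields a gain of $n_{\max}^{-2+}$, which is first deployed against the $\jb{n^{(1)}}^{2s}$-weight and thereafter furnishes summability in $j$. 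The hypothesis $s<1$ ensures that the weight is consumed by a single reduction, leaving room at every subsequent generation; Cauchy--Schwarz applied generation by generation on the bi-tree then closes the bound.

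The main obstacle I expect is the simultaneous tuning of the thresholds $\sigma_j$: the nearly resonant region must be small enough that $\RR_N^{(j)}$ satisfies the claimed $L^2$-bound, yet the denominator $1/\phi$ on the non-resonant region must be large enough to produce a gain per generation that survives the divisor-counting loss and the tree-count factor, uniformly in $j$. A second delicate point is the truncation remainder $\NN^{(J+1)}_{2,N}$ produced after $J$ steps: on the $N$-truncated dynamics it admits only a bound polynomial in $N$, but the accumulated non-resonant factor $\prod_{k\leq J}(cN^{-\delta})$ forces $\NN^{(J+1)}_{2,N}\to 0$ as $J\to\infty$ for each fixed $N$, as flagged in Subsection \ref{SUBSEC:error}. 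Passing to the limit in $J$ (with $N$ fixed, to be sent to infinity only later in the proof of Theorem \ref{THM:1}) produces the identity claimed in the proposition, and the bound (\ref{energy7}) on $\tfrac{d}{dt}\EE_{N,t}(v)$ follows immediately from the $L^2$-bounds on $\NN^{(j)}_{1,N}$ and $\RR_N^{(j)}$ together with the conservation of $\|v\|_{L^2}$ along (\ref{NLS7}).
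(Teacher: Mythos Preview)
Your overall architecture---iterate differentiation by parts on $\tfrac{d}{dt}\tfrac12\|v\|_{H^s}^2$, index the resulting terms by ordered bi-trees, and close the $L^2$ estimates via divisor counting and Lemma~\ref{LEM:phase}---matches the paper's strategy. However, several points of your execution diverge from the paper in ways that would cause the argument to fail as written.

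\medskip

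\textbf{Threshold on the cumulative phase, not the individual one.} You propose to split at step $j$ according to the size of the \emph{new} phase $\phi(\bar n^{(j)})$. But after differentiation by parts the oscillatory factor is $e^{-i\wt\phi_{j+1}t}$ with $\wt\phi_{j+1}=\phi_1+\cdots+\phi_{j+1}$, and the factor you divide by at the next step is $\wt\phi_{j+1}$, not $\phi_{j+1}$. A lower bound on $|\phi_{j+1}|$ does not give one on $|\wt\phi_{j+1}|$. The paper therefore defines the near-resonant set as $C_j=\{|\wt\phi_{j+1}|\le (2j+4)^3\}$ (see~\eqref{Cj}), with cutoff depending only on $j$, not on frequencies. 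This is what makes the iterated divisor-counting argument in Lemma~\ref{LEM:sum1} work: one sums over level sets of $\wt\phi_2,\dots,\wt\phi_J$.

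\medskip

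\textbf{The bi-tree count is factorial, not exponential.} You write that the number of ordered bi-trees of depth $j$ is ``Catalan-type\dots some $C^j$.'' In fact $|\mathfrak{BT}(J)|=2^{J-1}J!$ (equation~\eqref{cj1}), since at the $j$th step the time derivative can fall on any of the $2j+2$ terminal factors. This factorial growth is precisely why the threshold is chosen to grow polynomially in $j$: the gain $\prod_{j=2}^J(2j+2)^{-\frac32+}$ beats $2^{J-1}J!$, whereas an exponential gain would not. If you design the scheme believing the tree count is only exponential, your per-generation gain will be insufficient.

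\medskip

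\textbf{Identification of the pieces.} In the paper $\RR^{(j)}$ denotes the contribution from substituting the \emph{resonant} part $\RR(v)_n=i|v_n|^2v_n$ of the equation (hence the name), while $\NN_1^{(j)}$ is the restriction of the non-resonant substitution to the near-resonant set $C_{j-1}$. You have these roles interchanged. This is partly terminological, but it matters for the estimates: $\RR^{(j)}$ is bounded trivially from $\ell^2\hookrightarrow\ell^6$ on top of the $\NN_0^{(j)}$ bound, whereas $\NN_1^{(j)}$ requires the second half of Lemma~\ref{LEM:sum1} exploiting the constraint $|\wt\phi_{J+1}|\le (2J+4)^3$.

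\medskip

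Finally, for the remainder $\NN_2^{(J+1)}$: the paper does not use an accumulated factor of the form $\prod_{k\le J}(cN^{-\delta})$; rather, it shows directly (Lemma~\ref{LEM:N^J+1_2}) that on $\bigcap_{j=1}^J C_j^c$ the denominator $\prod_{j=2}^J|\wt\phi_j|\ge\prod_{j=2}^J(2j+2)^3$ already forces decay in $J$, using only $v\in H^s$ with $s>\tfrac12$ to control the $\jb{n_r}^{2s}$ weight via~\eqref{X2}. The order of limits is as you say ($J\to\infty$ first, then $N\to\infty$), but the mechanism of vanishing is the factorial-beating cumulative threshold, not a power of $N^{-\delta}$.
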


In the remaining part of this section,  we continue with the proof of Theorem \ref{THM:1},
assuming Proposition~\ref{PROP:energy}.
We present the proof of Proposition~\ref{PROP:energy}
in Section~\ref{SEC:NF}.
 See Lemmas~\ref{LEM:N^J+1_0} 
and~\ref{LEM:N^J+1_1}.
In the following, we simply denote $\EE_{\infty, t}$
by $\EE_{ t}$
and  drop the subscript $N = \infty$ from the multilinear forms,
when $N = \infty$.
For example, we write 
$ \NN^{(j)}_{0}$ for $\NN^{(j)}_{0, \infty}$.

\subsection{Weighted Gaussian measures}

Let $s > \frac 12$.
As in \cite{OTz1}, we would like to  define the weighted Gaussian measures
associated with the modified energies $\EE_{N, t}(v)$ and $\EE_t(v)$:\footnote{Noting that we have $\P_{\leq N} v = v$ for all solutions to \eqref{NLS7}, 
we have $\EE_{N, t}(\P_{\leq N} v) = \EE_{N, t}( v)$.
In the following, we explicitly insert $\P_{\leq N}$ for clarity.
A similar comment applies to $\NN^{(j)}_{0, N}$.}
\begin{align}
d \rho_{s, N, r, t }
&  = \text{``}Z_{s, N, r}^{-1} \ind_{\{ \| v\|_{L^2 } \leq r\}} e^{-  \EE_{N, t}(\P_{\leq N} v)} dv\text{''} \notag\\
& =  
Z_{s, N, r}^{-1} F_{N, r, t} d\mu_s
\label{Gibbs1}
\end{align}

\noi
and 
\begin{align}
d \rho_{s, r, t}
&  = \text{``}Z_{s, r}^{-1} \ind_{\{ \| v\|_{L^2 } \leq r\}} e^{-  \EE_t( v)} dv\text{''} \notag\\
& =  
  Z_{s, r}^{-1} F_{r, t} d\mu_s,
\label{Gibbs1a}
\end{align}

\noi
where $\EE_{N, t}$ is the modified energy defined in \eqref{energy6}
and 
$F_{N, r, t }$ and $F_{r, t }$  are given by 
\begin{align}
F_{N, r, t }(v) & := \ind_{\{ \| v\|_{L^2 } \leq r\}} \exp\bigg( \sum_{j = 2}^\infty \NN^{(j)}_{0, N}(\P_{\leq N} v)(t)\bigg),
\label{Gibbs2}\\
F_{ r, t }(v) & = 
F_{ r,\infty,  t }(v): =
\ind_{\{ \| v\|_{L^2 } \leq r\}} \exp\bigg( \sum_{j = 2}^\infty \NN^{(j)}_0( v)(t)\bigg).\notag 
\end{align}

It follows from Proposition \ref{PROP:energy}
that
\begin{align*}
\ind_{\{ \| v\|_{L^2 } \leq r\}}
\exp\bigg( \sum_{j = 2}^\infty \big|\NN^{(j)}_{0, N}( \P_{\leq N}v)\big|\bigg)
& \leq \exp\bigg(\sum_{j = 2}^\infty
C_0(j)  r^{2j}\bigg)
\leq C(s, r),
\end{align*}

\noi
uniformly in $N \in \mathbb{N} \cup\{\infty\}$ and $t \in \R$.	
Hence, we have
\begin{align*}
Z_{s, N, r} = \int_{H^{s-\frac{1}{2}-}} F_{N, r, t}(v) d\mu_s \leq C(s, r),
\end{align*}

\noi
uniformly in $N \in \mathbb{N}\cup\{\infty\}$ and $t \in \R$.	
See also Remark \ref{REM:t1} below.
This shows that 
$\rho_{s, N, r, t}$ and 
$\rho_{s, r, t}$ in \eqref{Gibbs1} and \eqref{Gibbs1a} are well defined probability measures
 on $H^{s-\frac 12 -\eps}(\T)$, $\eps > 0$.
Moreover, the following lemma immediately follows from the computation above
as in  \cite[Proposition 6.2 and Corollary 6.3]{OTz1}.
See Subsection \ref{SUBSEC:finite}.

\begin{lemma}\label{LEM:Gibbs}
Let $s > \frac 12$ and $r > 0$.

\smallskip

\noi
\textup{(i)}
Given  any finite $p \geq 1$, 
$F_{N, r, t}(v)$ converges to  $F_{r, t}(v)$ in $L^p (\mu_s)$,
uniformly in $t\in \R$,  as $N \to \infty$.

\smallskip

\noi
\textup{(ii)}
 For any  $\g > 0$, there exists $N_0 \in \N$ such that 
\[ |  \rho_{s, N, r, t}(A) - \rho_{s, r, t}(A)| < \g \]

\noi
for any $N \geq N_0$
and  any measurable set 
$A \subset L^2(\T)$,
uniformly in $t\in \R$.
\end{lemma}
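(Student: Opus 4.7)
The plan is to reduce both parts to a single dominated-convergence argument, where the domination is furnished by the $L^2$-bounds of Proposition \ref{PROP:energy}, and the uniformity in $t$ is inherited from the $t$-independence of those bounds. The point is that the cutoff $\ind_{\{\|v\|_{L^2}\le r\}}$ forces everything onto a region where the series defining the weights are uniformly absolutely convergent.

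First I would record the basic a priori bound. On the set $\{\|v\|_{L^2}\le r\}$, Proposition \ref{PROP:energy} gives
\begin{align*}
\bigg|\sum_{j=2}^\infty \NN^{(j)}_{0,N}(\P_{\le N} v)(t)\bigg|
\;\le\; \sum_{j=2}^\infty C_0(j)\, r^{2j} \;=:\; K(r) \;<\;\infty,
\end{align*}
uniformly in $N\in\N\cup\{\infty\}$ and $t\in\R$, since $C_0(j)$ decays faster than any exponential. Consequently $0\le F_{N,r,t}(v), F_{r,t}(v)\le e^{K(r)}$ pointwise, and the partition functions satisfy $Z_{s,N,r}, Z_{s,r}\in[c(r),C(s,r)]$ uniformly in $N,t$ (the lower bound coming from $e^{-K(r)}\mu_s(\{\|v\|_{L^2}\le r\})>0$).

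Next I would prove pointwise $\mu_s$-a.e.\ convergence of $F_{N,r,t}$ to $F_{r,t}$, uniformly in $t$. For each fixed $j\ge 2$, the explicit construction of $\NN^{(j)}_{0,N}$ from Section \ref{SEC:NF} expresses it as a sum indexed by (ordered bi-)trees of an oscillatory multilinear expression $\sum m(\bar n)\,e^{it\Phi(\bar n)}\prod v_{n_\l}\cj{v_{n_\l'}}$ restricted to $|n_\l|,|n_\l'|\le N$. Since the total multiplier is absolutely summable on $L^2$ (that is precisely the content of $|\NN^{(j)}_{0,N}(v)(t)|\les C_0(j)\|v\|_{L^2}^{2j}$, uniformly in $t$), the difference $\NN^{(j)}_{0,N}(\P_{\le N}v)(t)-\NN^{(j)}_{0}(v)(t)$ is dominated in absolute value by a tail sum that is independent of $t$ and vanishes as $N\to\infty$ for every $v\in L^2(\T)$. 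Summing over $j$ with the $C_0(j)r^{2j}$-domination (dominated convergence on the counting measure in $j$) gives
\begin{align*}
\lim_{N\to\infty}\sup_{t\in\R}\bigg|\sum_{j=2}^\infty \NN^{(j)}_{0,N}(\P_{\le N}v)(t)-\sum_{j=2}^\infty \NN^{(j)}_0(v)(t)\bigg|=0
\end{align*}
for $\mu_s$-a.e.\ $v$, and hence, by Lipschitz continuity of $\exp$ on bounded sets together with the fact that $\mu_s(\{\|v\|_{L^2}=r\})=0$, we obtain $F_{N,r,t}(v)\to F_{r,t}(v)$ pointwise in $v$ and uniformly in $t$.

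With these two ingredients, part (i) follows from dominated convergence against the constant $2e^{K(r)}$: for any $p\in[1,\infty)$,
\begin{align*}
\|F_{N,r,t}-F_{r,t}\|_{L^p(\mu_s)}^p
\;\le\; (2e^{K(r)})^{p-1}\,\|F_{N,r,t}-F_{r,t}\|_{L^1(\mu_s)}\;\longrightarrow\;0,
\end{align*}
uniformly in $t$ (the $t$-uniformity in the last step comes from the $t$-uniform pointwise convergence combined with dominated convergence). For part (ii), write
\begin{align*}
\rho_{s,N,r,t}(A)-\rho_{s,r,t}(A)
=\frac{1}{Z_{s,N,r}}\int_A (F_{N,r,t}-F_{r,t})\,d\mu_s
+\Big(\tfrac{1}{Z_{s,N,r}}-\tfrac{1}{Z_{s,r}}\Big)\int_A F_{r,t}\,d\mu_s,
\end{align*}
and bound the first term by $c(r)^{-1}\|F_{N,r,t}-F_{r,t}\|_{L^1(\mu_s)}$ and the second by $c(r)^{-2}|Z_{s,N,r}-Z_{s,r}|\cdot e^{K(r)}$, both of which tend to $0$ uniformly in $t$ and $A$ by part (i) applied with $p=1$.

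The main obstacle I anticipate is the pointwise convergence step for the individual multilinear forms $\NN^{(j)}_{0,N}$, since this is where the details of the tree expansion from Section \ref{SEC:NF} must be invoked to justify that the frequency truncation $\P_{\le N}$ can indeed be removed termwise; once that structural fact is in hand, the rest is routine real analysis.
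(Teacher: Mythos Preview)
Your approach is essentially the same as the paper's. The paper likewise reduces everything to (a) the uniform bound $|F_{N,r,t}|,|F_{r,t}|\le e^{K(r)}$ coming from Proposition~\ref{PROP:energy}, and (b) pointwise convergence $\mathfrak{S}_N(v)\to\mathfrak{S}_\infty(v)$ for every $v\in L^2(\T)$, then passes to $L^p(\mu_s)$-convergence; part~(ii) is deduced from part~(i) exactly as you do. Two minor differences: the paper invokes Egoroff's theorem where you use dominated convergence directly (either works given the uniform bound), and the paper spells out in Subsection~\ref{SUBSEC:finite} the telescoping argument for the step you flag as the main obstacle. Specifically, the paper writes $\NN^{(j)}_0(v)-\NN^{(j)}_{0,N}(v)=\1_j+\II_j$, where $\II_j$ removes the terminal-node truncations one at a time via multilinearity, and $\1_j$ removes the non-terminal (parental) frequency cutoffs one at a time, using that a large parental frequency forces at least one large terminal frequency; each piece then tends to zero by Lemma~\ref{LEM:N^J+1_0}. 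Your sketch (``dominated by a tail sum independent of $t$'') is correct in spirit but would need exactly this decomposition to be complete, since $\NN^{(j)}_{0,N}$ truncates hidden parental frequencies as well as terminal ones. One small remark: you do not need $\mu_s(\{\|v\|_{L^2}=r\})=0$, since $F_{N,r,t}$ and $F_{r,t}$ carry the \emph{same} indicator $\ind_{\{\|v\|_{L^2}\le r\}}$.
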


\begin{remark}\label{REM:t1}\rm 
The normalizing constants
$Z_{s, N, r}$ and $Z_{s, r}$
a priori depend on $t \in \R$.
It is, however, easy to see that 
they are indeed independent of $t \in \R$
by (i) noticing that 
the correction terms $\big\{\NN^{(j)}_{0, N}\big\}_{j = 2}^\infty$ defined in 
Proposition \ref{PROP:energy}
is in fact autonomous in terms of $\wt u (t)= S(t) v(t)$
and 
(ii)  the invariance of $\mu_s$ under $S(t)$ (Lemma \ref{LEM:gauss}).
See also Remark \ref{REM:tdepend} below.
The same comment applies to the normalizing constant 
$\ft Z_{s, N, r}$ defined in \eqref{norm1}.

\end{remark}

\subsection{A change-of-variable formula}

Next, we go over an important global aspect of the proof of Theorem \ref{THM:1}.
Given $N \in \N$, let $d L_N = \prod_{|n|\leq N} d\ft u_n $ denote the Lebesgue
measure on $E_N \cong \C^{2N+1}$.
Then, 
from \eqref{Gibbs1}
and \eqref{Gibbs2} with \eqref{G4}, we have 
\begin{align*}
d \rho_{s, N, r, t} 
& = Z_{s, N, r}^{-1} 
\ind_{\{ \| v\|_{L^2 } \leq r\}} e^{ \sum_{j = 2}^\infty \NN^{(j)}_{0, N}(\P_{\leq N} v)}
 d\mu_s\notag\\
& =\ft Z_{s, N, r}^{-1} 
\ind_{\{ \| v\|_{L^2 } \leq r\}} e^{- \EE_{N, t}(\P_{\leq N} v)}
dL_N \otimes
 d\mu_{s, N}^\perp, 
\end{align*}

\noi
where $\ft Z_{s, N, r}$ is a normalizing constant defined by
\begin{align}
\ft Z_{s, N, r}  = \int_{L^2} \ind_{\{ \| v\|_{L^2 } \leq r\}} 
e^{- \EE_{N, t}( \P_{\leq N}  v)} d L_N \otimes d\mu_{s, N}^\perp.
\label{norm1}
\end{align}

\noi
Then, proceeding as in \cite{OTz1}
and exploiting invariance of  $L_N$ under the map $\wt \Psi_N(t, \tau)$ for~\eqref{NLS7}, 
we have the following change-of-variable formula.

\begin{lemma}\label{LEM:meas1}
Let $s > \frac 12$, $N \in \N$, and $r > 0$.
Then, we have 
\begin{align*}
\rho_{s, N, r, t}(\Psi_N(t, \tau)(A))
& = Z_{s, N, r}^{-1}\int_{\Psi_N(t, \tau)(A)} \ind_{\{ \| v\|_{L^2 } \leq r\}} e^{\sum_{j = 2}^\infty \NN^{(j)}_{0, N}( \P_{\leq N} v)} d\mu_s(v)
\notag \\
& = \ft Z_{s, N, r}^{-1}\int_{A} \ind_{\{ \| v\|_{L^2 } \leq r\}} 
e^{- \EE_{N, t}( \P_{\leq N} \Psi_N(t, \tau) (v))} d L_N \otimes d\mu_{s, N}^\perp
\end{align*}

\noi	
for any $t, \tau \in \R$ and any measurable set 
$A \subset L^2(\T)$.
Here, $\Psi_N(t, \tau)$ is the solution map to~\eqref{NLS6}
defined in 	\eqref{Psit}.
\end{lemma}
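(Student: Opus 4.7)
The first displayed equality is a direct unpacking of the definition \eqref{Gibbs1}--\eqref{Gibbs2} of $\rho_{s,N,r,t}$, so the content of the lemma lies in the second equality, a change-of-variable formula under the truncated flow $\Psi_N(t,\tau)$. My plan is to first rewrite $\rho_{s,N,r,t}$ in a form adapted to the finite-dimensional structure of $\Psi_N(t,\tau)$, and then perform a change of variables whose nontrivial part is confined to the finite-dimensional factor $E_N$.

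In the first step, I would combine the Gaussian density $e^{-\frac{1}{2}\|\P_{\leq N}v\|_{H^s}^2}$ coming from $\mu_{s,N}$ in \eqref{G4} with the correction $\sum_{j\geq 2}\NN_{0,N}^{(j)}(\P_{\leq N}v)$ in \eqref{Gibbs2}. By the definition \eqref{energy6} of the modified energy, the exponents combine to produce $e^{-\EE_{N,t}(\P_{\leq N}v)}$, and from $d\mu_s=d\mu_{s,N}\otimes d\mu_{s,N}^\perp$ the measure \eqref{Gibbs1} can be rewritten as
\[
d\rho_{s,N,r,t}(v) = \ft Z_{s,N,r}^{-1}\,\ind_{\{\|v\|_{L^2}\leq r\}}\, e^{-\EE_{N,t}(\P_{\leq N}v)}\, dL_N\otimes d\mu_{s,N}^\perp,
\]
with $\ft Z_{s,N,r}$ as in \eqref{norm1}. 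This is already the density appearing on the right-hand side of the claim.

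In the second step, I would evaluate $\rho_{s,N,r,t}(\Psi_N(t,\tau)(A))$ via the substitution $w=\Psi_N(t,\tau)v$. Using the splitting $\Psi_N(t,\tau) = \wt\Psi_N(t,\tau)\P_{\leq N}+\P_{>N}$ noted before Lemma~\ref{LEM:approx}, this substitution acts as the finite-dimensional diffeomorphism $\wt\Psi_N(t,\tau)$ on $E_N$ and as the identity on $E_N^\perp$. Three ingredients then close the argument: (a) Liouville's theorem, $(\wt\Psi_N(t,\tau))_*dL_N = dL_N$; (b) the trivial invariance of $\mu_{s,N}^\perp$ under the identity on $E_N^\perp$; and (c) mass conservation for \eqref{NLS6}, $\|\Psi_N(t,\tau)v\|_{L^2}=\|v\|_{L^2}$, which pulls the cutoff $\ind_{\{\|w\|_{L^2}\leq r\}}$ back to $\ind_{\{\|v\|_{L^2}\leq r\}}$. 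Combining (a)--(c) converts the integral over $\Psi_N(t,\tau)(A)$ into the integral over $A$ appearing on the right-hand side of the claim.

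The main obstacle is (a), since \eqref{NLS7} is non-autonomous (because of the oscillatory phases $e^{-i\phi(\bar n)t}$) and one cannot directly invoke phase-space volume conservation for a time-independent Hamiltonian. My route is to observe that \eqref{NLS7} is the interaction-representation form of the truncation of the renormalized 4NLS \eqref{NLS4} to $E_N\cong\C^{2N+1}$ via $v_n=e^{itn^4}\wt u_n$: the truncation of \eqref{NLS4} is a finite-dimensional Hamiltonian flow on the symplectic space $E_N$ and hence preserves $dL_N$, while the conjugation to the interaction representation is mode-by-mode multiplication by unimodular exponentials, a unitary change of coordinates on $E_N$ that also preserves $dL_N$; composing the two gives the desired Liouville property for $\wt\Psi_N(t,\tau)$. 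Since the nontrivial part of the change of variables is confined to the finite-dimensional $E_N$ factor, no infinite-dimensional Radon--Nikodym issue arises in handling $d\mu_{s,N}^\perp$.
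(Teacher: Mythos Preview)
Your proposal is correct and follows essentially the same approach as the paper, which simply refers to \cite{OTz1} and invokes the invariance of $L_N$ under $\wt\Psi_N(t,\tau)$ together with the product structure $\mu_s=\mu_{s,N}\otimes\mu_{s,N}^\perp$ and mass conservation. Your justification of the Liouville property via conjugation to the truncated Hamiltonian flow of \eqref{NLS4} is valid; an equivalent (and equally standard) route is to check directly that the vector field in \eqref{NLS7} is divergence-free on $E_N$, using that $n_1,n_3\ne n$ on $\Gamma_N(n)$.
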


\subsection{On the measure evolution property
and the proof of Theorem \ref{THM:1}}\label{SUBSEC:evo}

In this subsection, we 
use the energy estimate (Proposition \ref{PROP:energy})
and the change-of-variable formula (Lemma  \ref{LEM:meas1})
to establish a growth estimate
on the truncated weighted Gaussian measure $\rho_{s, N, r, t}$
under  $\Psi_N(t) = \Psi_N(t, 0)$ for \eqref{NLS6}.
Thanks to the improved energy estimate, 
the following estimates are simpler than those presented in \cite{OTz1}.

\begin{lemma}\label{LEM:meas2}
Let $ \frac 12 < s < 1$. 
Then,  given  $r > 0$, 
there exists $C = C(r)>0$ such that 
\begin{align}
\frac{d}{dt} \rho_{s, N, r, t}(\Psi_N(t) (A))
\leq C  \rho_{s, N, r, t} (\Psi_N(t)(A))
\label{meas2}
\end{align}

\noi
for any $N \in \N$, any $t \in \R$, 
and  any measurable set 
$A \subset L^2(\T)$.
As a consequence, we have the following estimate;
given  $t \in \R$ and $r > 0$, 
 there exists $C = C(t,  r) > 0$
such that 
\begin{align}
\rho_{s, N, r, t} (\Psi_N(t) (A)) \leq C\rho_{s, N, r, t} ( A) 
\label{meas3}
\end{align}
	
\noi
for 
any $N \in \N$
and  any measurable set 
$A \subset L^2(\T)$.

\end{lemma}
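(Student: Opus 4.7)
The plan is to compute the time derivative of $f(t) := \rho_{s, N, r, t}(\Psi_N(t)(A))$ directly using the change-of-variable formula of Lemma \ref{LEM:meas1}. Taking $\tau = 0$ there gives
\[
f(t) = \ft Z_{s, N, r}^{-1}\int_{A} \ind_{\{\|v\|_{L^2}\leq r\}}\, e^{-\EE_{N, t}(\P_{\leq N}\Psi_N(t)(v))}\, dL_N\otimes d\mu_{s, N}^\perp.
\]
Three structural observations are used: (i) the normalizing constant $\ft Z_{s, N, r}$ is independent of $t$ by Remark~\ref{REM:t1}; (ii) $\|v\|_{L^2}$ is time-independent (the cutoff carries no $t$-dependence); and (iii) all $t$-dependence is concentrated in $\EE_{N, t}(\P_{\leq N}\Psi_N(t)(v))$, so differentiation under the integral is justified once a pointwise bound on $\frac{d}{dt}\EE_{N, t}$ is obtained on the support of the indicator.

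To control the time derivative, note that $\P_{\leq N}\Psi_N(t)(v) = \wt\Psi_N(t)(\P_{\leq N}v)$ is a solution of the finite-dimensional system \eqref{NLS7}, and this flow preserves the $L^2$-norm (both the resonant term $i|v_n|^2 v_n$ and the non-resonant term are $L^2$-preserving). Hence
\[
\|\P_{\leq N}\Psi_N(t)(v)\|_{L^2} = \|\P_{\leq N}v\|_{L^2} \leq \|v\|_{L^2} \leq r
\]
on the support of $\ind_{\{\|v\|_{L^2}\leq r\}}$, and applying the energy estimate \eqref{energy7} of Proposition~\ref{PROP:energy} gives $\bigl|\frac{d}{dt} \EE_{N, t}(\P_{\leq N}\Psi_N(t)(v))\bigr| \leq C_s(r)$. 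Pulling the derivative inside the integral and inserting this pointwise bound gives $|f'(t)| \leq C_s(r)\, f(t)$, which is precisely \eqref{meas2}.

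For \eqref{meas3}, Gronwall's inequality applied to \eqref{meas2} yields $\rho_{s, N, r, t}(\Psi_N(t)(A)) \leq e^{C_s(r)|t|}\, \rho_{s, N, r, 0}(A)$, so it only remains to compare $\rho_{s, N, r, 0}(A)$ with $\rho_{s, N, r, t}(A)$. The uniform bounds in Proposition~\ref{PROP:energy} give, for any $v$ with $\|v\|_{L^2} \leq r$,
\[
\bigg|\sum_{j = 2}^\infty \NN^{(j)}_{0, N}(\P_{\leq N}v)(t)\bigg| \leq \sum_{j = 2}^\infty C_0(j)\, r^{2j} =: \wt C(r) < \infty,
\]
uniformly in $N$ and $t$; combined with the $t$-independence of $Z_{s, N, r}$ (Remark~\ref{REM:t1}), this implies $e^{-2\wt C(r)}\, \rho_{s, N, r, t}(A) \leq \rho_{s, N, r, 0}(A) \leq e^{2\wt C(r)}\, \rho_{s, N, r, t}(A)$, and \eqref{meas3} follows with $C(t, r) = e^{C_s(r)|t| + 2\wt C(r)}$. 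The main conceptual obstacle is the pointwise uniform control of $\frac{d}{dt}\EE_{N, t}$ by a constant depending only on the $L^2$-norm—this is exactly what the improved energy estimate of Proposition~\ref{PROP:energy}, obtained from the infinite iteration of normal form reductions, delivers; everything else here reduces to bookkeeping via $L^2$-conservation of the truncated flow and $t$-independence of the normalization.
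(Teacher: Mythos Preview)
Your proof is correct and follows essentially the same route as the paper: apply the change-of-variable formula (Lemma~\ref{LEM:meas1}), differentiate under the integral, and invoke the energy estimate \eqref{energy7} of Proposition~\ref{PROP:energy} together with $L^2$-conservation for the truncated flow to obtain \eqref{meas2}. The paper phrases this as ``reducing to $t=0$'' by writing $\Psi_N(t)=\Psi_N(t,t_0)\circ\Psi_N(t_0)$ and applying Lemma~\ref{LEM:meas1} with $\tau=t_0$, while you apply it once with $\tau=0$; these are equivalent since the flow property is already built into the change-of-variable formula.

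For \eqref{meas3} the paper simply says ``direct integration of \eqref{meas2}'', which literally yields $\rho_{s,N,r,t}(\Psi_N(t)(A))\le e^{C(r)|t|}\rho_{s,N,r,0}(A)$ rather than a bound by $\rho_{s,N,r,t}(A)$. You correctly supply the missing comparison $\rho_{s,N,r,0}(A)\le e^{2\wt C(r)}\rho_{s,N,r,t}(A)$ via the uniform bound on the correction terms $\sum_j\NN^{(j)}_{0,N}$ and the $t$-independence of the normalizing constant. This is a genuine (if easy) extra step that the paper elides, so your version is in fact more complete on this point.
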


\begin{proof}
 As in \cite{TzV1, TzV2, TzBBM, OTz1}, 
the main idea of the proof of Lemma \ref{LEM:meas2}
is to reduce the analysis to that at $t = 0$ in the spirit of the classical Liouville theorem
on Hamiltonian dynamics.
Let  $t_0 \in \R$.
By the definition of 
$\Psi(t, \tau)$, Lemma \ref{LEM:meas1}, and Proposition \ref{PROP:energy},  we have 
\begin{align*}
\frac{d}{dt}\rho_{s, N, r, t}&  (\Psi_N(t)(A))\bigg|_{t = t_0}\notag\\
& = \frac{d}{dt}\rho_{s, N, r, t_0+t} \big(\Psi_N(t_0+ t, t_0)(\Psi_N(t_0)(A))\big)\bigg|_{t = 0}\notag \\
& = \ft Z_{s, N, r}^{-1}
\frac{d}{dt} \int_{\Psi_N(t_0) (A)} \ind_{\{ \| v\|_{L^2 } \leq r\}} 
e^{- \EE_{N, t_0+t}( \P_{\leq N} \Psi_N(t_0+ t, t_0) (v))} d L_N \otimes d\mu_{s, N}^\perp \bigg|_{t = 0} \notag\\
& = -  
 \int_{\Psi_N(t_0)( A)} 
\frac{d}{dt} \EE_{N, t_0+t}\big( \P_{\leq N} \Psi_N(t_0 + t, t_0) (v)\big)\bigg|_{t = 0}
 d \rho_{s, N, r, t_0}\\
& \leq C_r
  \rho_{s, N, r, t_0}(\Psi_N(t_0) (A)).
\end{align*}

\noi
This proves \eqref{meas2}.
The second estimate \eqref{meas3} follows from a direct integration of \eqref{meas2}.
\end{proof}

As in \cite{OTz1}, 
we can upgrade 
 Lemma \ref{LEM:meas2}
to the untruncated measure $\rho_{s, r, t}$.

\begin{lemma}\label{LEM:meas4}
Let $ \frac 12< s < 1$.
Then, given  $t \in \R$ and  $r > 0$, 
 there exists $C = C(t, r) > 0$
such that 
\begin{align*}
\rho_{s,  r, t} (\Psi(t) (A)) \leq C\rho_{s,  r, t} ( A) 
\end{align*}

\noi
for  any measurable set 
$A  \subset L^2(\T)$.

\end{lemma}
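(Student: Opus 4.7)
The plan is to upgrade the truncated estimate \eqref{meas3} by combining the approximation Lemma \ref{LEM:approx}, the uniform convergence $\rho_{s, N, r, t} \to \rho_{s, r, t}$ from Lemma \ref{LEM:Gibbs}\,(ii), and the continuity of the untruncated measure $\rho_{s, r, t}$ along decreasing sequences of $L^2$-neighborhoods of compact sets.

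I begin with two preliminary reductions. The $L^2$-conservation of \eqref{NLS5} together with the cutoff $\ind_{\{\|v\|_{L^2}\le r\}}$ appearing in $\rho_{s, r, t}$ allows me to assume $A \subset \{\|v\|_{L^2}\le r\}$. Next, since $\rho_{s, r, t}$ is a Borel probability measure on the Polish space $H^{s - \frac 12 -\eps}(\T)$ and, for $\eps > 0$ sufficiently small, the embedding $H^{s-\frac 12 -\eps}(\T) \hookrightarrow L^2(\T)$ is compact, every $H^{s-\frac 12-\eps}$-compact subset is compact in $L^2(\T)$. By inner regularity of Borel probability measures on Polish spaces, it therefore suffices to establish the claim for sets $A$ that are compact in $L^2(\T)$.

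Fix such a compact set $A$ and arbitrary $\eta, \g > 0$. Lemma \ref{LEM:approx} produces $N_1 \in \N$ with $\Psi(t)(A) \subset \Psi_N(t)(A + B_\eta)$ for all $N \ge N_1$, where $B_\eta$ denotes the $L^2$-ball of radius $\eta$ centered at the origin. Lemma \ref{LEM:Gibbs}\,(ii) then furnishes $N_2 \in \N$ such that $|\rho_{s, N, r, t}(B) - \rho_{s, r, t}(B)| < \g$ for every measurable $B \subset L^2(\T)$ and every $N \ge N_2$. Combining these two facts with the truncated growth estimate \eqref{meas3}, for $N \ge \max(N_1, N_2)$ I obtain
\begin{align*}
\rho_{s, r, t}(\Psi(t)(A))
&\le \rho_{s, r, t}(\Psi_N(t)(A + B_\eta)) \\
&\le \rho_{s, N, r, t}(\Psi_N(t)(A + B_\eta)) + \g \\
&\le C(t, r)\,\rho_{s, N, r, t}(A + B_\eta) + \g \\
&\le C(t, r)\,\rho_{s, r, t}(A + B_\eta) + (1 + C(t, r))\g.
\end{align*}

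To close the argument, I send $\g \to 0$ and then $\eta \to 0$. The latter limit uses continuity of the finite measure $\rho_{s, r, t}$ along the decreasing family $\{A + B_\eta\}_{\eta > 0}$: since $A$ is compact (hence closed) in $L^2(\T)$, one has $\bigcap_{\eta > 0}(A + B_\eta) = A$, so $\rho_{s, r, t}(A + B_\eta) \downarrow \rho_{s, r, t}(A)$. This yields $\rho_{s, r, t}(\Psi(t)(A)) \le C(t, r)\,\rho_{s, r, t}(A)$ for compact $A$, and then for arbitrary measurable $A$ by another appeal to inner regularity. The main subtlety lies in the interplay between the two topologies at stake --- the approximation Lemma \ref{LEM:approx} and the fattening $A + B_\eta$ are naturally phrased in $L^2(\T)$, while $\rho_{s, r, t}$ sits on the larger space $H^{s - \frac 12 -\eps}(\T)$ --- so one must be careful to arrange inner regularity with respect to $L^2$-compact sets in order to legitimately invoke the continuity of measure in the final limit $\eta \to 0$.
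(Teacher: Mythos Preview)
Your proof is correct and matches the approach the paper indicates (deferring details to \cite[Lemma~6.10]{OTz1}): reduce to compact $A$, combine Lemmas~\ref{LEM:approx} and~\ref{LEM:Gibbs}\,(ii) with the truncated bound~\eqref{meas3}, then let $\gamma,\eta\to 0$. One point in your final sentence deserves care: passing from compact to general measurable $A$ ``by another appeal to inner regularity'' works only if you approximate $\Psi(t)(A)$ from inside by compacta $K$ and pull back via the continuous inverse flow $\Psi(0,t)$ to get compact $\Psi(0,t)(K)\subset A$, since approximating $A$ itself from within gives no a priori control on $\rho_{s,r,t}\big(\Psi(t)(A\setminus K_n)\big)$.
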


This lemma follows from 
the approximation properties of $\Psi_N(t)$ to $\Psi(t)$ (Lemma \ref{LEM:approx})
and $\rho_{s, N, r, t}$ to $\rho_{s, r, t}$ (Lemma \ref{LEM:Gibbs}\,(ii))
along with some limiting argument.
See \cite[Lemma 6.10]{OTz1}
for the details of the proof.

Once we have Lemma \ref{LEM:meas4}, 
the proof of Theorem \ref{THM:1} follows
just as in \cite{OTz1}.
We present its proof for the convenience of readers.
Recall that in view of 
\eqref{nonlin2} and 
 Lemmas \ref{LEM:gauss}, 
that it suffices to prove that $\mu_s$
is quasi-invariant under $\Psi(t)$, i.e.~under the dynamics of \eqref{NLS5}.

Fix $t \in \R$. Let $A \subset L^2(\T)$ be a measurable set 
such that $\mu_s(A) = 0$.
Then, for any $r > 0$, we have 
\[\mu_{s, r}(A) = 0.\]

\noi
By the mutual absolute continuity 
 of $\mu_{s, r}$ and $\rho_{s, r, t}$, 
 we obtain
\[\rho_{s, r, t}(A) = 0\]

\noi
 for any $r > 0$.
Then, by Lemma \ref{LEM:meas4}, we have
\[\rho_{s, r, t}(\Psi(t) (A)) = 0.\]

\noi
By invoking the mutual absolute continuity 
 of $\mu_{s, r}$ and $\rho_{s, r, t}$ once again, 
 we have
\[\mu_{s, r}(\Psi(t) (A)) = 0.\]

\noi
Then, the dominated convergence theorem yields
\[\mu_{s}\big(\Psi(t) (A)\big) 
= \lim_{r \to \infty} 
\mu_{s, r}\big(\Psi(t) (A)\big) = 0.\]

\noi
This proves  Theorem \ref{THM:1},
assuming Proposition \ref{PROP:energy}.
In the next section, we implement an infinite iteration 
of normal form reductions and prove the improved energy estimate
(Proposition \ref{PROP:energy}).

\section{Proof of Proposition \ref{PROP:energy}: Normal form reductions}
\label{SEC:NF}

In this section, we present the proof of Proposition \ref{PROP:energy}
by implementing an infinite iteration scheme 
of normal form reductions.
This procedure allows us to construct an infinite sequences of correction terms
and thus build  the desired modified energies $\EE_{N, t}(v)$
and $\EE_{t}(v)$ in \eqref{energy6}.

Our main goal is to obtain an effective estimate on 
the growth of the $H^s$-norm of a solution $v$ to the truncated equation \eqref{NLS7}, 
independent of $N \in \N$.
For simplicity of presentation, however, we work on the equation \eqref{NLS5}
 without the frequency cutoff $\ind_{|n|\leq N}$
in the following.
We point out that the same normal form reductions
and estimates hold
for the truncated equation \eqref{NLS7}, uniformly in $N \in \N$, 
with  straightforward modifications:
(i) set 
$\ft v_n = 0$ for all $|n| >N$
and 
(ii) the multilinear forms for \eqref{NLS7}
are obtained by inserting 
the frequency cutoff $\ind_{|n|\leq N}$ in appropriate places.\footnote{Using the bi-trees introduced
in Subsection \ref{SUBSEC:tree} below, it follows
from \eqref{NLS7} that 
we simply need to insert the frequency cutoff $\ind_{|n^{(j)}|\leq N}$
on the parental frequency $n^{(j)}$ assigned to each non-terminal node $a \in \TT^0$.}
In the following, we introduce multilinear forms
such as 
$\NN_0^{(j)}$, $\NN_1^{(j)}$, and $\RR^{(j)}$
for the untruncated equation \eqref{NLS5}.
With a small modification, 
these multilinear forms give rise to
$\NN_{0, N}^{(j)}$, $\NN_{1, N}^{(j)}$, and $\RR_N^{(j)}$, 
$N \in \N$, 
for the truncated equation \eqref{NLS7},
appearing in Proposition \ref{PROP:energy}.
See Subsection \ref{SUBSEC:finite}.

\subsection{First few steps of normal form reductions}
\label{SUBSEC:41}

In the following, we describe the first few steps
of normal form reductions.
We keep the following discussion only at a formal level
since its purpose is to show the complexity of the problem
and the necessity of effective book-keeping notations 
that we introduce in the next subsection.
We will present the full procedure in Subsections \ref{SUBSEC:NF} and \ref{SUBSEC:error}.

Let $v \in C(\R; H^\infty(\T))$
be a global solution to \eqref{NLS5}.\footnote{While  we work 
with  \eqref{NLS5} without a frequency cutoff in the following, 
it follows from the uniform boundedness of the frequency truncation operator $\P_{\leq N}$
that our argument and estimates also hold for \eqref{NLS7}, uniformly in  $N \in \N$.
Noting that  any solution to \eqref{NLS7} (for some $N \in \N$) is smooth, 
the following computation can be easily justified for solutions to \eqref{NLS7}.
}
With $\phi(\bar n)$ and $\G(n)$ as in~\eqref{phiX} and~\eqref{Gam1}, 
we have\footnote{Recall our convention of using $v_n$ to denote the Fourier coefficient $\ft v_n$.}
\begin{align}
  \frac{d}{dt}\bigg(\frac{1}{2} \|v(t) \|_{H^s}^2\bigg)
& = - \Re i 
\sum_{n \in \Z} \sum_{\G(n)}
\jb{n}^{2s}
e^{ - i \phi(\bar n) t}   v_{n_1} \cj{ v_{n_2}}  v_{n_3} \cj{ v_n}\notag\\
&  = : \NN^{(1)}(v)(t).
\label{XN^0}
\end{align}

\noi
In view of Lemma \ref{LEM:phase} with \eqref{Gam1}, 
we have $|\phi(\bar n)| \geq 1 $
in the summation above.
Then, 
by performing a normal form reduction, 
namely, 
differentiating by parts as in  \eqref{P2}, 
we obtain
\begin{align}
\NN^{(1)}(v)(t)
& = \Re \frac{d}{dt} \bigg[
\sum_{n \in \Z}\sum_{\G(n)}
\frac{e^{ - i \phi(\bar n) t}}{ \phi(\bar n)} \jb{n}^{2s}  v_{n_1} \cj{ v_{n_2}}  v_{n_3} \cj{ v_n}\bigg]\notag \\
& \hphantom{X}
- \Re 
\sum_{n \in \Z}\sum_{\G(n)}
\frac{e^{ - i \phi(\bar n) t}}{ \phi(\bar n)} \jb{n}^{2s} 
\dt \big( v_{n_1} \cj{ v_{n_2}}  v_{n_3} \cj{ v_n}\big) \notag\\
& = \Re \frac{d}{dt} \bigg[
\sum_{n \in \Z}\sum_{\G(n)}
\frac{e^{ - i \phi(\bar n) t}}{ \phi(\bar n)} \jb{n}^{2s}  v_{n_1} \cj{ v_{n_2}}  v_{n_3} \cj{ v_n}\bigg]\notag \\
& \hphantom{X}
-  \Re  
\sum_{n \in \Z}\sum_{\G(n)}
\frac{e^{ - i \phi(\bar n) t}}{ \phi(\bar n)} \jb{n}^{2s} 
\Big\{ \RR(v)_{n_1}
\cj{ v_{n_2}}  v_{n_3} \cj{ v_n} \notag\\
& \hphantom{XXXXX}
 +  v_{n_1} \cj{\RR(v)_{n_2}}
  v_{n_3} \cj{ v_n}
+  v_{n_1}
\cj{ v_{n_2}}  \RR(v)_{n_3} \cj{ v_n}
 +  v_{n_1}\cj{ v_{n_2}}  v_{n_3} 
 \cj{\RR(v)_{n}}\Big\}
 \notag\\
& \hphantom{X}
-  \Re  
\sum_{n \in \Z}\sum_{\G(n)}
\frac{e^{ - i \phi(\bar n) t}}{ \phi(\bar n)} \jb{n}^{2s} 
\Big\{ \NN(v)_{n_1}
\cj{ v_{n_2}}  v_{n_3} \cj{ v_n} \notag\\
& \hphantom{XXXXX}
 +  v_{n_1} \cj{\NN(v)_{n_2}}
  v_{n_3} \cj{ v_n}
+  v_{n_1}
\cj{ v_{n_2}}  \NN(v)_{n_3} \cj{ v_n}
 +  v_{n_1}\cj{ v_{n_2}}  v_{n_3} 
 \cj{\NN(v)_{n}}\Big\}
 \notag\\
& =: \dt \NN_0^{(2)}(v)(t) +  \RR^{(2)}(v)(t) + \NN^{(2)}(v)(t).
\label{XN^1}
\end{align}

\noi
In the second equality, we
applied the product rule 
and  used the equation \eqref{NLS5}
to replace  $\dt  v_{n_j}$ (and  $\dt  v_n$, respectively) by 
the resonant part $\RR(v)_{n_j}$ 
(and $\RR(v)_{n}$, respectively) and the non-resonant part $\NN(v)_{n_j}$
(and $\NN(v)_{n}$, respectively).

As we see below, 
we can estimate  the boundary term $\NN_0^{(2)}$
and the contribution  $\RR^{(2)}$ from the resonant part
in a straightforward manner.

\begin{lemma}
\label{LEM:N^2_0}
Let $\NN^{(2)}_0$ and  $\RR^{(2)}$
be as in \eqref{XN^1}.
Then, we have
\begin{align*} 
| \NN^{(2)}_0(v)| 
& \les   \|v\|_{L^2}^4,\\
| \RR^{(2)}(v)| 
& \les  \|v\|_{L^2}^6.
\end{align*}

\end{lemma}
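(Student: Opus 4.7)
The plan is to derive both bounds from a single divisor estimate on the kernel $\jb{n}^{2s}/\phi(\bar n)$. By Lemma \ref{LEM:phase} and the hypothesis $s < 1$, I would observe that $\jb{n}^{2s} \leq \jb{n}^2 \lesssim n_1^2 + n_2^2 + n_3^2 + n^2 + 2(n_1+n_3)^2$, the last being the third factor of $\phi(\bar n)$, which is bounded below by $1$ on $\G(n)$ (if $n=0$ then the constraints $n_1 \ne n$, $n_3 \ne n$ force $n_1^2 + n_3^2 \geq 2$, and otherwise $n^2 \geq 1$). This yields the pointwise bound
\[
\frac{\jb{n}^{2s}}{|\phi(\bar n)|} \lesssim \frac{1}{|n_1-n_2||n_1-n|},
\]
with both denominators nonzero integers on $\G(n)$.

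For $\NN^{(2)}_0$, by Parseval I would write $\NN^{(2)}_0(v) = \Re \langle K(v,v,v), v \rangle_{L^2}$, where $K(v,v,v)$ has Fourier coefficients
\[
\widehat{K(v,v,v)}_n = \sum_{\G(n)} \frac{e^{-i\phi(\bar n)t}}{\phi(\bar n)}\jb{n}^{2s}\, v_{n_1} \cj{v_{n_2}} v_{n_3},
\]
so Cauchy-Schwarz reduces matters to proving $\|K(v,v,v)\|_{L^2} \lesssim \|v\|_{L^2}^3$. For fixed $n$, I would parametrize $\G(n)$ by $k := n_1 - n_2$ and $m := n_1 - n$ (both nonzero integers), so that $(n_1, n_2, n_3) = (n+m,\, n+m-k,\, n-k)$, and apply Cauchy-Schwarz in $(k,m)$ to obtain
\[
|\widehat{K(v,v,v)}_n|^2 \lesssim \bigg(\sum_{k,m\neq 0} \frac{1}{k^2 m^2}\bigg) \sum_{k,m \in \Z} |v_{n+m}|^2 |v_{n+m-k}|^2 |v_{n-k}|^2.
\]
The first factor is a finite absolute constant. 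Summing in $n$ and noting that $(n,k,m) \mapsto (n+m,\, n+m-k,\, n-k)$ is a bijection of $\Z^3$ with unit Jacobian, the remaining triple sum equals $\|v\|_{L^2}^6$. This delivers $\|K(v,v,v)\|_{L^2} \lesssim \|v\|_{L^2}^3$, hence the claim for $\NN^{(2)}_0$.

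For $\RR^{(2)}$ the kernel is identical but with one of the four factors $v_{n_j}$ replaced by $\RR(v)_{n_j} = i |v_{n_j}|^2 v_{n_j}$; since the extra factor $|v_{n_j}|^2$ is uniformly controlled by $\|v\|_{\ell^\infty_n}^2 \leq \|v\|_{L^2}^2$, pulling it out of the sum immediately yields $|\RR^{(2)}(v)| \lesssim \|v\|_{L^2}^2 \cdot \|v\|_{L^2}^4 = \|v\|_{L^2}^6$, with the same argument at each of the four positions where $\RR$ can act. The main conceptual point is identifying the change of variables that turns the trilinear multiplier into the convergent divisor sum $\sum k^{-2} m^{-2}$; the analogous step for the higher-degree multilinear forms $\NN^{(j)}_0, \NN^{(j)}_1, \RR^{(j)}$ produced by further normal form reductions is what will force the combinatorial bookkeeping via ordered bi-trees introduced later in Section \ref{SEC:NF}.
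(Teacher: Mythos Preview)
Your proof is correct and follows essentially the same route as the paper, which defers to Lemma~\ref{LEM:N^J+1_0} with $J=1$: both arguments rest on the pointwise kernel bound $\jb{n}^{2s}/|\phi(\bar n)| \lesssim |(n-n_1)(n-n_3)|^{-1}$ from Lemma~\ref{LEM:phase} (using $s<1$) followed by Cauchy--Schwarz, and your explicit $(k,m)$ change of variables is simply a concrete realization of the sum $\sum_{n_1,n_3\ne n}|(n-n_1)(n-n_3)|^{-2}<\infty$ appearing in \eqref{sum1a}. Your treatment of $\RR^{(2)}$ via $|v_{n_j}|^2\le\|v\|_{L^2}^2$ is equivalent to the paper's use of $\ell^2_n\subset\ell^6_n$.
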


\noi
See Lemma \ref{LEM:N^J+1_0}
(with $J = 1$)
for the proof.

It remains  to treat
 the last term $\NN^{(2)}$ in \eqref{XN^1}.
For an expository purpose, 
we only consider  the first term among the four terms in $\NN^{(2)}$
in the following.
A full consideration is given in Subsection \ref{SUBSEC:NF}
once we introduce proper notations in the next subsection.
With~\eqref{NLS5}, we have
\begin{align}
 \NN^{(2)}(v)(t)
&  =    \Re  i 
\sum_{n \in \Z}\sum_{\G(n)}
\sum_{\substack{n_1 = m_1 - m_2 + m_3\\n_1 \ne m_1, m_3}}
\frac{e^{ - i (\phi_1 + \phi_2) t}}{ \phi_1} \jb{n}^{2s} 
(  v_{m_1}\cj{ v_{m_2}}  v_{m_3} )
\cj{ v_{n_2}}  v_{n_3} \cj{ v_n}, 
\label{XN^1_2}
\end{align}

\noi
where 
$\phi_1 = \phi(\bar n)$ and 
$\phi_2 = \phi(m_1, m_2, m_3, n_1) = m_1^4 - m_2^4 + m_3^4 - n_1^4$
denote the phase functions
from the first and second ``generations''.\footnote{In the next subsection, 
we make a precise definition of what we mean by ``generation''.}
It turns out  that we can not establish a direct 6-linear estimate
on  $\NN^{(2)}$ in  \eqref{XN^1_2}.

We divide the frequency space in \eqref{XN^1_2} into 
\begin{equation} \label{C1}
C_1 = \big\{ |\phi_1 + \phi_2| \leq 6^3\big\} 
\end{equation}

\noi
and its complement $C_1^c$.\footnote{Clearly, the number $6^3$ in \eqref{C1} 
does not make any difference at this point.
However, we insert it to match with \eqref{Cj}.
See also \eqref{C2}.}
We then write $\NN^{(2)}$ as 
\begin{equation}
\NN^{(2)} = \NN^{(2)}_1 + \NN_2^{(2)},
\label{N2}
\end{equation}

\noi
where $\NN^{(2)}_1$ is the restriction of $\NN^{(2)}$
onto $C_1$
and
$\NN_2^{(2)} := \NN^{(2)} - \NN^{(2)}_1$.
On the one hand, 
we can estimate the contribution $\NN^{(2)}_1$ from $C_1$
in an effective manner (Lemma \ref{LEM:N^J+1_1} with $J = 1$)
thanks to the frequency restriction on $C_1$.
On the other hand, 
$\NN_2^{(2)}$ can not be handled as  it is and 
thus we apply the second step of  normal form reductions
to $\NN^{(2)}_2$. 
After differentiation by parts with \eqref{NLS5} as in \eqref{XN^1}, 
we arrive at 
\begin{align*}
\NN^{(2)}_2 (v)
& = \dt \NN^{(3)}_0(v) + \RR^{(3)}(v) + \NN^{(3)}(v), 
\end{align*}

\noi
where 
$ \NN^{(3)}_0$ is a  6-linear form
and 
$\RR^{(3)}$ and  $\NN^{(3)}$ 
are 8-linear forms, corresponding to 
the contributions
from the resonant part $\RR(v)$ and the non-resonant part
$\NN(v)$ upon the substitution of \eqref{NLS5}.
See \eqref{N^2} below for the precise computation.

As in the previous step, 
we can estimate  $\NN^{(3)}_0$ and $\RR^{(3)}$ in a straightforward manner
(Lemma~\ref{LEM:N^J+1_0} with $J = 2$).
On the other hand, 
we can not estimate $\NN^{(3)}$ as it is
and hence we need to split it as 
\begin{equation} 
\NN^{(3)} = \NN^{(3)}_1 + \NN^{(3)}_2,
\label{N3}
\end{equation}

\noi
where $\NN^{(3)}_1$ is the restriction of $\NN^{(3)}$
onto 
\begin{equation} \label{C2}
C_2 = \big\{ |\phi_1 + \phi_2 + \phi_3| \le 8^3\big\} 
\end{equation}

\noi
and 
$\NN_2^{(3)} := \NN^{(3)} - \NN^{(3)}_1$.
Here, $\phi_j$, $j = 1, 2, 3$, denotes
the phase function from the $j$th ``generation''.
As we see below, 
 $\NN^{(3)}_1$ satisfies a good 8-linear estimate (Lemma \ref{LEM:N^J+1_1} with $J = 2$) 
thanks to the frequency restriction on $C_2$.
On the other hand, 
$\NN_2^{(3)}$ can not be handled as  it is and 
thus we apply the third step of  normal form reductions
to $\NN^{(3)}_2$. 
In this way, we iterate  normal form reductions in an indefinite manner.

As we iteratively apply normal form reductions, 
the degrees of the multilinear terms increase linearly.
After $J$ steps, 
we obtain 
the multilinear terms 
$\NN_0^{(J+1)}$
of degree $2J+2$ and $\RR^{(J+1)}$ and $\NN^{(J+1)}$
of degree $2J+4$.
See \eqref{N^J+1}.
As in the first and second steps described above, 
we also divide $\NN^{(J+1)}$ into ``good'' and ``bad'' parts
and apply another normal form reduction
to the bad part of degree $2J+4$, where time differentiation
can fall on any of the $2J+4$ factors.
An easy computation shows that the number of terms 
grows factorially (see~\eqref{cj1}) 
and hence we need to introduce an effective way to handle this combinatorial complexity.
In the next subsection, 
we introduce indexing notation by bi-trees,
which allows us to denote a factorially growing number of multilinear terms
in a concise manner.

\subsection{Notations: index by ordered bi-trees}
\label{SUBSEC:tree}

In \cite{GKO}, the first author with Guo and Kwon implemented
an infinite iteration of normal form reductions
to study the cubic NLS on $\T$,
where differentiation by parts was applied to the evolution equation satisfied by 
the interaction representation.
In \cite{GKO}, (ternary) trees and ordered trees played 
an important role 
for indexing various multilinear terms and frequencies arising in the general steps of the Poincar\'e-Dulac normal form reductions.

Our main goal here is to implement an infinite iteration scheme
of normal form reduction applied to the $H^s$-energy functional\footnote{More precisely, 
to the evolution equation satisfied by the $H^s$-energy functional.}
 $ \| v(t)\|_{H^s}^2$, as we saw above.
 In particular,  we need tree-like structures that grow in two directions.
For this purpose,  we introduce the notion of  bi-trees and ordered bi-trees
in the following.
Once we replace trees and ordered trees
by bi-trees and ordered bi-trees,
other related notions can be defined
in a similar manner as in \cite{GKO}
with certain differences to be noted.

\begin{definition} \label{DEF:tree1} \rm
Given a partially ordered set $\TT$ with partial order $\leq$, 
we say that $b \in \TT$ 
with $b \leq a$ and $b \ne a$
is a child of $a \in \TT$,
if  $b\leq c \leq a$ implies
either $c = a$ or $c = b$.
If the latter condition holds, we also say that $a$ is the parent of $b$.

\end{definition}

As in \cite{GKO},
our trees in this paper 
refer to a particular subclass of usual trees with the following properties.

\begin{definition} \label{DEF:tree2} \rm
(i) A tree $\TT $ is a finite partially ordered set satisfying
the following properties:
\begin{enumerate}

\item[(a)] Let $a_1, a_2, a_3, a_4 \in \TT$.
If $a_4 \leq a_2 \leq a_1$ and  
$a_4 \leq a_3 \leq a_1$, then we have $a_2\leq a_3$ or $a_3 \leq a_2$,

\item[(b)]
A node $a\in \TT$ is called terminal, if it has no child.
A non-terminal node $a\in \TT$ is a node 
with  exactly three ordered\footnote{For example, 
we simply label the three children as $a_1, a_2$, and $a_3$
 by moving from left to right in the planar graphical representation of the tree $\TT$.
 As we see below, we assign the Fourier coefficients of the interaction representation $v$ at $a_{1}$ and $a_{3}$, 
 while we assign the complex conjugate of
 the Fourier coefficients of $v$ at the second child $a_{2}$.} children denoted by $a_1, a_2$, and $a_3$,

\item[(c)] There exists a maximal element $r \in \TT$ (called the root node) such that $a \leq r$ for all $a \in \TT$,

\item[(d)] $\TT$ consists of the disjoint union of $\TT^0$ and $\TT^\infty$,
where $\TT^0$ and $\TT^\infty$
denote  the collections of non-terminal nodes and terminal nodes, respectively.
\end{enumerate}

\smallskip

\noi
(ii) A {\it bi-tree} $\TT = \TT_1 \cup \TT_2$ is 
a disjoint union of two trees $\TT_1$ and $\TT_2$,
where the root nodes $r_j$ of $\TT_j$, $j = 1, 2$,  are joined by an edge.
A bi-tree
$\TT$ consists of the disjoint union of $\TT^0$ and $\TT^\infty$,
where $\TT^0$ and $\TT^\infty$
denote  the collections of non-terminal nodes and terminal nodes, respectively.
By convention, we assume that the root node $r_1$ of the tree $\TT_1$ is non-terminal,
while the root node $r_2$ of the tree $\TT_2$ may be terminal.

\smallskip

\noi
(iii) Given a bi-tree $\TT = \TT_1 \cup \TT_2$, 
we define a projection $\Pi_j$, $j = 1, 2$, onto  a tree
by setting 
\begin{align}
\Pi_j(\TT) = \TT_j.
\label{proj1}
\end{align}

\noi
In Figure \ref{FIG:1}, 
$\Pi_1(\TT)$ corresponds to the tree on the left under the root node $r_1$, 
while 
$\Pi_2(\TT)$ corresponds to the tree on the right under the root node $r_2$.

\end{definition}

Note that the number $|\TT|$ of nodes in a bi-tree $\TT$ is $3j+2$ for some $j \in \mathbb{N}$,
where $|\TT^0| = j$ and $|\TT^\infty| = 2j + 2$.
Let us denote  the collection of trees in the $j$th generation 
(namely,  with $j$ parental nodes) by $BT(j)$, i.e.
\begin{equation*}
BT(j) := \{ \TT : \TT \text{ is a bi-tree with } |\TT| = 3j+2 \}.
\end{equation*}

\begin{figure}[h]
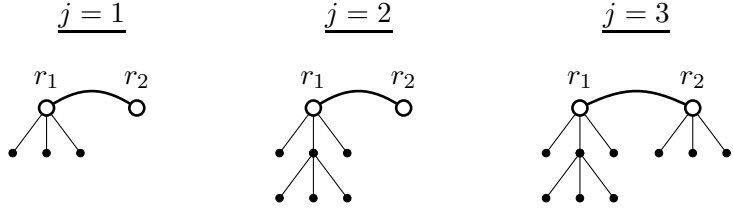

\[ \<T1>  
\qquad\qquad
\<T2> 
\qquad  \qquad
\<T3>\]

\caption{Examples of bi-trees of the $j$th generation, $j = 1, 2, 3$.}
\label{FIG:1}
\end{figure}

\vspace{3mm}

Next, we introduce the  notion of ordered bi-trees,
for which we  keep track of how a bi-tree ``grew''
into a given shape.

\begin{definition} \label{DEF:tree3} \rm
(i) We say that a sequence $\{ \TT_j\}_{j = 1}^J$  is a chronicle of $J$ generations, 
if 
\begin{enumerate}
\item[(a)] $\TT_j \in BT(j)$ for each $j = 1, \dots, J$,
\item[(b)]  $\TT_{j+1}$ is obtained by changing one of the terminal
nodes in $\TT_j$ into a non-terminal node (with three children), $j = 1, \dots, J - 1$.
\end{enumerate}

\noi
Given a chronicle $\{ \TT_j\}_{j = 1}^J$ of $J$ generations,  
we refer to $\TT_J$ as an {\it ordered bi-tree} of the $J$th generation.
We denote the collection of the ordered trees of the $J$th generation
by $\mathfrak{BT}(J)$.
Note that the cardinality of $\mathfrak{BT}(J)$ is given by 
$ |\mathfrak{BT}(1)| = 1$ and 
\begin{equation} 
\label{cj1}
 |\mathfrak{BT}(J)| = 4\cdot 6 \cdot 8 \cdot \cdots \cdot 2J 
 = 2^{J-1}   \cdot J!=: c_J,
 \quad J \geq 2.
 \end{equation}

\smallskip

\noi
(ii) Given an ordered bi-tree $\TT_J \in \mathfrak{BT}(J)$ as above, 
we define  projections $\pi_j$, $j = 1, \dots, J-1$, 
onto the previous generations
by setting
\begin{align*}
 \pi_j(\TT_J) = \TT_j \in \mathfrak{BT}(j).
\end{align*}

\end{definition}

We stress that the notion of ordered bi-trees comes with associated chronicles.
For example, 
given two ordered bi-trees $\TT_J$ and $\wt{\TT}_J$
of the $J$th generation, 
it may happen that $\TT_J = \wt{\TT}_J$ as bi-trees (namely as planar graphs) 
according to Definition \ref{DEF:tree2},
while $\TT_J \ne \wt{\TT}_J$ as ordered bi-trees according to Definition \ref{DEF:tree3}.
In the following, when we refer to an ordered bi-tree $\TT_J$ of the $J$th generation, 
it is understood that there is an underlying chronicle $\{ \TT_j\}_{j = 1}^J$.

\smallskip

Given a bi-tree $\TT$, 
we associate each terminal node $a \in \TT^\infty$ with the Fourier coefficient (or its complex conjugate) of the interaction representation 
$v$ and sum over all possible frequency assignments.
In order to do this, we introduce index functions, 
assigning integers to {\it all} the nodes in $\TT$ in a consistent manner.

\begin{definition} \label{DEF:tree4} \rm
(i) Given  a bi-tree $\TT = \TT_1\cup \TT_2$, 
we define an index function ${\bf n}: \TT \to \mathbb{Z}$ such that
\begin{itemize}

\item[(a)] $n_{r_1} = n_{r_2}$, where $r_j$ is the root node of the tree $\TT_j$, $j = 1, 2$,

\item[(b)] $n_a = n_{a_1} - n_{a_2} + n_{a_3}$ for $a \in \TT^0$,
where $a_1, a_2$, and $a_3$ denote the children of $a$,

\item[(c)] $\{n_a, n_{a_2}\} \cap \{n_{a_1}, n_{a_3}\} = \emptyset$ for $a \in \TT^0$,

\end{itemize}

\noi
where  we identified ${\bf n}: \TT \to \mathbb{Z}$ 
with $\{n_a \}_{a\in \TT} \in \mathbb{Z}^\TT$. 
We use 
$\mathfrak{N}(\TT) \subset \mathbb{Z}^\TT$ to denote the collection of such index functions ${\bf n}$
on $\TT$.

Given $N \in \mathbb{N}$, 
we define a subcollection 
$\mathfrak{N}_N(\TT) \subset \mathfrak{N}(\TT)$ 
by imposing $|n_a| \leq N$ for any $a \in \TT$.
We also define
$\mathfrak{N}_N^0(\TT) \subset \mathfrak{N}(\TT)$ 
by imposing $|n_a| \leq N$ for any non-terminal nodes $a \in \TT^0$.

\smallskip

\noi
(ii) Given a tree $\TT$, we also define 
 an index function ${\bf n}: \TT \to \mathbb{Z}$ 
 by omitting the condition (a)
 and denote by 
 $\mathfrak{N}(\TT) \subset \mathbb{Z}^\TT$  the collection of index functions ${\bf n}$
 on $\TT$.

\end{definition}

\begin{remark} \label{REM:terminal}
\rm 
(i) In view of the consistency condition (a), 
we can refer to $n_{r_1} = n_{r_2}$
as the frequency  at the root node without ambiguity.
We shall  simply denote it by $n_r$ in the following.

\smallskip

\noi
(ii) 
Just like  index functions  for (ordered) trees considered in \cite{GKO}, 
an index function ${\bf n} = \{n_a\}_{a\in\TT}$ for a bi-tree $\TT$ is completely determined
once we specify the values $n_a \in \Z$ for all the  terminal nodes $a \in \TT^\infty$.
An index function $\bn$
for a bi-tree $\TT = \TT_1 \cup \TT_2$
is basically a pair $(\bn_1, \bn_2)$
of index functions $\bn_j$ for the trees $\TT_j$, $j = 1, 2$, (omitting
the non-resonance condition in \cite[Definition 3.5 (iii)]{GKO}),
satisfying the consistency condition (a): $n_{r_1} = n_{r_2}$.

\smallskip

\noi
(iii) Given a bi-tree $\TT \in \mathfrak{BT}(J)$ and $n \in \Z$, consider
 the summation of all possible frequency assignments
 $\{ \bn \in \mathfrak{N}(\TT): n_r = n\}$.
While $|\TT^\infty| = 2J + 2$, 
there are $2J$ free variables in this summation.
Namely, the condition $n_r = n$ reduces two summation variables.
It is easy to see this by separately considering
the cases $\Pi_2(\TT) = \{r_2\}$
and $\Pi_2(\TT) \ne \{r_2\}$.
\end{remark}

\medskip

Given an ordered bi-tree 
$\TT_J$ of the $J$th generation with a chronicle $\{ \TT_j\}_{j = 1}^J$ 
and associated index functions ${\bf n} \in \mathfrak{N}(\TT_J)$,
we would like to keep track of the  ``generations'' of frequencies.
In the following,  we use superscripts to denote such generations of frequencies.

Fix ${\bf n} \in \mathfrak{N}(\TT_J)$.
Consider $\TT_1$ of the first generation.
Its nodes consist of the two root nodes $r_1$, $r_2$, 
and the children $r_{11}, r_{12}, $ and $r_{13}$ of the first root node $r_1$. 
See Figure \ref{FIG:1}.
We define the first generation of frequencies by
\[\big(n^{(1)}, n^{(1)}_1, n^{(1)}_2, n^{(1)}_3\big) :=(n_{r_1}, n_{r_{11}}, n_{r_{12}}, n_{r_{13}}).\]

\noi
From Definition \ref{DEF:tree4}, we have
\begin{equation*}
n^{(1)}  = n_{r_2}, \quad  n^{(1)} = n^{(1)}_1 - n^{(1)}_2 + n^{(1)}_3, \quad n^{(1)}_2\ne n^{(1)}_1, n^{(1)}_3.
\end{equation*}

Next, we construct  an ordered bi-tree $\TT_2$ of the second generation 
from $\TT_1$ by
changing one of its terminal nodes $a \in \TT^\infty_1 = \{ r_2, r_{11}, r_{12}, r_{13}\}$ 
into a non-terminal node.
Then, we define
the second generation of frequencies by setting
\[\big(n^{(2)}, n^{(2)}_1, n^{(2)}_2, n^{(2)}_3\big) :=(n_a, n_{a_1}, n_{a_2}, n_{a_3}).\]

\noi
Note that  we have $n^{(2)} = n^{(1)}$ or $n_k^{(1)}$ for some $k \in \{1, 2, 3\}$, 
\begin{equation*}
 n^{(2)} = n^{(2)}_1 - n^{(2)}_2 + n^{(2)}_3, \quad n^{(2)}_2\ne n^{(2)}_1, n^{(2)}_3,
\end{equation*}

\noi
where the last identities follow from Definition \ref{DEF:tree4}.
This extension of $\TT_1 \in \mathfrak{BT}(1)$ to $\TT_2\in \mathfrak{BT}(2)$
corresponds to introducing a new set of frequencies
after the first differentiation by parts,
where  the time derivative may fall on any of $v_n$ and $v_{n_j}$, $j = 1, 2, 3$.\footnote{The complex conjugate signs on  $v_n$ and $v_{n_j}$ do not play any significant role.
Hereafter,  we drop the complex conjugate sign, 
when it does not play any important role.}

In general, we construct  an ordered bi-tree $\TT_j$ 
of the $j$th generation from $\TT_{j-1}$ by
changing one of its terminal nodes $a  \in \TT^\infty_{j-1}$
into a non-terminal node.
Then, we define
the $j$th generation of frequencies by
\[\big(n^{(j)}, n^{(j)}_1, n^{(j)}_2, n^{(j)}_3\big) :=(n_a, n_{a_1}, n_{a_2}, n_{a_3}).\]

\noi
As before, it follows from Definition \ref{DEF:tree4} that 
\begin{equation*} 
 n^{(j)} = n^{(j)}_1 - n^{(j)}_2 + n^{(j)}_3, \quad n^{(j)}_2\ne n^{(j)}_1, n^{(j)}_3.
\end{equation*}

\noi
 Given an ordered bi-tree $\TT$, we denote by $B_j = B_j(\TT)$
the set of all possible frequencies in the $j$th generation.

We denote by  $\phi_j$  the 
phase function 
for the frequencies introduced at the $j$th generation:
\begin{align*}
\phi_j & =  \phi_j \big( n^{(j)},  n^{(j)}_1, n^{(j)}_2, n^{(j)}_3\big)
:=   \big(n_1^{(j)}\big)^4 - \big(n_2^{(j)}\big)^4 + \big(n_3^{(j)}\big)^4
- \big(n^{(j)}\big)^4.
\end{align*}

\noi
Note that we have $|\phi_1| \geq 1$ in view of Definition \ref{DEF:tree4} and Lemma \ref{LEM:phase}.
We also denote by $\mu_j$
 the  phase function corresponding to the usual cubic NLS (at the $j$th generation):
\begin{align*}
\mu_j & =  \mu_j \big( n^{(j)},  n^{(j)}_1, n^{(j)}_2, n^{(j)}_3\big)
:=   \big(n_1^{(j)}\big)^2 - \big(n_2^{(j)}\big)^2 + \big(n_3^{(j)}\big)^2
- \big(n^{(j)}\big)^2\notag \\
& =-2 \big(n^{(j)} - n_1^{(j)}\big) \big(n^{(j)} - n_3^{(j)}\big).
\end{align*}

\noi
Then, by Lemma \ref{LEM:phase}, we have 
\begin{align}
|\phi_j| \sim (n^{(j)}_\text{max})^2
\cdot| \big(n^{(j)} - n_1^{(j)}\big) \big(n^{(j)} - n_3^{(j)}\big)|
\sim (n^{(j)}_\text{max})^2 \cdot |\mu_j|,
\label{MU2}
\end{align}

\noi
where $n^{(j)}_\text{max}: = \max\big(|n^{(j)}|, 
|n_1^{(j)}|, |n_2^{(j)}|, |n_3^{(j)}| \big)$.

 Given  an ordered bi-tree $\TT \in \mathfrak{BT}(J)$ for some $J \in \N$, 
define $C_j \subset \mathfrak{N}(\TT)$ by 
\begin{equation} \label{Cj}
C_j = \big\{ |\wt{\phi}_{j+1}| \leq  (2j+4)^3\big\} , 
\end{equation}

\noi
where $\wt{\phi}_j$  is defined by 
\begin{align}
 \wt{\phi}_j = \sum_{k = 1}^j \phi_k.
\label{Cj2}
\end{align}

\noi
In Subsection \ref{SUBSEC:NF}, 
we perform normal form reductions in an iterative manner.
At each step, 
we divide multilinear forms into
``nearly resonant'' part (corresponding to the frequencies belonging to $C_j$) and highly non-resonant part
(corresponding to the frequencies belonging to $C_j^c$)
and apply a normal form reduction only to the highly non-resonant part.

\subsection{Arithmetic lemma}
\label{SUBSEC:sum}

As we see in the next subsection, 
normal form reductions generate multilinear forms of higher and higher degrees,
where we need to sum over all possible
ordered bi-trees in  $\mathfrak{BT}(J)$.
The main issue is then to control   the rapidly growing
 cardinality $c_J = |\mathfrak{BT}(J)| $  defined in~\eqref{cj1}.
On the one hand,  we utilize the divisor counting estimate (see \eqref{divisor} below)
as in \cite{GKO}.
On the other hand, 
we split the argument into two parts.
The following lemma shows the heart of the matter
in the multilinear estimates presented in the next subsection.
This allows us to show that there is a sufficiently fast decay
at each step of normal form reductions.

\begin{lemma}\label{LEM:sum1}
Let $ s< 1$ and $J \in \N$.
Then, the following estimates hold:
\begin{align}
\textup{(i)}& \qquad  
\sup_{\TT_J \in \mathfrak{BT}(J)}
\sup_{n \in \Z} \sum_{\substack{{\bf n} \in \mathfrak{N}(\TT_J)\\{ n}_r = n}}
\ind_{\bigcap_{j = 1}^{J-1} C_j^c}
\frac{\jb{n}^{4s}}{|\phi_1|^{2}} 
\prod_{j = 2}^J \frac{1}{|\wt{\phi}_j|^2}
  \lesssim \frac{1}{\prod_{j = 2}^J(2j+2)^{3-}}, 
\label{sum1}\\
\textup{(ii)} & 
\qquad 
\sup_{\TT_{J+1} \in \mathfrak{BT}(J+1)}
\sup_{n \in \Z} \sum_{\substack{{\bf n} \in \mathfrak{N}(\TT_{J+1})\\{ n}_r = n}}
\ind_{(\bigcap_{j = 1}^{J-1} C_j^c) \cap C_J}
\frac{\jb{n}^{4s}}{|\phi_1|^{2}} 
\prod_{j = 2}^J \frac{1}{|\wt{\phi}_j|^2}
  \lesssim \frac{J^{3+}}{\prod_{j = 2}^J(2j+2)^{3-}}.
\label{sum2}
\end{align}

\end{lemma}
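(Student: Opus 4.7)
The plan is to carry out the summation generation by generation, from $j=1$ up to $j=J$ (and $j=J+1$ in part~(ii)), exploiting the fact that the frequencies introduced at generation $j$ enter only through the phases $\phi_k$ with $k\ge j$. In view of the telescoping $\phi_j = \wt\phi_j - \wt\phi_{j-1}$, conditioning on the frequencies of all earlier generations fixes $\wt\phi_{j-1}$, so that the sum over the two new variables at generation $j$ can be reparametrized by a one-variable integer sum in $\wt\phi_j$, with the combinatorial multiplicity at each fixed value of $\wt\phi_j$ controlled by a divisor estimate. A per-generation gain of $(2j+2)^{-3+}$ is then extracted from $|\wt\phi_j|^{-2}$ under the constraint $C_{j-1}^c$, and multiplying these gains across $j=2,\dots,J$ produces the right-hand side of \eqref{sum1}.

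For the root contribution ($j=1$), $n^{(1)}=n_r=n$ is fixed, leaving two free integer variables at the children of $r_1$, which I parametrize as $(a,b) = (n-n_1^{(1)}, n-n_3^{(1)})$; then $\mu_1 = -2ab$ and $n^{(1)}_\text{max} \gtrsim \max(|a|,|b|)$. Combining Lemma \ref{LEM:phase} with \eqref{MU2} gives $|\phi_1|^{2}\gtrsim (n^{(1)}_\text{max})^{4}|\mu_1|^{2}$, while $\jb{n}^{4s} \leq (n^{(1)}_\text{max})^{4s}$, so
\[ \frac{\jb{n}^{4s}}{|\phi_1|^{2}} \lesssim \frac{(n^{(1)}_\text{max})^{4s-4}}{|\mu_1|^{2}}. \]
For $s<1$ the factor $(n^{(1)}_\text{max})^{4s-4}$ is strictly decaying, and a standard divisor bound $d(|\mu_1|)\lesssim|\mu_1|^{0+}$ handles the multiplicity of factorizations $|\mu_1|=2|ab|$; the remaining one-variable sum $\sum_m m^{-2+}$ is absolutely convergent uniformly in $n$, yielding the $O(1)$ bound at the root.

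For a generic generation $j\ge 2$, I freeze all previously introduced frequencies so that $\wt\phi_{j-1}\in\Z$ is fixed, and sum over the two new free variables. Changing variables to $(a_j,b_j) = (n^{(j)}-n_1^{(j)}, n^{(j)}-n_3^{(j)})$ and using $\phi_j = \wt\phi_j - \wt\phi_{j-1}$ together with $|\phi_j|\sim (n^{(j)}_\text{max})^2|\mu_j|$ reparametrizes admissible $(a_j,b_j)$ by the value of the integer $\wt\phi_j$; for each fixed such value a divisor estimate bounds the number of contributing pairs by $\lesssim |\wt\phi_j|^{0+}$, so that the generation-$j$ contribution is dominated by
\[ \sum_{\substack{\wt\phi_j\in\Z\\ |\wt\phi_j|\ge (2j+2)^3}} \frac{|\wt\phi_j|^{0+}}{|\wt\phi_j|^{2}} \lesssim (2j+2)^{-3+}, \]
uniformly in the previously fixed frequencies. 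Taking the product over $j=2,\dots,J$ proves \eqref{sum1}.

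The proof of (ii) runs identically through generation $J$. The only new ingredient is at the outermost generation $J+1$, where the summand carries no $|\wt\phi_{J+1}|^{-2}$ weight, while $C_J$ restricts $\wt\phi_{J+1}$ to an integer interval of length $\lesssim (2J+4)^3$. After the same change of variables and divisor estimate at generation $J+1$, summing $\wt\phi_{J+1}$ over this interval contributes a factor $\lesssim (2J+4)^{3+}\lesssim J^{3+}$, which is the source of the $J^{3+}$ factor in \eqref{sum2}. The main technical hurdle throughout is ensuring that the $0+$ loss from divisor counting is strictly smaller than the cubic gain $(2j+2)^3$ at every generation, so that the gains compound absolutely in $j$; this is precisely what motivates the cubic threshold in the definition \eqref{Cj} of $C_j$, and is also where the restriction $s<1$ is used to absorb $\jb{n}^{4s}$ into the two surplus powers of $n^{(1)}_\text{max}$ at the root.
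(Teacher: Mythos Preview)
Your approach is essentially the paper's: peel off generations one at a time, parametrize the two new frequencies at generation $j$ by the integer value of $\wt\phi_j$, and invoke divisor counting on the factorization of $\phi_j$ to control the multiplicity. For $j=1$ the paper does not bother with a divisor bound and simply sums $\sum_{a,b\ne 0}(ab)^{-2}<\infty$ after observing $\jb{n}^{4s}/|\phi_1|^2\lesssim |(n-n_1)(n-n_3)|^{-2}$ for $s\le 1$; your version works too.

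There is, however, a genuine imprecision in your generation-$j$ step for $j\ge 2$. The divisor bound controls the number of admissible triples $(n_1^{(j)},n_2^{(j)},n_3^{(j)})$ by $|\phi_j|^{0+}$, \emph{not} $|\wt\phi_j|^{0+}$. Since $\phi_j=\wt\phi_j-\wt\phi_{j-1}$ and $|\wt\phi_{j-1}|$ is unbounded, your claimed uniformity ``in the previously fixed frequencies'' fails as stated: the per-generation sum carries an extra factor $|\wt\phi_{j-1}|^{0+}$ coming from $|\phi_j|^{0+}\le \max(|\wt\phi_{j-1}|,|\wt\phi_j|)^{0+}$. The paper handles this by cascading the $0+$ loss backward through the iteration, so that at each earlier stage $|\wt\phi_k|^{-2}$ is replaced by $|\wt\phi_k|^{-2+}$; since the resulting tail sum still produces $(2k+2)^{-3+}$, the final bound is unchanged. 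The same issue recurs in part (ii) at generation $J+1$: there the divisor count is $O(|\phi_{J+1}|^{0+})=O\big((|\wt\phi_J|+J^3)^{0+}\big)$, contributing a factor $|\wt\phi_J|^{0+}$ that again must be absorbed into the generation-$J$ sum rather than dropped. Once you propagate these $0+$ losses as the paper does, your argument is complete and coincides with the paper's.
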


Before proceeding further, let us 
recall the following arithmetic fact \cite{HW}.
Given $n \in \N$, the number $d(n)$ of the divisors of $n$
satisfies
\begin{align}
d(n) \leq C_\dl n^\dl
\label{divisor}
\end{align}

\noi
for any $\dl > 0$.	
This divisor counting estimate will be used iteratively in the following proof.

\begin{proof}[Proof of Lemma \ref{LEM:sum1}]
(i)
We first consider the case $J = 1$.
In this case, 
from Lemma \ref{LEM:phase} with $s \leq 1$,  we have
\begin{align}
\text{LHS of }\eqref{sum1}
\leq
\sup_{n \in \Z} \sum_{\substack{n_1, n_3 \in \Z\\n_1, n_3 \ne n\\|\phi_1|\geq 1}}
 \frac{\jb{n}^{4s}}{|\phi_1|^{2}}
\les \sup_{n \in \Z}  \sum_{\substack{n_1, n_3 \in \Z\\n_1, n_3 \ne n}}
\frac{1}{|(n-n_1)(n-n_3)|^{2}}
\les 1.
\label{sum1a}
\end{align}

Next,  we  consider the case $J \geq 2$.
Fix $\TT_J \in \mathfrak{BT}(J)$.
For simplicity of notations, 
we drop the supremum 
over $\TT_J \in \mathfrak{BT}(J)$ in the following
with the understanding that the implicit constants are independent
of $\TT_J \in \mathfrak{BT}(J)$.
A similar comment applies to the proof of the estimate \eqref{sum2} presented in (ii) below.

The main idea is to apply the divisor counting argument in an iterative manner.
It follows from the divisor counting estimate \eqref{divisor} 
with the factorization of $\phi_j$ (Lemma \ref{LEM:phase}) that 
for fixed  $n^{(j)}$ and $\phi_j$,
there are at most $O(|\phi_j|^{0+})$ many choices for $n^{(j)}_1$, $n^{(j)}_2$, and $n^{(j)}_3$
on $B_j$.
Also, note that $\phi_j$ is determined by $\wt{\phi}_1, \dots, \wt{\phi}_j$
and 
\begin{equation} \label{mujj}
|\phi_j| \leq \max(|\wt{\phi}_{j-1}|, |\wt{\phi}_j|).
\end{equation}

\noi
since $\phi_j = \wt{\phi}_{j}-\wt{\phi}_{j-1}$.
In the following, we apply the divisor counting argument
to sum over the frequencies in $B_J$, $B_{J-1}$, \dots, $B_2$.
From Definition \ref{DEF:tree3} (ii) and \eqref{Cj}, we have
\begin{align*}
\text{LHS of }\eqref{sum1}
& = 
 \sup_{n\in \Z}
\sum_{\substack{{\bf n} \in \mathfrak{N}(\pi_1(\TT_J))\\{ n}_r = n\\|\phi_1| \geq 1}}
\frac{\jb{n}^{4s}}{|\phi_1|^{2}} 
\sum_{\substack{\psi_2 \in \Z\\ |\psi_2| >  6^3 }} \sum_{\substack{B_2\\ \wt \phi_2 = \psi_2}} 
\frac{1}{|\psi_2|^2}
\, \cdots 
\sum_{\substack{\psi_J \in \Z\\ |\psi_J| >  (2J+2)^3 }} \sum_{\substack{B_J\\ \wt \phi_J = \psi_J}} 
\frac{1}{|\psi_J|^2}
\notag \\
\intertext{By applying the divisor counting argument in $B_J$ with \eqref{mujj}, 
we have}
& \les 
 \sup_{n \in \Z}
\sum_{\substack{{\bf n} \in \mathfrak{N}(\pi_1(\TT_J))\\{ n}_r = n\\|\phi_1| \geq 1}}
\frac{\jb{n}^{4s}}{|\phi_1|^{2}} 
\sum_{\substack{\psi_2 \in \Z\\ |\psi_2| >  6^3 }} \sum_{\substack{B_2\\ \wt \phi_2 = \psi_2}} 
\frac{1}{|\psi_2|^2} \, \cdots \notag\\
& \hphantom{XX}
\sum_{\substack{\psi_{J-1} \in \Z\\ |\psi_{J-1}| >  (2J)^3 }} \sum_{\substack{B_{J-1}\\ \wt \phi_{J-1} = \psi_{J-1}}} 
\frac{1}{|\psi_{J-1}|^2}
\sum_{\substack{\psi_J \in \Z\\ |\psi_J| >  (2J+2)^3 }} 
\frac{1}{|\psi_J|^{2}} |\psi_{J-1}|^{0+} |\psi_J|^{0+}
\intertext{By iteratively applying the divisor counting argument in $B_{J-1}$, \dots, $B_2$, 
we have}
& \les
 \sup_{n \in \Z}
\sum_{\substack{{\bf n} \in \mathfrak{N}(\pi_1(\TT_J))\\{ n}_r = n\\|\phi_1| \geq 1}}
\frac{\jb{n}^{4s}}{|\phi_1|^{2-}} 
\sum_{\substack{\psi_2, \dots, \psi_J \in \Z\\ |\psi_j| >  (2j+2)^3\\j = 2, \dots, J}}
\prod_{j = 2}^J\frac{1}{|\psi_j|^{2-}}\notag\\
& \lesssim \frac{1}{\prod_{j = 2}^J(2j+2)^{3-}},
\end{align*}

\noi
where the last inequality follows from  
\eqref{sum1a}.

\medskip

\noi
(ii) Fix $\TT_{J+1} \in \mathfrak{BT}(J+1)$.
We proceed with the divisor counting argument as in (i).
From~\eqref{Cj}, we have 
$|\phi_{J+1}| \les |\wt{\phi}_J| + J^3$ on $C_J$ and thus
for fixed  $n^{(J+1)}$ and $\phi_{J+1}$,
there are at most  
$O(J^{0+} |\wt \phi_{J}|^{0+})$ 
many choices for $n^{(J+1)}_1$, $n^{(J+1)}_2$, and $n^{(J+1)}_3$
on $B_{J+1}$.
Also, on $C_J$, 
there are at most $O\big(J^3\big)$ many choices
for $\wt{\phi}_{J+1}$.
Hence,
 for fixed $\wt{\phi}_J$,
there are also at most $O\big(J^3\big)$ many choices 
 for $\phi_{J+1} = \wt{\phi}_{J+1} - \wt{\phi}_J$
 on $C_J$.
Then, the contribution to \eqref{sum2} in this case is estimate by 
\begin{align*}
\text{LHS of } \eqref{sum2}
& = 
 \sup_{n \in \Z}
\sum_{\substack{{\bf n} \in \mathfrak{N}(\pi_1(\TT_{J+1}))\\{ n}_r = n\\|\phi_1| \geq 1}}
\frac{\jb{n}^{4s}}{|\phi_1|^{2}} 
\sum_{\substack{\psi_2 \in \Z\\ |\psi_2| >  6^3 }} \sum_{\substack{B_2\\ \wt \phi_2 = \psi_2}} 
\frac{1}{|\psi_2|^2}
\, \cdots \\
& \hphantom{XX}
\sum_{\substack{\psi_J \in \Z\\ |\psi_J| >  (2J+2)^3 }} \sum_{\substack{B_J\\ \wt \phi_J = \psi_J}} 
\frac{1}{|\psi_J|^2}
\sum_{\substack{B_{J+1}\\ |\phi_{J+1}+\psi_J| \leq  (2J+4)^3}} 1 
\notag \\
\intertext{By applying the divisor counting argument in $B_{J+1}$,
we have}
&\les  J^{3+} \sup_{n \in \Z}
\sum_{\substack{{\bf n} \in \mathfrak{N}(\pi_1(\TT_{J+1}))\\{ n}_r = n\\|\phi_1| \geq 1}}
\frac{\jb{n}^{4s}}{|\phi_1|^{2}} 
\sum_{\substack{\psi_2 \in \Z\\ |\psi_2| >  6^3 }} \sum_{\substack{B_2\\ \wt \phi_2 = \psi_2}} 
\frac{1}{|\psi_2|^2}
\, \cdots 
\\
& \hphantom{XX}
\sum_{\substack{\psi_J \in \Z\\ |\psi_J| >  (2J+2)^3 }} \sum_{\substack{B_J\\ \wt \phi_J = \psi_J}} 
\frac{1}{|\psi_J|^2}
  |\psi_J|^{0+}
\intertext{By iteratively applying the divisor counting argument in $B_{J}$, \dots, $B_2$
and then applying~\eqref{sum1a}, 
we have}
& \les 
J^{3+}
\sup_{n \in \Z}
\sum_{\substack{{\bf n} \in \mathfrak{N}(\pi_1(\TT_{J+1}))\\{ n}_r = n\\|\phi_1|\geq 1} }
\frac{\jb{n}^{4s}}{|\phi_1|^{2-}}
\sum_{\substack{\psi_J \in \Z\\  |\psi_J|>(2j+2)^3\\ j = 2, \dots, J} }
\prod_{j = 2}^J \frac{1}{|\psi_j|^{2-}}
 \notag\\
&  \lesssim \frac{J^{3+}
}{\prod_{j = 2}^J(2j+2)^{3-}}.
\end{align*}

\noi
This proves \eqref{sum2}.
\end{proof}

\begin{remark} \rm
In \cite{GKO}, the authors applied the divisor counting argument even to the frequencies of the first generation.
On the other hand,  we did not apply
the divisor counting argument to the frequencies of the first generation
in the proof of Lemma \ref{LEM:sum1} above.
Instead,  we simply used \eqref{sum1a} to control  the first generation.
By using only the factor $\mu_1 = -2 (n^{(1)} - n_1^{(1)})(n^{(1)} - n_3^{(1)})$
(and not the entire $\phi_1$)
for the summation, 
\eqref{sum1a} 
allows us to exhibit the required smoothing
in Proposition \ref{PROP:energy}.
\end{remark}

\subsection{Normal form reductions}
\label{SUBSEC:NF}

With the notations introduced in Subsection \ref{SUBSEC:tree}, 
let us revisit the discussion in Subsection \ref{SUBSEC:41}
and then discuss the general $J$th step.
We first implement
a formal infinite iteration scheme of normal form reductions
without justifying switching of limits and summations.
We justify formal computations at the end of this subsection.
Let $v \in C(\R; H^\infty(\T))$
be a global solution to \eqref{NLS5}.
Using the notations introduced in Subsection \ref{SUBSEC:tree}, 
we write \eqref{XN^0} as 
\begin{align*}
  \frac{d}{dt}\bigg(\frac{1}{2} \|v(t) \|_{H^s}^2\bigg)
& = 
- \Re i 
\sum_{\TT_1 \in \mathfrak{BT}(1)}
\sum_{{\bf n} \in \mathfrak{N}(\TT_1)} 
\jb{n_r}^{2s}
e^{  - i  \phi_1 t }  \prod_{a \in \TT^\infty_1} v_{n_{a}}
= : \NN^{(1)}(v)(t).
\end{align*}

\noi
By performing a normal form reduction, 
we then obtain
\begin{align}
\NN^{(1)}(v)(t)
& = 
 \Re \dt \bigg[
\sum_{\TT_1 \in \mathfrak{BT}(1)}
\sum_{{\bf n} \in \mathfrak{N}(\TT_1)}
\frac{ \jb{n_r}^{2s} e^{- i  \phi_1 t } }{\phi_1}
\prod_{a \in \TT_1^\infty} v_{n_{a}}
\bigg]\notag \\
& \hphantom{X}
- \Re 
\sum_{\TT_1 \in \mathfrak{BT}(1)}
\sum_{{\bf n} \in \mathfrak{N}(\TT_1)}
\frac{ \jb{n_r}^{2s} e^{- i  \phi_1 t } }{\phi_1}
\dt\bigg(\prod_{a \in \TT^\infty_1} v_{n_{a}}\bigg)
\notag\\
& =  \Re \dt \bigg[
\sum_{\TT_1 \in \mathfrak{BT}(1)}
\sum_{{\bf n} \in \mathfrak{N}(\TT_1)}
\frac{ \jb{n_r}^{2s} e^{- i  \phi_1 t } }{\phi_1}
\prod_{a \in \TT^\infty_1} v_{n_{a}}
\bigg]\notag \\
& \hphantom{X}
-   \Re 
\sum_{\TT_1 \in \mathfrak{BT}(1)}
\sum_{b\in \TT_1^\infty}
\sum_{{\bf n} \in \mathfrak{N}(\TT_1)}
\frac{ \jb{n_r}^{2s} e^{- i  \phi_1 t } }{\phi_1}
\RR(v)_{n_b}
\prod_{a \in \TT^\infty_1\setminus\{b\}} v_{n_{a}}\notag \\
& \hphantom{X}
+ \Re i 
\sum_{\TT_2 \in \mathfrak{BT}(2)}
\sum_{{\bf n} \in \mathfrak{N}(\TT_2)}
\frac{ \jb{n_r}^{2s} e^{- i  ( \phi_1+ \phi_2) t } }{\phi_1}
\prod_{a \in \TT^\infty_2} v_{n_{a}}
 \notag\\
& =: \dt \NN_0^{(2)}(v)(t) +  \RR^{(2)}(v)(t) + \NN^{(2)}(v)(t).
\label{N^1}
\end{align}

\noi
Compare \eqref{N^1} with \eqref{XN^1}.
In the second equality, we
applied the product rule 
and  used the equation \eqref{NLS5}
to replace  $\dt v_{n_b}$ by 
the resonant part $\RR(v)_{n_b}$ and the non-resonant part $\NN(v)_{n_b}$.
In substituting the non-resonant part $\NN(v)_{n_b}$, 
we turned  the terminal node $b \in \TT^\infty_1$
into a non-terminal node with three children $b_1, b_2,$ and $b_3$,
which corresponds to extending the tree $\TT_1 \in \mathfrak{BT}(1)$
(and ${\bf n }\in \mathfrak{N}(\TT_1)$)
to $\TT_2 \in \mathfrak{BT}(2)$
(and to  ${\bf n }\in \mathfrak{N}(\TT_2)$, respectively).

\begin{remark} \label{REM:tdepend} \rm

(i)
Strictly speaking, the  phase factor appearing in $\NN^{(2)}(v)$
may be $\phi_1 - \phi_2$
when the time derivative falls on the terms with the complex conjugate.
In the following, however, we simply write it as $\phi_1 + \phi_2$ since
it does not make any difference in our analysis.
Also, we often replace $\pm 1$ and $\pm i$ by $1$
for simplicity when they do not play an important  role.
Lastly, for notational simplicity, 
we drop  the real part symbol 
on  multilinear forms
with the understanding that  all the multilinear forms
appear with 
 the real part symbol.

\smallskip

\noi
(ii)
Due to the presence of 
$e^{-i \phi_1 t}$ 
in their definitions, 
the multilinear forms such as $\NN_0^{(2)}(v)$ 
are non-autonomous in $t$.
Therefore, strictly speaking, 
they should be denoted as  $\NN_0^{(2)}(t)(v(t))$. 
In the following, however, we establish nonlinear estimates
on these multilinear forms, uniformly in  $t \in \R$,
by simply using $|e^{-i \phi_1 t}| = 1$.
Hence, 
we simply suppress such $t$-dependence
when there is no confusion.
The same comment applies to other multilinear forms.
Note that this convention was already used in Proposition \ref{PROP:energy}.

It is  worthwhile to note that the multilinear forms introduced in this section 
are non-autonomous when they are expressed in terms 
of the interaction representation $v$, solving \eqref{NLS5}.
When they are expressed in terms of the original solution $u$ to \eqref{4NLS0}
(or $\wt u$ to \eqref{NLS4}), however, 
it is easy to see that these multilinear terms are indeed autonomous.

\end{remark}

Thanks to Lemma~\ref{LEM:sum1}, 
the terms  $\NN_0^{(2)}$  and   $\RR^{(2)}$ 
can be  estimated 
in a straightforward manner;
 see   Lemma~\ref{LEM:N^J+1_0} below.
We split
 $\NN^{(2)}$ as in \eqref{N2}.
As mentioned above, 
the good part  $\NN^{(2)}_1$ 
is handled 
in an effective manner (Lemma \ref{LEM:N^J+1_1})
thanks to the frequency restriction on $C_1$.
We then apply the second step of  normal form reductions
to $\NN^{(2)}_2$
and obtain
\begin{align}
\NN^{(2)}_2 (v)
& = \dt \bigg[\sum_{\TT_2 \in \mathfrak{BT}(2)}
\sum_{{\bf n} \in \mathfrak{N}(\TT_2)}
\ind_{C_1^c} \frac{\jb{n_r}^{2s}e^{- i( \phi_1 + \phi_2 )t } }{\phi_1(\phi_1+\phi_2)}
\prod_{a \in \TT^\infty_2} v_{n_{a}} \bigg]\notag \\
& \hphantom{X} -  
\sum_{\TT_2 \in \mathfrak{BT}(2)}
\sum_{b \in\TT^\infty_2} 
\sum_{{\bf n} \in \mathfrak{N}(\TT_2)}
\ind_{C_1^c}\frac{\jb{n_r}^{2s}e^{- i( \phi_1 + \phi_2)t } }{\phi_1(\phi_1+\phi_2)}
\, \RR(v)_{n_b}
\prod_{a \in \TT^\infty_2 \setminus \{b\}} v_{n_{a}} \notag \\
& \hphantom{X} 
- \sum_{\TT_3 \in \mathfrak{BT}(3)}\sum_{{\bf n} \in \mathfrak{N}(\TT_3)}
\ind_{C_1^c}\frac{\jb{n_r}^{2s}e^{- i( \phi_1 + \phi_2 +\phi_3)t } }{\phi_1(\phi_1+\phi_2)}
\,\prod_{a \in \TT^\infty_3} v_{n_{a}} \notag\\
& =: \dt \NN^{(3)}_0(v) + \RR^{(3)}(v) + \NN^{(3)}(v).
\label{N^2}
\end{align}

\noi
As in the previous step, 
we can estimate  $\NN^{(3)}_0$ and $\RR^{(3)}$ in a straightforward manner
(Lemma~\ref{LEM:N^J+1_0}), 
while we split $\NN^{(3)}$ 
into the good part $\NN^{(3)}_1$ and the bad part $\NN^{(3)}_2$ as in \eqref{N3}, 
where $\NN^{(3)}_1$ is the restriction of $\NN^{(3)}$
onto $C_2$ defined in \eqref{Cj}.
We then apply the third step of normal form reductions 
to the bad part $\NN^{(3)}_2$.
In this way, we iterate  normal form reductions in an indefinite manner.

After the $J$th step, we have
\begin{align} \label{N^J+1}
\NN^{(J)}_2 (v)
& = \dt \bigg[ 
\sum_{\TT_J \in \mathfrak{BT}(J)}
\sum_{{\bf n} \in \mathfrak{N}(\TT_J)}
\ind_{\bigcap_{j = 1}^{J-1} C_j^c}
\frac{\jb{n_r}^{2s}e^{- i \wt{\phi}_Jt } }{\prod_{j = 1}^J \wt{\phi}_j}
\, \prod_{a \in \TT^\infty_J} v_{n_{a}}
\bigg]\notag \\
& \hphantom{X} 
- \sum_{\TT_{J} \in \mathfrak{BT}(J)}
\sum_{b \in\TT^\infty_J} 
\sum_{{\bf n} \in \mathfrak{N}(\TT_J)}
\ind_{\bigcap_{j = 1}^{J-1} C_j^c}
\frac{\jb{n_r}^{2s}e^{- i \wt{\phi}_Jt } }{\prod_{j = 1}^J \wt{\phi}_j}
\, \RR(v)_{n_b}
\prod_{a \in \TT^\infty_J \setminus \{b\}} v_{n_{a}}  \notag \\
& \hphantom{X} 
- \sum_{\TT_{J+1} \in \mathfrak{BT}(J+1)}
\sum_{{\bf n} \in \mathfrak{N}(\TT_{J+1})}
\ind_{\bigcap_{j = 1}^{J-1} C_j^c}
\frac{\jb{n_r}^{2s}e^{- i \wt{\phi}_{J+1}t } }{\prod_{j = 1}^J \wt{\phi}_j}
\,\prod_{a \in \TT^\infty_{J+1}} v_{n_{a}} \notag\\
& =: \dt \NN^{(J+1)}_0 (v)+ \RR^{(J+1)}(v) + \NN^{(J+1)}(v),
\end{align}

\noi
where
$\wt{\phi}_J$ is  as in \eqref{Cj2}.
In the following, we first  estimate $\NN^{(J+1)}_0$ and  $\RR^{(J+1)}$
by  applying Cauchy-Schwarz inequality 
and then applying the divisor counting argument (Lemma \ref{LEM:sum1}).

\begin{lemma}\label{LEM:N^J+1_0}
Let $\NN^{(J+1)}_0$ and $\RR^{(J+1)}$ be as in \eqref{N^J+1}.
Then, for any $s < 1$, 
we have
\begin{align} 
| \NN^{(J+1)}_0(v)|  \lesssim 
 \frac{c_J}{\prod_{j = 2}^J(2j+2)^{\frac{3}{2}-}}
 \|v\|_{L^2}^{2J+2}, 
 \label{N^J+1_0-1}\\
 | \RR^{(J+1)}(v)|
 \lesssim 
 \frac{ (2J+2)\cdot c_J}{\prod_{j = 2}^J(2j+2)^{\frac{3}{2}-}}
\|v\|_{L^2}^{2J+4}. 
 \label{N^J+1_r}
\end{align}
\end{lemma}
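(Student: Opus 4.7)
The plan is to estimate each bi-tree summand of $\NN^{(J+1)}_0(v)$ uniformly in $\TT_J \in \mathfrak{BT}(J)$ by a double Cauchy-Schwarz argument combined with Lemma \ref{LEM:sum1}\,(i), and then to sum the $c_J$ contributions; the bound on $\RR^{(J+1)}(v)$ will follow by reusing the $\NN^{(J+1)}_0$ estimate after extracting $|v_{n_b}|^2 \le \|v\|_{L^2}^2$ from the resonant cubic $|\RR(v)_{n_b}| = |v_{n_b}|^3$, the additional combinatorial factor $2J+2$ coming from the sum over the positions $b \in \TT_J^\infty$.

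Fix $\TT_J \in \mathfrak{BT}(J)$ and denote by $N_{\TT_J}(v)$ the corresponding summand of $\NN^{(J+1)}_0$. The first step is to write $|N_{\TT_J}(v)| \le \sum_{n \in \Z} \jb{n}^{2s} H_n$ with
\begin{equation*}
H_n := \sum_{\substack{\bn \in \mathfrak{N}(\TT_J) \\ n_r = n}} \ind_{\bigcap_{j=1}^{J-1} C_j^c} \, \frac{1}{\prod_{j=1}^{J} |\wt\phi_j|} \, \prod_{a \in \TT_J^\infty} |v_{n_a}|,
\end{equation*}
and to apply Cauchy-Schwarz in $\bn$ at fixed $n_r = n$ to split $H_n \le A_n^{1/2} K_n^{1/2}$, where $A_n$ and $K_n$ are obtained from $H_n$ by replacing its summand with $\prod_j |\wt\phi_j|^{-2}$ and $\prod_a |v_{n_a}|^2$, respectively. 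Pulling $\jb{n}^{-4s}$ out of $A_n$ and invoking Lemma \ref{LEM:sum1}\,(i) yields $A_n \le \jb{n}^{-4s} \cdot C/\prod_{j=2}^J (2j+2)^{3-}$, so that $\jb{n}^{2s} A_n^{1/2}$ is bounded uniformly in $n$ by $C^{1/2}/\prod_{j=2}^J (2j+2)^{(3-)/2}$, reducing the remaining task to the key inequality $\sum_n K_n^{1/2} \lesssim \|v\|_{L^2}^{2J+2}$.

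This last estimate is where the bi-tree structure enters essentially, and is the step I expect to require the most care. Writing $\bn = (\bn^{(1)}, \bn^{(2)})$ for the restrictions to $\Pi_1(\TT_J)$ and $\Pi_2(\TT_J)$, the product $\prod_{a \in \TT_J^\infty} |v_{n_a}|^2$ factors across the two sub-trees and the only coupling between $\bn^{(1)}$ and $\bn^{(2)}$ is the root identification $n_{r_1} = n_{r_2} = n$. Hence
\begin{equation*}
K_n = L^{(1)}_n \cdot L^{(2)}_n, \qquad L^{(j)}_n := \sum_{\substack{\bn^{(j)} \\ n_{r_j} = n}} \prod_{a \in \Pi_j(\TT_J)^\infty} |v_{n_a}|^2.
\end{equation*}
Cauchy-Schwarz in $n$ gives $\sum_n K_n^{1/2} \le \prod_{j=1}^2 \bigl(\sum_n L^{(j)}_n\bigr)^{1/2}$, and summing $\sum_n$ releases the constraint $n_{r_j} = n$, so that the terminal frequencies in $\Pi_j(\TT_J)$ become independent; dropping the non-resonance condition in Definition \ref{DEF:tree4} only enlarges the resulting positive sum, yielding $\sum_n L^{(j)}_n \le \|v\|_{L^2}^{2|\Pi_j(\TT_J)^\infty|} = \|v\|_{L^2}^{2(2j_j + 1)}$, where $j_j$ counts the non-terminal nodes of $\Pi_j(\TT_J)$. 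Since $j_1 + j_2 = J$, the two factors multiply to the desired $\|v\|_{L^2}^{2J+2}$.

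Combining the above gives $|N_{\TT_J}(v)| \lesssim \prod_{j=2}^J (2j+2)^{-(3-)/2} \|v\|_{L^2}^{2J+2}$ uniformly in $\TT_J$, and summing over the $c_J$ ordered bi-trees produces \eqref{N^J+1_0-1}. The bound \eqref{N^J+1_r} is then immediate: in $\RR^{(J+1)}(v)$ each factor $|\RR(v)_{n_b}| = |v_{n_b}|^3 \le \|v\|_{L^2}^2 \, |v_{n_b}|$, so the same argument applies to the reduced expression with one extra factor of $\|v\|_{L^2}^2$, and the outer sum over the $|\TT_J^\infty| = 2J+2$ choices of $b$ contributes the additional factor $2J+2$.
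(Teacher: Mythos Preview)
Your proof is correct and follows the same strategy as the paper: Cauchy--Schwarz in the index function $\bn$ at fixed root frequency, Lemma~\ref{LEM:sum1}\,(i) to control the weight factor, and then Cauchy--Schwarz in $n$ using the bi-tree factorization $K_n = L^{(1)}_n L^{(2)}_n$ (which the paper records as \eqref{TT1}). The paper organizes the argument into the two cases $\Pi_2(\TT_J) = \{r_2\}$ and $\Pi_2(\TT_J) \ne \{r_2\}$, whereas your factorization handles both uniformly (with $L^{(2)}_n = |v_n|^2$ in the first case), and for \eqref{N^J+1_r} the paper invokes $\ell^2_n \subset \ell^6_n$ in place of your pointwise bound $|v_{n_b}|^2 \le \|v\|_{L^2}^2$; these differences are purely cosmetic.
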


\begin{proof}
We split the proof into the following two cases:
\[ \text{(i) }  \Pi_2(\TT_J) = \{r_2\} 
\qquad \text{and} \qquad \text{(ii) } 
 \Pi_2(\TT_J) \ne \{r_2\},\]

\noi
where $\Pi_2$ denotes the projection defined in \eqref{proj1}.

\smallskip

\noi
$\bullet$ {\bf Case (i):} 
We first consider  the case  $\Pi_2(\TT_J) = \{r_2\}$.
Recall that 
for general $J \in \N$, 
we need to control the rapidly growing
 cardinality
$c_J = |\mathfrak{BT}(J)| $  defined in \eqref{cj1}.
By Cauchy-Schwarz inequality and 
Lemma \ref{LEM:sum1}, we have
\begin{align*}
| \NN^{(J+1)}_0(v)|
& \lesssim  \|v\|_{L^2}
\sum_{\substack{\TT_J \in \mathfrak{BT}(J)\\\Pi_2(\TT_J) = \{r_2\}}}
\Bigg\{ \sum_{n \in \Z} 
\bigg(
 \sum_{\substack{{\bf n} \in \mathfrak{N}(\TT_J)\\{ n}_r = n}}
\ind_{\bigcap_{j = 1}^{J-1} C_j^c}
\frac{\jb{n}^{4s}}{|\phi_1|^{2}} 
\prod_{j = 2}^J \frac{1}{|\wt{\phi}_j|^2}
\bigg) \notag \\
& \hphantom{XXXXXX} \times 
\bigg(\sum_{\substack{{\bf n} \in \mathfrak{N}(\TT_J)\\{ n}_r = n}} 
\prod_{a \in \TT^\infty_J\setminus\{r_2\}} |v_{n_a}|^2
\bigg)\Bigg\}^\frac{1}{2}  \notag \\
& \lesssim \frac{c_J}{\prod_{j = 2}^J(2j+2)^{\frac{3}{2}-}}
 \|v\|_{L^2}^{2J+2}.
\end{align*}

\smallskip

\noi
$\bullet$ {\bf Case (ii):} 
Next, we  consider the case  $\Pi_2(\TT_J) \ne \{r_2\}$.
In this case, we need to modify the argument above since
the frequency $n_r = n$ does not correspond to a terminal node.
Note that 
$\TT_J^\infty = \Pi_1(\TT_J)^\infty \cup \Pi_2(\TT_J)^\infty$
and 
\begin{align}
\sum_{\substack{{\bf n} \in \mathfrak{N}(\TT_J)\\{  n}_r = n}} 
\prod_{a \in \TT^\infty_J} |v_{n_a}|^2
= \prod_{j = 1}^2 \bigg(\sum_{\substack{{\bf n} \in \mathfrak{N}(\Pi_j(\TT_J))\\{  n}_{r_j} = n}} 
\prod_{a_j \in \Pi_j(\TT_J)^\infty} |v_{n_{a_j}}|^2
\bigg).
\label{TT1}
\end{align}

\noi
Then, 
by Cauchy-Schwarz inequality and Lemma \ref{LEM:sum1}
with \eqref{TT1}, 
we have
\begin{align*}
| \NN^{(J+1)}_0(v)|
& \lesssim 
\sum_{\substack{\TT_J \in \mathfrak{BT}(J)\\\Pi_2(\TT_J) = \{r_2\}}}
\sum_{n \in \Z} 
\Bigg\{ 
\bigg(
 \sum_{\substack{{\bf n} \in \mathfrak{N}(\TT_J)\\{ n}_r = n}}
\ind_{\bigcap_{j = 1}^{J-1} C_j^c}
\frac{\jb{n}^{4s}}{|\phi_1|^{2}} 
\prod_{j = 2}^J \frac{1}{|\wt{\phi}_j|^2}
\bigg) \notag \\
& \hphantom{XXXXXX} \times 
\bigg(\sum_{\substack{{\bf n} \in \mathfrak{N}(\TT_J)\\{n}_r = n}} 
\prod_{a \in \TT^\infty_J} |v_{n_a}|^2
\bigg)\Bigg\}^\frac{1}{2}  \notag \\
& \les 
\frac{c_J}{\prod_{j = 2}^J(2j+2)^{\frac{3}{2}-}}
\sup_{\substack{\TT_J \in \mathfrak{BT}(2)\\ \Pi_2(\TT_J) \ne \{r_2\}}}
\sum_{n \in \Z}  \Bigg\{ \prod_{j = 1}^2 
\bigg( \sum_{\substack{{\bf n} \in \mathfrak{N}(\Pi_j(\TT_J))\\{  n}_{r_j} = n}} 
\prod_{a_j \in \Pi_j(\TT_J)^\infty} |v_{n_{a_j}}|^2\bigg)\Bigg\}^\frac{1}{2}\notag 
\intertext{By Cauchy-Schwarz inequality in $n$, }
& \lesssim \frac{c_J}{\prod_{j = 2}^J(2j+2)^{\frac{3}{2}-}}
\sup_{\substack{\TT_J \in \mathfrak{BT}(2)\\ \Pi_2(\TT_J) \ne \{r_2\}}}
   \prod_{j = 1}^2 \Bigg\{
\bigg( \sum_n \sum_{\substack{{\bf n} \in \mathfrak{N}(\Pi_j(\TT_J))\\{  n}_{r_j} = n}} 
\prod_{a_j \in \Pi_j(\TT_J)^\infty} |v_{n_{a_j}}|^2\bigg)\Bigg\}^\frac{1}{2}\notag  \\
& \lesssim \frac{c_J}{\prod_{j = 2}^J(2j+2)^{\frac{3}{2}-}}
 \|v\|_{L^2}^{2J+2}.
\end{align*}

\noi
This proves the first estimate \eqref{N^J+1_0-1}.

The second estimate \eqref{N^J+1_r}
 follows from  \eqref{N^J+1_0-1}  and $\l^2_n\subset\l^6_n$,
noting that, given $\TT_J \in \mathfrak{BT}(J)$, we have $\#\{b: b \in \TT^\infty_J\} = 2J+2$.
\end{proof}

Next, we treat $\NN^{(J+1)}$ in \eqref{N^J+1}.
As before, we write
\begin{equation} \label{N^J+1_ZZ}
\NN^{(J+1)} = \NN^{(J+1)}_1 + \NN^{(J+1)}_2,
\end{equation}

\noi
where $\NN^{(J+1)}_1$ is the restriction of $\NN^{(J+1)}$
onto $C_J$ defined in \eqref{Cj}
and
$\NN^{(J+1)}_2 := \NN^{(J+1)} - \NN^{(J+1)}_1$.
In the following lemma, we estimate the first term 
in \eqref{N^J+1_1}:
\begin{align} 
\NN^{(J+1)}_1 (v)
= -  \sum_{\TT_{J+1} \in \mathfrak{BT}(J+1)}
\sum_{{\bf n} \in \mathfrak{N}(\TT_{J+1})}
\ind_{(\bigcap_{j = 1}^{J-1} C_j^c) \cap C_J}
\frac{\jb{n_r}^{2s}e^{- i \wt{\phi}_{J+1}t } }{\prod_{j = 1}^J \wt{\phi}_j}
\,\prod_{a \in \TT^\infty_{J+1}} v_{n_{a}}.
\label{N^J+1_X}
\end{align}

\noi
Then,  we apply a normal form reduction 
once again to the second term $\NN^{(J+1)}_2$ 
as in \eqref{N^J+1}.
In Subsection \ref{SUBSEC:error}, 
we show that the error term 
$\NN^{(J+1)}_2$ tends to $0$ as $J \to \infty$.

\begin{lemma}\label{LEM:N^J+1_1}
Let $\NN^{(J+1)}_1$ be as in \eqref{N^J+1_X}.
Then, for any $s< 1$, 
we have
\begin{align} 
| \NN^{(J+1)}_1(v)| 
\lesssim 
 \frac{ J^{\frac{3}{2}+}\cdot c_{J+1}}{\prod_{j = 2}^J(2j+2)^{\frac{3}{2}-}}
\|v\|_{L^2}^{2J+4}.
\label{N^J+1_1}
\end{align}
\end{lemma}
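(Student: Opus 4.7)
The plan is to mimic closely the proof of Lemma \ref{LEM:N^J+1_0}, the only substantive change being that
the frequency restriction now includes $C_J$ (so we gain a factor of $J^{3+}$ from the new divisor count via~\eqref{sum2}),
and the bi-tree is one generation taller (so we sum over $\mathfrak{BT}(J+1)$ rather than $\mathfrak{BT}(J)$, producing $c_{J+1}$ in place of $c_J$, and the total number of terminal nodes is $2J+4$ rather than $2J+2$).

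The first step is to split the sum over $\TT_{J+1} \in \mathfrak{BT}(J+1)$ into the two cases
$\Pi_2(\TT_{J+1}) = \{r_2\}$ and $\Pi_2(\TT_{J+1}) \neq \{r_2\}$, exactly as in the proof of Lemma \ref{LEM:N^J+1_0}. In both cases, the next step is to apply Cauchy--Schwarz to the inner sum over $\bn \in \mathfrak{N}(\TT_{J+1})$ with $n_r = n$ fixed, splitting it into a ``weight factor''
\[
\bigg(\sum_{\substack{\bn \in \mathfrak{N}(\TT_{J+1})\\ n_r = n}} \ind_{(\bigcap_{j=1}^{J-1} C_j^c)\cap C_J} \frac{\jb{n}^{4s}}{|\phi_1|^2}\prod_{j=2}^J \frac{1}{|\wt\phi_j|^2}\bigg)^{1/2}
\]
and a ``product factor'' $\big(\sum_{\bn,\,n_r=n} \prod_{a \in \TT^\infty_{J+1}} |v_{n_a}|^2\big)^{1/2}$. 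The weight factor is bounded, uniformly in $n$ and $\TT_{J+1}$, by $\frac{J^{\frac{3}{2}+}}{\prod_{j=2}^J (2j+2)^{\frac{3}{2}-}}$ thanks to Lemma \ref{LEM:sum1}(ii).

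For the product factor, in Case (i) with $\Pi_2(\TT_{J+1}) = \{r_2\}$, the root label $n_{r_2}=n$ is a terminal frequency, so I factor out $|v_n|$, sum the remaining $2J+3$ terminal factors freely to get $\|v\|_{L^2}^{2J+3}$, and then close by Cauchy--Schwarz (or just $\ell^2\subset\ell^\infty$) in the external sum $\sum_n |v_n| \cdot (\text{constant in }n) = \|v\|_{\ell^1}$-type bound; more precisely, pulling $|v_n|$ out and applying Cauchy--Schwarz in $n$ together with $\ell^2 \subset \ell^6$ yields an extra factor of $\|v\|_{L^2}$, giving $\|v\|_{L^2}^{2J+4}$ in total. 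In Case (ii), I use the factorization \eqref{TT1} to split the product over $\TT_{J+1}^\infty = \Pi_1(\TT_{J+1})^\infty \cup \Pi_2(\TT_{J+1})^\infty$ as a product of two independent sums over $\Pi_j(\TT_{J+1})$ with common constraint $n_{r_j}=n$; Cauchy--Schwarz in the external $\sum_n$ then turns the joint constraint $n_{r_1}=n_{r_2}=n$ into the product of two unconstrained $L^2$-sums, each yielding a power of $\|v\|_{L^2}$ with the two exponents adding to $2J+4$.

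Finally, the outer sum over $\mathfrak{BT}(J+1)$ contributes the combinatorial factor $c_{J+1}$, and combining this with the weight bound and the product bound gives exactly~\eqref{N^J+1_1}. The only mildly subtle point is the bookkeeping in Case (ii): one must verify that the number of terminal nodes in $\Pi_1(\TT_{J+1})$ and in $\Pi_2(\TT_{J+1})$ always adds to $2J+4$ regardless of how the bi-tree grew, which is immediate from Definition \ref{DEF:tree2}(ii) since $|\TT^\infty|=2|\TT^0|+2$. I expect no other real obstacle; the work is essentially a cosmetic modification of Lemma \ref{LEM:N^J+1_0}'s proof, exchanging the divisor-counting input~\eqref{sum1} for~\eqref{sum2}.
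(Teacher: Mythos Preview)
Your proposal is correct and follows essentially the same approach as the paper: split into the two cases $\Pi_2(\TT_{J+1})=\{r_2\}$ and $\Pi_2(\TT_{J+1})\neq\{r_2\}$, apply Cauchy--Schwarz in the inner frequency sum, invoke Lemma~\ref{LEM:sum1}(ii) for the weight factor, and handle the product factor exactly as in Lemma~\ref{LEM:N^J+1_0}. One small cosmetic remark: the mention of $\ell^2\subset\ell^6$ in Case~(i) is unnecessary here (that embedding is what drives the $\RR^{(J+1)}$ bound, not this one); the extra $\|v\|_{L^2}$ in Case~(i) comes simply from pulling out $v_{n_{r_2}}=v_n$ and applying Cauchy--Schwarz in $n$.
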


\begin{proof}
We only discuss the case  $\Pi_2(\TT_{J+1}) = \{r_2\}$
since the modification for the case  $\Pi_2(\TT_{J+1}) \ne \{r_2\}$ is straightforward as in 
the proof of Lemma \ref{LEM:N^J+1_0}. 
By Cauchy-Schwarz inequality and 
Lemma \ref{sum1}, we have
\begin{align*}
| \NN^{(J+1)}_1(v)|
& \lesssim  \|v\|_{L^2}
\sum_{\substack{\TT_{J+1} \in \mathfrak{BT}(J+1)\\\Pi_2(\TT_{J+1}) = \{r_2\}}}
\Bigg\{ \sum_{n \in \Z} 
\bigg(
 \sum_{\substack{{\bf n} \in \mathfrak{N}(\TT_{J+1})\\{ n}_r = n}}
\ind_{(\bigcap_{j = 1}^{J-1} C_j^c) \cap C_J}
\frac{\jb{n}^{4s}}{|\phi_1|^{2}} 
\prod_{j = 2}^J \frac{1}{|\wt{\phi}_j|^2}
\bigg) \notag \\
& \hphantom{XXXXXX} \times 
\bigg(\sum_{\substack{{\bf n} \in \mathfrak{N}(\TT_{J+1})\\{ n}_r = n}} 
\prod_{a \in \TT^\infty_{J+1}\setminus\{r_2\}} |v_{n_a}|^2
\bigg)\Bigg\}^\frac{1}{2}  \notag \\
& \les  \frac{ J^{\frac{3}{2}+}\cdot c_{J+1}}{\prod_{j = 2}^J(2j+2)^{\frac{3}{2}-}}
\|v\|_{L^2}^{2J+4}.
\end{align*}

\noi
This proves \eqref{N^J+1_1}.
\end{proof}

\begin{remark} \rm

A  notable difference from \cite{GKO} appears in our definition of $C_j$
in \eqref{Cj};
on the one hand,   the cutoff size on $\wt \phi_{j+1}$ in \cite{GKO}
depended on $\wt \phi_j$ and $\phi_1$.
On the other hand, our choice of the cutoff size on $\wt \phi_{j+1}$ in \eqref{Cj}
is independent of  $\wt \phi_j$ or $\phi_1$,
thus providing simplification of the argument.

Another difference appears in the first step of the normal form reductions.
On the one hand, 
we simply applied the first normal form reduction in  \eqref{N^1}
without introducing a cutoff on the phase function $\phi_1$.
On the other hand, 
 in \cite{GKO}, a cutoff of the form\footnote{In \cite{GKO}, 
 this parameter was denoted by $N$.  Here, we use $K$ to avoid 
 the confusion with the frequency truncation parameter $N \in \N$.} $|\phi_1| >K$ was introduced
 to separate the first multilinear term
 into the nearly resonant and non-resonant parts.
The use of this extra parameter $K = K(\|u_0\|_{L^2})$
allowed the authors to show that the local existence time
can be given by $T \sim \|u_0\|_{L^2}^{-\al}$ for some $\al > 0$.
See \cite{GKO} for details.
Since our argument only requires the summability 
(in $J$) of the multilinear forms, 
we do not need to introduce this extra parameter.

\end{remark}

We conclude this subsection by briefly discussing how to justify all the formal steps
performed in the normal form reductions.
In particular, we need to justify 
\begin{itemize}

\item[(i)] the application of the product rule and 

\item[(ii)] switching time derivatives and summations

\end{itemize}

\noi
Suppose that a solution $v$ to \eqref{NLS5} lies
in $C(\R; H^\frac{1}{6}(\T))$.
Then,  
from \eqref{NLS5}, we have
\begin{equation*} 
\|\dt v_n\|_{C_T \ell^\infty_n} 
\leq \big\|  \F^{-1}(|\ft v_n|)\big\|_{C_T L^3_x}^3 
\les \big\|  \F^{-1}(|\ft v_n|)\big\|_{C_T H^\frac{1}{6}_x}^3  
=  \| v\|_{C_T H^\frac{1}{6}_x}^3  
\end{equation*}

\noi
for each $T > 0$, where $C_TB_x = C([-T, T]; B_x)$.
Hence, $\dt v_n \in C([-T, T]; \ell^\infty_n)$,
justifying (i) the application of the product rule.
Note that given $N \in \N$, any solution $v$ to \eqref{NLS7} belongs
to $C(\R; H^\infty(\T))$
and hence (i) is justified.
Moreover, the summations over spatial frequencies in the normal form reductions 
applied to solutions to \eqref{NLS7} 
are all finite
and therefore, (ii) the  switching time derivatives and summations 
over spatial frequencies trivially hold true
for \eqref{NLS7}.
In general, 
the proof of Lemma \ref{LEM:N^J+1_0}
shows that the summation defining  $\NN_0^{(j)}$
converges  (absolutely and uniformly in time). 
Then, Lemma 5.1 in~\cite{GKO} allows us to switch 
the time derivative with the summations
as temporal distributions, thus
justifying differentiation by parts.

\subsection{On the error term $\NN_2^{(J+1)}$}
\label{SUBSEC:error}

In this subsection, we prove that $\NN_2^{(J+1)}$
in \eqref{N^J+1_ZZ}
tends to 0 as $J \to \infty$ under some regularity assumption on $v$.
From \eqref{N^J+1}, we have
\begin{align} \label{X1}
 \NN_2^{(J+1)}(v)
&  =- \sum_{\TT_{J+1} \in \mathfrak{BT}(J+1)}
\sum_{{\bf n} \in \mathfrak{N}(\TT_{J+1})}
\ind_{\bigcap_{j = 1}^{J} C_j^c}
\frac{\jb{n_r}^{2s}e^{- i \wt{\phi}_{J+1}t } }{\prod_{j = 1}^J \wt{\phi}_j}
\,\prod_{a \in \TT^\infty_{J+1}} v_{n_{a}}.
\end{align}

\begin{lemma}\label{LEM:N^J+1_2}
Let $\NN^{(J+1)}_2$ be as in \eqref{X1}.
Then, given $v \in H^s(\T)$, $s > \frac 12$,  
we have
\begin{align} 
| \NN^{(J+1)}_2(v)|  \longrightarrow 0, 
\label{X1a}
\end{align}

\noi
as $J \to \infty$.
\end{lemma}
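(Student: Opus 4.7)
My plan is to adapt the Cauchy--Schwarz-plus-divisor-counting strategy used in Lemmas \ref{LEM:N^J+1_0} and \ref{LEM:N^J+1_1}, with one new ingredient to handle the fact that the denominator in \eqref{X1} contains only $\wt\phi_1, \dots, \wt\phi_J$ and not $\wt\phi_{J+1}$. Without a $\wt\phi_{J+1}$ factor, Lemma \ref{LEM:sum1} cannot control the sum over the two free frequencies newly introduced at generation $J+1$. To compensate, I will use the hypothesis $v \in H^s(\T)$ with $s > \frac{1}{2}$, which gives $\sum_{n \in \Z} |v_n| \lesssim \|v\|_{H^s}$ by Cauchy--Schwarz and $\sum_n \jb{n}^{-2s} < \infty$.

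Concretely, I would first parametrize each $\TT_{J+1} \in \mathfrak{BT}(J+1)$ by an ancestor $\TT_J \in \mathfrak{BT}(J)$ together with a choice of terminal node $b \in \TT_J^\infty$ that is promoted to non-terminal with three children $b_1, b_2, b_3$; correspondingly, each $\mathbf{n} \in \mathfrak{N}(\TT_{J+1})$ decomposes as $\mathbf{n}' \in \mathfrak{N}(\TT_J)$ together with $(n_{b_1}, n_{b_2}, n_{b_3}) \in \Z^3$ subject to $n_b = n_{b_1} - n_{b_2} + n_{b_3}$. Bounding $\ind_{\bigcap_{j=1}^J C_j^c} \leq \ind_{\bigcap_{j=1}^{J-1} C_j^c}$ and performing the inner sum over the three new frequencies first, Young's inequality for signed convolution yields
\[
\sum_{\substack{n_{b_1}, n_{b_2}, n_{b_3} \in \Z \\ n_b = n_{b_1} - n_{b_2} + n_{b_3}}} |v_{n_{b_1}}|\,|v_{n_{b_2}}|\,|v_{n_{b_3}}| \;\leq\; \Big(\sum_{n \in \Z} |v_n|\Big)^3 \;\lesssim\; \|v\|_{H^s}^3,
\]
uniformly in $n_b$. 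The remaining outer sum now has the same shape as in the proof of Lemma \ref{LEM:N^J+1_0}, except that the factor $|v_{n_b}|$ is absent from the product over $\TT_J^\infty \setminus \{b\}$. Applying Cauchy--Schwarz in the inner sum over $\mathbf{n}' \in \mathfrak{N}(\TT_J)$ with $n_r = n$ fixed, invoking Lemma \ref{LEM:sum1}(i) on the resulting weight, handling the cases $\Pi_2(\TT_J) = \{r_2\}$ and $\Pi_2(\TT_J) \neq \{r_2\}$ exactly as in the proof of Lemma \ref{LEM:N^J+1_0}, and finally using $\sum_n |v_n| \lesssim \|v\|_{H^s}$ once more to close the sum over the root variable $n$, I expect to obtain an estimate of the form
\[
|\NN^{(J+1)}_2(v)| \;\lesssim\; \frac{(2J+2)\, c_J}{\prod_{j = 2}^J (2j+2)^{\frac{3}{2}-}}\, \|v\|_{H^s}^4\, \|v\|_{L^2}^{2J}.
\]
Since $c_J = 2^{J-1}\, J!$ while $\prod_{j=2}^J (2j+2) = 2^{J-2}(J+1)!$, this prefactor decays faster than any exponential in $J$, and \eqref{X1a} follows for any fixed $v \in H^s(\T)$.

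The hard part is precisely the missing $\wt\phi_{J+1}$: it is the one place where Lemma \ref{LEM:sum1} does not apply, and it is the reason why this estimate only serves as a \emph{vanishing} error bound rather than as a uniform estimate in $\|v\|_{L^2}$ alone. Using $v \in H^s$ rather than merely $v \in L^2$ is essential here, since the embedding $H^s \hookrightarrow \ell^1$ (valid precisely for $s > \frac{1}{2}$) is exactly what allows Young's convolution inequality to close the sum over the two new free frequencies; every other step is a routine variant of the estimates already carried out for $\NN^{(J+1)}_0$ and $\NN^{(J+1)}_1$.
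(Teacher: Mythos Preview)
Your plan to absorb the extra generation via $H^s(\T) \hookrightarrow \ell^1(\Z)$ is the right idea, but the Cauchy--Schwarz step as you describe it does not close. Once you bound the generation-$(J{+}1)$ convolution uniformly in $n_b$ by $\|v\|_{H^s}^3$, the remaining sum carries the product $\prod_{a \in \TT_J^\infty \setminus \{b\}} |v_{n_a}|$, and after Cauchy--Schwarz in $\mathbf{n}'$ with $n_r = n$ fixed you are left with $\sum_n C \big(\sum_{\mathbf{n}',\, n_r = n} \prod_{a \neq b} |v_{n_a}|^2\big)^{1/2}$. Your proposed closure ``$\sum_n |v_n| \lesssim \|v\|_{H^s}$'' requires a factor $|v_n|$ in that product, which is present only in the sub-case $\Pi_2(\TT_J) = \{r_2\}$ with $b \neq r_2$. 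In every other case the $n$-sum diverges: for instance, in case~(ii) with $b \in \Pi_1(\TT_J)^\infty$, the factorization \eqref{TT1} produces a factor $\sum_{\mathbf{n}_1',\, n_{r_1}=n}\prod_{a_1\neq b}|v_{n_{a_1}}|^2$ which, after solving the root constraint for $n_b$, equals $\|v\|_{L^2}^{2(|\Pi_1^\infty|-1)}$ \emph{uniformly in $n$}, so its $\ell^1_n$-norm is infinite.

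The repair is minor: do not discard the $n_b$-dependence. Set $w_{n_b} := \sum_{n_{b_1}-n_{b_2}+n_{b_3}=n_b} |v_{n_{b_1}}||v_{n_{b_2}}||v_{n_{b_3}}|$; Young's inequality gives $\|w\|_{\ell^2} \leq \|v\|_{\ell^1}^2 \|v\|_{\ell^2} \lesssim \|v\|_{H^s}^2 \|v\|_{L^2}$, and the outer sum is then precisely the $(2J{+}2)$-linear form $\NN_0^{(J+1)}$ with $v_{n_b}$ replaced by $w_{n_b}$, to which Lemma~\ref{LEM:N^J+1_0} applies multilinearly. This yields $|\NN^{(J+1)}_2(v)| \lesssim \frac{(2J+2)\,c_J}{\prod_{j=2}^J (2j+2)^{3/2-}} \|v\|_{H^s}^2 \|v\|_{L^2}^{2J+2} \to 0$. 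For comparison, the paper's own proof bypasses Lemma~\ref{LEM:sum1} altogether: it transfers the weight $\jb{n_r}^{2s}$ onto two terminal nodes via the crude bound $|n_r| \leq C_0^J |n_{b_k}|$, estimates each $|\wt\phi_j|^{-1}$ trivially by $(2j+2)^{-3}$ on $C_j^c$, and applies Young's inequality (two factors in $\ell^2$, the rest in $\ell^1$) to obtain $\frac{C_0^{2sJ}\,c_J}{\prod_{j=2}^J (2j+2)^3}\|v\|_{H^s}^{2J+4}$, trading an exponential loss for a stronger denominator.
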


We point out that one can actually prove Lemma \ref{LEM:N^J+1_2}
under a weaker regularity assumption $s \geq \frac 16$.
See \cite{OW}.
For our purpose, however, we only need to prove the vanishing of the error term 
$\NN^{(J+1)}_2$ for sufficiently regular functions;
our main objective is to obtain an energy estimate 
(on the modified energy $\EE_{N, t}$ defined in \eqref{energy6}) 
for solutions to the truncated equation \eqref{NLS7}.
Given $N \in \N$, any solution $v$ to \eqref{NLS7} belongs
to $C(\R; H^\infty(\T))$.
Therefore, while 
 the convergence speed in \eqref{X1a}  depends on $N \in \N$, 
 the final energy estimate
\eqref{energy7} holds with an implicit constant  {\it independent} of $N \in \N$.

\begin{proof}

Given ${\bf n} \in \mathfrak{N}(\TT_{J+1})$, 
it follows from Definition \ref{DEF:tree4}  and the triangle inequality that 
there exists $C_0 > 0$ such that 
\begin{align}
|n_r| \leq C_0^J |n_{b_k}|
\label{X2}
\end{align}

\noi
for (at least) 
 two terminal nodes $b_1, b_2 \in \TT_{J+1}^\infty$
Then, by Young's inequality (placing $v_{n_{b_k}}$ in $\l^2_n$, $k = 1, 2$, 
and the rest in $\l^1_n$) with  \eqref{cj1}, \eqref{Cj}, and \eqref{X2}, we have 
\begin{align*} 
 |\NN_2^{(J+1)}(v)|
& \les \frac{C_0^{2sJ}\cdot c_J}{\prod_{j = 2}^J(2j+2)^3}
  \sup_{\TT_{J+1} \in \mathfrak{BT}(J+1)}
\sum_{{\bf n} \in \mathfrak{N}(\TT_{J+1})}
\bigg(\prod_{k = 1}^2 \jb{n_{b_k}}^{s}|v_{n_{b_k}}|\bigg)
\bigg(\prod_{a \in \TT^\infty_{J+1}\setminus{\{b_1, b_2\}}} |v_{n_{a}}|\bigg)\\
& \les \frac{C_0^{2sJ}\cdot c_J}{\prod_{j = 2}^J(2j+2)^3}
\|v\|_{H^s}^{2J+4} \longrightarrow 0, 
\end{align*}

\noi
as $J \to \infty$.
\end{proof}

\subsection{Improved energy bound}
\label{SUBSEC:improved}

We are now ready to establish
the improved energy estimate \eqref{energy7}.
Let $v$ be a smooth global solution\footnote{In fact, it suffices to assume that $v \in C(\R; H^\frac{1}{6}(\T))$.
See \cite{GKO, OW}.} to \eqref{NLS5}.
Then, by applying the normal form reduction $J$ times, we obtain\footnote{Once again, we are replacing $\pm 1$
and  $\pm i$
by 1 for simplicity since they play no role in our analysis.} 
\begin{align*} 
\dt \bigg(\frac 12 \| v \|_{H^s}^2\bigg)
=  \dt \sum_{j = 2}^{J+1} \NN^{(j)}_0(v)
 + \sum_{j = 2}^{J+1} \NN^{(j)}_1(v) + \sum_{j = 2}^{J+1} \RR^{(j)}(v)
 + \NN^{(J+1)}_2.
\end{align*}

\noi
Thanks to Lemma \ref{LEM:N^J+1_2}, 
by taking the limit as $J \to \infty$, we obtain
\begin{align*} 
\dt \bigg(\frac 12 \| v \|_{H^s}^2\bigg)
=  \dt \sum_{j = 2}^\infty \NN^{(j)}_0(v)
 + \sum_{j = 2}^\infty \NN^{(j)}_1(v) + \sum_{j = 2}^\infty \RR^{(j)}(v).
\end{align*}

\noi
In other words,  defining the modified energy $\EE_t(v)$
by 
\begin{align*}
\EE_t(v) := \frac 12 \| v (t) \|_{H^s}^2- \sum_{j = 2}^\infty \NN^{(j)}_0(v)(t), 
\end{align*}

\noi
we have
\begin{align*} 
\dt \EE_t(v) 
=  \sum_{j = 2}^\infty \NN^{(j)}_1(v)(t) + \sum_{j = 2}^\infty \RR^{(j)}(v)(t).
\end{align*}

\noi
Suppose that  $\|v\|_{C(\R; L^2)} \leq r$.
Then, applying Lemmas  \ref{LEM:N^J+1_0} and \ref{LEM:N^J+1_1} with \eqref{cj1}, 
we obtain 
\begin{align*} 
|\dt \EE_t(v) |
& \les
\sum_{j =2}^\infty
 \frac{c_{j-1}}{\prod_{k = 2}^{j-1}(2k+2)^{\frac{3}{2}-}}
r^{2j}
+ \sum_{j =2}^\infty
 \frac{ j^{\frac{3}{2}+}\cdot c_{j}}{\prod_{k = 2}^{j-1}(2k+2)^{\frac{3}{2}-}}
r^{2j+2}\\
& + \sum_{j =2}^\infty
 \frac{ j\cdot c_{j-1}}{\prod_{k = 2}^{j-1}(2k+2)^{\frac{3}{2}-}}
r^{2j+2}\\
& \leq C(r).
\end{align*}

\noi
In view of the boundedness of the
frequency projections
and noting that any solution to~\eqref{NLS7} is in $H^\infty(\T)$, 
the same energy estimate holds for solutions to the truncated equation~\eqref{NLS7},
uniformly in  $N \in \N$.

\subsection{On the proof of Lemma \ref{LEM:Gibbs}}
\label{SUBSEC:finite}

We conclude this section with a brief discussion on the proof of  Lemma \ref{LEM:Gibbs}.
First,  note that Lemma~\ref{LEM:Gibbs}\,(ii) is an immediate corollary
of Lemma~\ref{LEM:Gibbs}\,(i).
Moreover,  Lemma~\ref{LEM:Gibbs}\,(i) follows from Egoroff's theorem 
once we prove that 
\[\mathfrak{S}_N( v) = \sum_{j = 2}^\infty \NN^{(j)}_{0, N}( v)\]

\noi
converges almost surely to 
\[\mathfrak{S}_\infty(v) = \sum_{j = 2}^\infty \NN^{(j)}_0( v).\]

\noi
See   \cite[Proposition 6.2]{OTz1}.
In fact, one can show that 
$\mathfrak{S}_N( v)$ converges to $\mathfrak{S}_\infty( v)$
for any $v \in L^2(\T)$.

Recall from \eqref{N^J+1} that 
 $\NN^{(j)}_0( v)$ consists of a sum of the multilinear forms
associated with ordered bi-trees $\TT_{j-1} \in \mathfrak{BT}(j-1)$.
Given $N \in \N$, the multilinear form 
$\NN^{(j)}_{0, N}( v)$
is obtained 
in a similar manner
with the following modifications:

\begin{itemize}
\item[(i)] We set 
$ v_n = 0$ for all $|n| >N$. 
This corresponds to setting 
$v_{n_a} = 0$  for all $|n| >N$ and all terminal nodes $a \in \TT_{j-1}^\infty$.

\item[(ii)] In view of \eqref{NLS7},
we also set 
$v_{n_a} = 0$  for all $|n| >N$ and all parental nodes in $\TT_{j-1}$.
This amounts to setting 
$v_{n_a} = 0$  for all $|n| >N$ and all non-terminal nodes $a \in \TT_{j-1}^0$.

\end{itemize}

\noi
In particular, we have
\begin{align}
\NN^{(j)}_{0, N} ( v)
= \sum_{\TT_{j-1} \in \mathfrak{BT}(j-1)}
 \sum_{{\bf n} \in \mathfrak{N}_N(\TT_{j-1})}
\ind_{\bigcap_{k = 1}^{j-2} C_k^c}
\frac{\jb{n_r}^{2s}e^{- i \wt{\phi}_{j-1}t } }{\prod_{k = 1}^{j-1} \wt{\phi}_k}
\, \prod_{a \in \TT^\infty_{j-1}} v_{n_{a}}, 
\label{NFX}
\end{align}

\noi
where $ \mathfrak{N}_N(\TT_{j-1})$ is as in Definition \ref{DEF:tree4}.
Namely, 
$\NN^{(j)}_{0, N} (v)$ is obtained from $\NN^{(j)}_{0} (v)$
by simply truncating 
all the frequencies (including the ``hidden''\footnote{Namely, the parental frequencies
at the non-terminal nodes do not appear explicitly in the sum in~\eqref{NFX}
but they implicitly appear through the relation in Definition \ref{DEF:tree4}.} parental frequencies)
by $N \in \N$.

Write 
 \begin{align}
 \NN^{(j)}_0( v) - \NN^{(j)}_{0, N}(  v)
&  = \Big\{\NN^{(j)}_0( v) - \wt \NN^{(j)}_{0, N}(  v)\Big\}
 + \Big\{\wt \NN^{(j)}_{0, N}( v) - \NN^{(j)}_{0, N}(  v)\Big\} \notag \\
& =: \1_j + \II_j, 
\label{NFX1}
 \end{align}

 \noi where $\wt \NN^{(j)}_{0, N} ( v)$ is 
 obtained from 
  $\NN^{(j)}_{0, N} ( v)$ 
  by replacing 
$ {\bf n} \in \mathfrak{N}_N(\TT_{j-1})$
with 
$ {\bf n} \in \mathfrak{N}_N^0(\TT_{j-1})$. 
i.e.\,we are truncating only the parental frequencies
at non-terminal nodes $a \in \TT^0_{j-1}$.
 Then, by writing
\[  \II_j = \wt \NN^{(j)}_{0, N}( v) - \wt \NN^{(j)}_{0, N}(  \P_{\leq N} v), \]

\noi
 it follows from the multilinearity and the boundedness in $L^2(\T)$ (Lemma \ref{LEM:N^J+1_0})
that the second term $\II_j$  tends to 0 as $N \to \infty$, 
by simply writing the difference in a telescoping sum.
More precisely, we write  $\II$ as a telescoping sum, replacing
$2j$ factors of $v_{n_a}$, $a \in \TT^\infty_{j-1}$, into 
$2j$ factors of $(\P_{\leq N} v)_{n_a}$.
This 
introduces
$2j$ differences, each containing exactly one
factor of $v - \P_{\leq N} v$
(tending to 0 as $N \to \infty$). 
We then simply apply Lemma~\ref{LEM:N^J+1_0} on each difference.

Similarly, we can show that  $\1_j$ in \eqref{NFX1} tends to 0 as $N \to \infty$ 
by writing the difference in a telescoping sum.
Namely, noting only the difference
between $\NN^{(j)}_0( v)$ and $ \wt \NN^{(j)}_{0, N}(  v)$
is the frequency cutoffs $\ind_{|n_a| \leq N}$ at each non-terminal node $a \in \TT^0_{j-1}$,
we introduce $j-1$ differences by adding the frequency cutoff $\ind_{|n_a| \leq N}$ at each non-terminal node
in a sequential manner.
By construction, each of the $j-1$ differences
has one non-terminal node $a_* \in \TT^0_{j-1}$ with the restriction $|n_{a_*}| > N$.
Then, from Definition \ref{DEF:tree4}, 
we see that there exists at least one terminal node $b \in  \TT^\infty_{j-1}$
which is a descendants of $a_*$ such that 
\begin{align*}
|n_b| \geq C_0^{-J} |n_{a_*}| > C_0^{-J}N 
\end{align*}

\noi
for some  $C_0 > 0$ (compare this with \eqref{X2}).
This forces each of the $j-1$ differences in the telescoping sum
to tend to 0 as $N \to \infty$, 
and hence $\1_j$ in \eqref{NFX1} tends to 0 as $N \to 0$.

Therefore, 
$\NN^{(j)}_{0, N}(  v)$ converges to 
$ \NN^{(j)}_0( v)$ as $n \to \infty$ for any $v \in L^2(\T)$.
Finally, in view of the fast decay in $j$ in Lemma \ref{LEM:N^J+1_0}, 
the convergence of $\mathfrak{S}_N( v)$  to $\mathfrak{S}_\infty( v)$
follows from 
the dominated convergence theorem.

\section{Proof of Theorem \ref{THM:sing}:
Non quasi-invariance under the dispersionless model}
\label{SEC:sing}

In this section, we present the proof of 
Theorem \ref{THM:sing}.  The basic ingredients are
the Fourier series representation of the (fractional)
Brownian loops,  the  law of the iterated logarithm,
and the solution formula \eqref{formula} to the dispersionless model \eqref{ND1}.
More precisely, 
we show that, while the Gaussian random initial data distributed according to $\mu_s$
satisfies the law of the iterated logarithm, 
the solution given by \eqref{formula} does not satisfy
the law of the iterated logarithm for any non-zero time.
We divide the argument into three cases:
(i) $s = 1$ corresponding to 
the Brownian/Ornstein-Uhlenbeck loop,
(ii) $\frac 12 < s < \frac 32$, 
corresponding to the fractional Brownian loop
(and $s > \frac 32$ with $s \notin \frac 12 + \N$),
and (iii) $s \in \frac 12 + \N$: the critical case.
For simplicity, we set $t = 1$ in the following.  The proof for non-zero $t \ne1$ follows in a similar manner.

\subsection{Brownian/Ornstein-Uhlenbeck loop}\label{SUBSEC:5.1}
We first consider the $s= 1$ case.
Under the law of the random Fourier series
\begin{equation}\label{BL}
u(x) =u(x; \o) = \sum_{n\in \mathbb{Z}}\frac{g_n(\o)}{\jb{n}}e^{inx}
\end{equation}

\noi
corresponding to the Gaussian measure $\mu_1$, 
$\Re u$ and $\Im u$ are independent stationary Ornstein-Uhlenbeck (OU) processes (in $x$)
on $ [0,2\pi)$. Recall that the law of this process can be written as
\begin{equation}\label{rep1}
u(x) \stackrel{\mathrm{d}}{=} \P_{\ne 0}w(x) + g_0 
= w(x) -\fint_0^{2\pi}w(y) dy  + g_0,
\end{equation}

\noi
where $w$ is a complex OU bridge with $w(0)=w(2\pi)=0$ 
and $g_0$ is a standard complex-valued Gaussian random variable
(independent from $w$).

We now recall the law of the iterated logarithm for the Brownian motion (see \cite[I.16.1]{RW}): 
\begin{proposition}\label{LEM:LIL1}
Let $B(t)$ be  a standard Brownian motion on $\R_+$. Then, for each $t \geq 0$,
\begin{equation}\label{LIL1}
\limsup_{h\downarrow 0} \frac{B(t+h)-B(t)}{\sqrt{2h\log\log \frac{1}{h}}}=1,
\end{equation}
almost surely.
\end{proposition}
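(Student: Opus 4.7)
The plan is to reduce by stationarity of increments to the case $t = 0$, so that the statement becomes
\[
\limsup_{h \downarrow 0} \frac{B(h)}{\sqrt{2 h \log\log (1/h)}} = 1 \quad \text{almost surely},
\]
and then establish the matching upper and lower bounds separately. The two workhorses are the Gaussian tail estimate $\P(B(h) > a) \sim \frac{\sqrt{h}}{a\sqrt{2\pi}} e^{-a^2/(2h)}$ (with a two-sided estimate of the same form for $a/\sqrt h \to \infty$) and the reflection principle $\P(\max_{0 \le s \le h} B(s) > a) = 2\, \P(B(h) > a)$, combined with the two Borel--Cantelli lemmas applied along a geometric sequence $h_n = q^n$ for a suitable $q \in (0, 1)$.

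First, to prove $\limsup \leq 1$ almost surely, I would fix $\theta > 1$ and $q \in (0, 1)$, and for $n \in \N$ set
\[
A_n = \Big\{ \max_{0 \leq s \leq q^n} B(s) > \theta\, \sqrt{2\, q^n \log\log q^{-n}} \Big\}.
\]
The reflection principle and the Gaussian tail give $\P(A_n) \les (n \log q^{-1})^{-\theta^2}$, which is summable since $\theta > 1$, so by the first Borel--Cantelli lemma only finitely many $A_n$ occur almost surely. For $h \in [q^{n+1}, q^n]$ and $n$ large,
\[
\frac{B(h)}{\sqrt{2 h \log\log (1/h)}}
\leq \frac{\max_{0 \leq s \leq q^n} B(s)}{\sqrt{2\, q^{n+1} \log\log q^{-n-1}}}
\leq \theta\, q^{-1/2}\, (1 + o(1)).
\]
Sending $q \uparrow 1$ and then $\theta \downarrow 1$ gives the upper bound.

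For the matching lower bound $\limsup \geq 1$ almost surely, the idea is to exploit the independence of the increments $X_n := B(q^n) - B(q^{n+1})$ over the disjoint intervals $[q^{n+1}, q^n]$. For $\theta < 1$ and $q \in (0, 1)$, consider
\[
C_n = \Big\{ X_n > \theta\, \sqrt{2\, (q^n - q^{n+1}) \log\log q^{-n}} \Big\}.
\]
The matching Gaussian lower bound yields $\sum_n \P(C_n) = \infty$ for $\theta < 1$, and since the $X_n$ are independent, the second Borel--Cantelli lemma gives that infinitely many $C_n$ occur almost surely. Applying the already-established upper bound to the scale $q^{n+1}$ produces $|B(q^{n+1})| \les \sqrt{q^{n+1} \log\log q^{-n-1}}$ for large $n$, so writing $B(q^n) = X_n + B(q^{n+1})$ and taking $q$ small enough that the error term is negligible compared with the $\sqrt{q^n}$-scale of $X_n$, we obtain $B(q^n) \geq \theta' \sqrt{2\, q^n \log\log q^{-n}}$ for some $\theta' < \theta$ arbitrarily close to $1$, for infinitely many $n$. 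Sending $q \downarrow 0$ and $\theta \uparrow 1$ yields $\limsup \geq 1$ almost surely.

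The main technical point is the balancing of the geometric ratio $q$: the interpolation from the discrete subsequence $h_n = q^n$ to arbitrary $h$ in the upper bound forces $q$ close to $1$, while the lower bound requires $q$ close to $0$ so that the tail $B(q^{n+1})$ does not contaminate the independent-increment estimate. These two regimes can be reconciled by first fixing $q$ appropriately for each direction of the inequality and then driving the auxiliary parameter $\theta$ to its critical value $1$.
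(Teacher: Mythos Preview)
Your argument is the standard textbook proof of Khintchine's law of the iterated logarithm and is correct as sketched. Note, however, that the paper does not prove this proposition at all: it simply recalls it as a classical fact with a citation to \cite[I.16.1]{RW}, so there is no ``paper's own proof'' to compare against. What you have written is essentially the proof one finds in that reference (or in any standard text on Brownian motion), with the usual two-stage Borel--Cantelli argument along a geometric subsequence and the reflection principle for the upper bound.
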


It follows from 
 the representation \eqref{rep1},
 the absolute continuity\footnote{The absolute continuity property
 claimed here can be easily seen by the Fourier series representations
  of the Brownian motion/bridge (with \eqref{BL} and \eqref{rep1})
  and Kakutani's theorem (Lemma \ref{LEM:Kakutani} below).
  For example, the Brownian motion $B(t)$ on $[0, 2\pi)$ has
  the following Fourier-Wiener series
  \[B(t) = g_0 t + \sum_{n \in \Z \setminus \{0\}} \frac{g_n}{n} e^{int}.\]   }
  of the Brownian bridge with respect to the Brownian motion
 on any interval $[0,\gamma)$,  $\gamma<2\pi$, 
 and the absolute continuity of the OU bridge with respect to the Brownian bridge also on any interval $[0,\gamma)$, 
 that the limit \eqref{LIL1} also holds for $\Re u$ and $\Im u$ on $[0, 2 \pi)$. 

Define $\psi$ by 
\[\psi(h) = \sqrt{2h\log\log \frac{1}{h}}, \quad 0<h<1.\]

\noi
Let $0 \leq  x < 2\pi$.
As a corollary to Proposition \ref{LEM:LIL1}, we have 
\begin{align}
\label{LIL2}
& \limsup_{h\downarrow 0} \frac{\Re u(x+h)-\Re u(x)}{\psi(h)}=1
\end{align}

\noi
almost surely.

In the following, 
by a direct calculation, 
we show that 
$\Re[e^{-i  |u|^2}u]$ does not satisfy \eqref{LIL2} with a positive probability.
This will show that 
the pushforward measure $\wt \Phi(t)_* \mu_s$
under the dynamics of the dispersionless model \eqref{ND1}
is not absolutely continuous with respect to the Gaussian measure $\mu_s$. 

On the one hand, we have 
\begin{align*}
 \Re [& e^{-i|u(y)  |^2} u(y)] -\Re[e^{-i|u(x)|^2}u(x)]\notag \\
& =(\Re u(y)-\Re u(x))\cos |u(y)|^2+(\cos|u(y)|^2-\cos|u(x)|^2)\Re u(x) \notag\\
& \hphantom{X}+(\Im u(y)-\Im u(x))\sin |u(y)|^2+(\sin |u(y)|^2-\sin|u(x)|^2)\Im u(x). 
\end{align*}

\noi
On the other hand, 
by the Taylor expansion
with $\eta(x, y) = |u(y)|^2-|u(x)|^2$, we have
\begin{align*}
\cos |u(y)|^2 &=\cos |u(x)|^2 -\sin |u(x)|^2 \cdot \eta(x, y) + O\big(\eta^2(x, y)\big), \\
\sin |u(y)|^2 &= \sin |u(x)|^2 +\cos |u(x)|^2 \cdot \eta(x, y)+O\big(\eta^2(x, y)\big).
\end{align*}

\noi
Putting together, we obtain
\begin{align}
\Re [e^{- i|u(y)|^2}& u(y)] -\Re[e^{- i|u(x)|^2}u(x)] \notag\\
& =  (\Re u(y)-\Re u(x))\cos |u(y)|^2
 - \sin |u(x)|^2 \cdot \eta(x, y) \Re u(x) \notag \\
& \hphantom{X}
 + (\Im u(y)-\Im u(x))\sin |u(y)|^2
 +\cos|u(x)|^2\cdot \eta(x, y) \Im u(x)\notag \\
& \hphantom{X}
  + O\big(\eta^2(x, y)\big)\cdot(|\Re u(x)| +|\Im u(x)|)\notag \\
& =  (\Re u(y)-\Re u(x))\cos |u(y)|^2\notag \\
& \hphantom{X}
- \sin |u(x)|^2 \cdot\big\{(\Re u(y) - \Re u(x))(\Re u(y)+\Re u(x)) \big\} \Re u(x)\notag  \\
& \hphantom{X}
-\sin |u(x)|^2 \cdot\big\{(\Im u(y) - \Im u(x))(\Im u(y)+\Im u(x)) \big\} \Re u(x) \notag \\
& \hphantom{X}
 +(\Im u(y)-\Im u(x))\sin |u(y)|^2\notag \\
& \hphantom{X}
+\cos|u(x)|^2\cdot\big\{(\Re u(y)-\Re u(x))(\Re u(y)+\Re u(x))\big\} \Im u(x)\notag \\
& \hphantom{X}
+\cos|u(x)|^2\cdot\big\{(\Im u(y)-\Im u(x))(\Im u(y)+\Im u(x))\big\} \Im u(x)\notag \\
& \hphantom{X}
  + O\big(\eta^2(x, y)\big)\cdot(|\Re u(x)| +|\Im u(x)|).
\label{LIL4}
\end{align}

Fix $0 \leq  x < 2\pi$.
Let $\{h_n = h_n(\o)\}_{n \in \N}$ be a (random) sequence achieving the limit supremum in \eqref{LIL2}
almost surely.
Then, for this sequence $\{h_n\}_{n \in \N}$, we have
\begin{align}
& \limsup_{n \to \infty} \frac{|\Im u(x+h_n)-\Im u(x)|}{\psi(h_n)}\leq  1
\label{LIL3}
\end{align}

\noi
almost surely.
Divide the expression in \eqref{LIL4} by $\psi(h_n)$,
after replacing $y$ by $x+h_n$.
Then, by  taking the  limit as $n \to \infty$
and applying \eqref{LIL2} and \eqref{LIL3}, we have
\begin{align}
  \limsup_{n\rightarrow \infty} & 
\frac{\Re [e^{- i|u(x+h_n)|^2}u(x+h_n)]-\Re[e^{-i|u(x)|^2}u(x)]}{\psi(h_n)}
\notag\\
&\ge - 2\sin |u(x)|^2 \cdot (\Re u(x))^2\notag\\
&\phantom{X} - \big|\cos|u(x)|^2\big| -\big|\sin |u(x)|^2\big|
- 2\cdot \big|\sin |u(x)|^2 \cdot \Im u(x)\Re u(x)\big|\notag\\
&\phantom{X}
 - 2\cdot \big|\cos |u(x)|^2\cdot  \Re u(x) \Im u(x)\big|
- 2\cdot\big| \cos |u(x)|^2\big| (\Im u(x))^2,
\label{LIL5}
\end{align}

\noi
almost surely.

Fix $M \gg1 $ by 
\begin{align}
M^2= - \frac{\pi}{2}+2k\pi,
\label{LIL6}
\end{align}

\noi
for some large $k \in \N$ (to be chosen later).
Given $\eps>0$, define the set
\[A =\big\{\o \in \O: |\Re u(x;\o )-M|\le \eps , \  |\Im u(x;\o)|\le \eps \big\}.\]

\noi
Noting that under the law of the OU loop, $\Re u(x)$ and $\Im u(x)$ are independent Gaussian
random variables, we have
\begin{equation*}
P(A) \ge \delta(M,\epsilon)>0
\end{equation*}

\noi
for any $\eps>0$. 
By choosing $\eps> 0$ sufficiently small such that $\eps M \ll 1$, 
we have 
\begin{align}
 \big||u(x)|^2-M^2\big| \leq 2 \eps (M+\eps) = o(1).
\label{LIL7}
\end{align}

\noi
Then, from \eqref{LIL6} and \eqref{LIL7}, we have
\begin{align*}
\text{RHS of }\eqref{LIL5}
& \geq 2\big|\sin|u(x)|^2  \big| \cdot M (M-3\eps)- 2 (1 + \eps (M + 2\eps))\\
& \geq M^2 
\end{align*}

\noi
on $A$.
By choosing $M \gg1$, we see that the set
\[A_1 = \left\{\limsup_{h\downarrow 0} \frac{\Re[e^{- i|u(x+h)|^2}u(x+h)]-\Re[e^{-i|u(x)|^2]}u(x)]}{\psi(h)}=1\right\}\]

\noi
does not have probability 1 under the law of $u$.
Therefore, $\mu_1$ is not quasi-invariant
under the flow of the dispersionless model \eqref{ND1}.

\subsection{Fractional Brownian motion}\label{SUBSEC:5.2}

In this subsection, we
extend the previous result to  the distribution of the random Fourier series
\begin{align}
u_s(x) & = \sum_{n\in \mathbb{Z}} \frac{g_n}{\jb{n}^s}e^{inx}, \label{fBM}
\end{align}

\noi
corresponding to $\mu_s$.
For $\frac{1}{2}<s<\frac{3}{2}$, the series \eqref{fBM} is related to 
a {\it fractional Brownian motion}. 
Recall that a fractional Brownian motion with Hurst parameter $H$, $0<H<1$, 
is the Gaussian process $B^H(t),t\ge 0 $ with stationary increments and covariance
\begin{equation}\label{COV}
\mathbb{E}[B^H(t_1)B^H(t_2)] = \frac{\rho(H)}{2}(t_1^{2H}+t_2^{2H}-|t_2-t_1|^{2H}),
\end{equation}

\noi
where
\[\rho(H) = \mathbb{E}\big[\big(B^H(1)\big)^2\big] = -2 \frac{\cos (\pi H)}{\pi}\G(-2H) \ \text{ when } H \ne \tfrac12
\qquad \text{and}\qquad  \rho\big(\tfrac 12\big)=1.\]

\noi
When  $H=\frac 12$, a fractional Brownian motion becomes the standard Brownian motion.
In the following, we only consider the case $H \ne \frac 12$.

It is known that there is a subtle issue on building a series representation for
a fractional Brownian motion $B^H$.
Instead, we consider the following series\footnote{As mentioned in Section \ref{SEC:1}, 
 we drop the factor  of $2\pi$.}
\[\ft {B}^H (t) = \wt g_0 t+\sqrt{2}\sum_{n\ge 1} \bigg(\wt g_n\frac{\cos(nt)-1}{n^{H+\frac12}}+\wt g_n'
 \frac{\sin(nt)}{n^{H+\frac 12}}\bigg),\]

\noi
for $t\in [0,2\pi]$, where $\wt{g}_n$ and $\wt{g}_n'$ are now independent {\it real-valued} standard Gaussians. 
Then, Picard \cite{picard}  showed the following result
on the relation between $B^H$ and $\ft B^H$.

\begin{lemma}[Theorems 24 and 27 in Section 6 of \cite{picard}]\label{picardthm}
The processes $B^H(t)$ and $\ft {B}^H(t)$ can be coupled in such a way that 
\[B^H(t)-\widehat{B}^H(t)\]
is a $C^\infty$-function on $(0,2\pi]$.
Moreover, if $H \ne \frac 12$, then the laws of $B^H$ and $\ft B^H$
are equivalent on $[0, T]$ for $T < 2\pi$ (and mutually singular if $T = 2\pi$).
\end{lemma}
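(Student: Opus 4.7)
The plan is to build a coupling of $B^H$ and $\widehat B^H$ on a common probability space, show by a covariance calculation that their difference is almost surely $C^\infty$ on $(0,2\pi]$, and then read off the equivalence/singularity statements from the Feldman--H\'ajek dichotomy. For the coupling, I would start from the harmonizable representation
\[B^H(t) = c_H \int_\R (e^{it\xi}-1)|\xi|^{-H-\frac12}\, d\widehat W(\xi),\]
with $\widehat W$ a complex symmetric white noise, and define the Gaussians $\widetilde g_n,\widetilde g_n'$ directly in terms of $\widehat W$ restricted to frequencies near the integers so that $\widehat B^H$ is built from the same underlying noise. Splitting the stochastic integral into integer-frequency and non-integer-frequency contributions identifies the former with $\widehat B^H$ (up to a deterministic rearrangement absorbing the $\widetilde g_0\, t$ drift) and puts the entire non-integer piece into $D(t):=B^H(t)-\widehat B^H(t)$.

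To see that $D\in C^\infty((0,2\pi])$ almost surely, I would compute the covariance kernel $K_D(t_1,t_2)=\mathbb{E}[D(t_1)D(t_2)]$. The covariance of $B^H$ is given by \eqref{COV}, contributing the singular term $|t_2-t_1|^{2H}$, while the covariance of $\widehat B^H$ is a Fourier series of the form $\sum_{n\ge 1}n^{-2H-1}\cos(n(t_2-t_1))$ plus a quadratic-in-$t$ correction; via Poisson summation this series reproduces the same $|t_2-t_1|^{2H}$ singularity modulo a smooth remainder. The singular parts cancel in $K_D$, leaving a $C^\infty$ kernel on $(0,2\pi]^2$, and Kolmogorov's continuity theorem applied to arbitrary derivatives upgrades this to a.s.\ $C^\infty$ sample paths.

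Given the coupling and this smoothness, equivalence of the two laws on $[0,T]$, $T<2\pi$, follows from Feldman--H\'ajek: smoothness of $D$ on the compact interval $(0,T]$ (with $D(0)=0$) makes the difference of the two covariance operators a smoothing, hence Hilbert--Schmidt, perturbation, which is exactly the criterion for equivalence of two centered Gaussian measures on $C[0,T]$. The singularity at $T=2\pi$ is the more delicate part. The mechanism is that $\widehat B^H(t)-\widetilde g_0\, t$ is $2\pi$-periodic by construction, since every term in its Fourier series vanishes at both endpoints, coupling the local fluctuations of $\widehat B^H$ near $0$ and near $2\pi$; no such coupling holds for $B^H$. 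I would exploit this by exhibiting a measurable tail functional---for instance a matched pair of quadratic variations near $t=0$ and near $t=2\pi$---whose almost sure value under the law of $\widehat B^H$ differs from its value under the law of $B^H$, so that by the Gaussian zero-one law the two laws are mutually singular.

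The main obstacle is the last step: isolating a tail functional that robustly detects the periodicity constraint built into $\widehat B^H$ while being measurable for the Borel $\sigma$-algebra on $C[0,2\pi]$. The coupling and the smoothness computation are essentially algebraic/spectral and should be routine once the right representations are set up, but the singularity argument is genuinely structural and will require care.
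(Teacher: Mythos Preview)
The paper does not prove this lemma; it is quoted verbatim as a black-box citation of Theorems~24 and~27 from Picard~\cite{picard}, with no argument given. So there is no ``paper's own proof'' against which to compare your proposal.

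That said, your sketch is broadly along the right lines for how one would attack such a statement: coupling via a spectral/harmonizable representation, a covariance computation showing that the diagonal singularities cancel, and Feldman--H\'ajek for the equivalence. One caution: your equivalence step says that smoothness of $D$ on $(0,T]$ with $D(0)=0$ makes the covariance perturbation Hilbert--Schmidt, but the lemma only asserts $C^\infty$ on the open-at-zero interval $(0,2\pi]$, and you have not controlled the behaviour of $D$ (or its derivatives) as $t\downarrow 0$. For Feldman--H\'ajek you need $D$ to lie almost surely in the Cameron--Martin space of $B^H$ on $[0,T]$, which is a Sobolev-type regularity requirement \emph{including} the endpoint $0$; smoothness on $(0,T]$ alone is not obviously sufficient. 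Picard's actual argument handles this, but your write-up would need an extra estimate near $t=0$ to close the gap. Your identification of the singularity at $T=2\pi$ via the built-in periodicity of $\widehat B^H-\widetilde g_0\,t$ is the correct mechanism.
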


Recall Kakutani's criterion \cite{Kakutani} in the Gaussian case.

\begin{lemma}\label{LEM:Kakutani}
Let $\{g_n\}_{n\in \N}$ and $\{\wt{g}_n\}_{n\in \N}$ 
be two sequences of centered Gaussian random variables with variances 
$\mathbb{E}[g_n^2]=\sigma_n^2>0$ and $\mathbb{E}[\wt{g}_n^2] = \wt{\sigma}_n^2>0$.
Then,  the laws of the sequences $\{g_n\}_{n\in \N}$ and $\{\tilde{g}_n\}_{n\in \N}$ are equivalent if and only if
\[\sum_{n\in \N} \bigg( \frac{\wt{\s}^2_n}{\s^2_n}-1\bigg)^2<\infty.\]

\end{lemma}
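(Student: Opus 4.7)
The plan is to invoke the general Kakutani dichotomy for product measures and reduce matters to a one-dimensional Gaussian computation. Write $\mu = \bigotimes_{n \in \N} N(0,\sigma_n^2)$ and $\wt \mu = \bigotimes_{n \in \N} N(0,\wt\sigma_n^2)$ for the joint laws on $\R^\N$. Since each one-dimensional factor $N(0,\sigma_n^2)$ is nondegenerate, $\mu_n \sim \wt\mu_n$ holds on each coordinate, so Kakutani's theorem applies and asserts the dichotomy: $\mu$ and $\wt\mu$ are either equivalent or mutually singular, with equivalence characterized by strict positivity of the Hellinger product
\[
\prod_{n \in \N} \rho_n > 0, \qquad \rho_n := \int_\R \sqrt{\frac{d\wt\mu_n}{d\mu_n}(x)}\, d\mu_n(x).
\]
I would cite the Kakutani dichotomy as a black box (it follows from the standard martingale argument applied to the likelihood ratios), so the real work lies in evaluating $\rho_n$ and identifying the convergence condition.

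The first concrete step is the one-dimensional Hellinger integral. A direct completion of the square in the Gaussian integral yields
\[
\rho_n \;=\; \Bigg(\frac{2\sigma_n \wt\sigma_n}{\sigma_n^2 + \wt\sigma_n^2}\Bigg)^{\!1/2}, \qquad 1 - \rho_n^2 \;=\; \frac{(\sqrt{r_n}-1)^2}{1+r_n},
\]
where $r_n := \wt\sigma_n^2/\sigma_n^2$. Since each $\rho_n \in (0,1]$, the product $\prod \rho_n$ is positive if and only if $\sum_n (1-\rho_n) < \infty$, which by $1-\rho_n^2 = (1-\rho_n)(1+\rho_n)$ is equivalent to $\sum_n (1-\rho_n^2) < \infty$. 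The Kakutani criterion therefore becomes
\[
\sum_{n \in \N} \frac{(\sqrt{r_n}-1)^2}{1+r_n} \;<\; \infty.
\]

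It remains to show this is equivalent to the claimed condition $\sum_n (r_n - 1)^2 < \infty$. If the latter holds, then $r_n \to 1$, and expanding $\sqrt{r_n} - 1 = \tfrac12 (r_n - 1) + O((r_n-1)^2)$ gives the asymptotic $(\sqrt{r_n}-1)^2/(1+r_n) \sim \tfrac 18 (r_n - 1)^2$, transferring summability. For the converse, the function $t \mapsto (\sqrt{t}-1)^2/(1+t)$ tends to $1$ as $t \to 0^+$ and as $t \to \infty$ and vanishes only at $t = 1$; hence each summand going to $0$ forces $r_n \to 1$, and then the same asymptotic delivers $\sum_n (r_n - 1)^2 < \infty$.

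The only delicate point is the very last equivalence: one must rule out pathological sequences $r_n$ that stray far from $1$ on a sparse subset yet still give a convergent Hellinger sum, which is precisely what the bilateral blow-up of $(\sqrt t - 1)^2/(1+t)$ at $0$ and $\infty$ prevents. Everything else is either a standard invocation of Kakutani's theorem or a routine Gaussian integral, so I expect no substantive obstacle beyond this two-line comparison.
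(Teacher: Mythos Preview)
Your argument is correct and is the standard derivation of the Gaussian Kakutani criterion: invoke the general product-measure dichotomy, compute the one-dimensional Hellinger affinity $\rho_n = \big(2\sigma_n\wt\sigma_n/(\sigma_n^2+\wt\sigma_n^2)\big)^{1/2}$, and then translate $\sum(1-\rho_n^2)<\infty$ into $\sum(r_n-1)^2<\infty$ via the local expansion near $r_n=1$ together with the observation that $(\sqrt t-1)^2/(1+t)$ stays bounded away from zero off any neighborhood of $t=1$.

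The paper does not actually prove this lemma; it simply recalls it as a known fact with a citation to Kakutani's original paper. So there is no ``paper's proof'' to compare against, and your write-up supplies exactly the kind of self-contained justification one would expect. One small remark: the lemma as stated in the paper omits the word ``independent'' for the sequences $\{g_n\}$ and $\{\wt g_n\}$, but you have (correctly) read the intended product structure into it by writing $\mu = \bigotimes_n N(0,\sigma_n^2)$; without independence the statement would be false, and the paper's applications make the intended hypothesis clear.
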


From \eqref{fBM}, we have
\begin{align}
\Re u_s(x) 
& = \Re g_0 + \sum_{n \geq 1}
\bigg(  \frac{\Re g_n+\Re g_{-n}}{\jb{n}^s}\cos (n x)
+  \frac{-\Im g_n +\Im g_{-n}}{\jb{n}^s}\sin(nx)\bigg).
\label{fBM2}
\end{align}

\noi
Then, 
applying Lemma \ref{LEM:Kakutani}  to the sequences
\[\bigg\{\wt{g}_0 t, \ \sqrt{2}\frac{\wt g_n}{n^{H+\frac12}},\ \sqrt{2}\frac{\wt g_n'}{n^{H+\frac12}}\bigg\}\]
and
\[ \bigg\{  \Re g_0, \ \frac{\Re g_n+\Re g_{-n}}{\jb{n}^s}, \ \frac{-\Im g_n +\Im g_{-n}}{\jb{n}^s}\bigg\},\]

\noi
we see that if $s=H+\frac12$, then the series \eqref{fBM2}
for $\Re u_s$
and 
\begin{align}
\wt B^H :=  \ft {B}^H- \fint_0^{2\pi}(\ft {B}^H(\alpha)-\wt g_0\alpha)d \al
  \label{fBM2a}
\end{align}
have laws that are mutually absolutely continuous. 
Therefore, in view of the computation above
and Lemma \ref{picardthm} with \cite[Theorem 35]{picard}, 
we see that 
 the laws of $B^H$ and $\Re u_s$
are equivalent on $[0, T]$ for $T < 2\pi$.
The same holds for $\Im u_s$.

We use the following version of the law of the iterated logarithm for Gaussian processes with stationary increments \cite[Theorem 7.2.15]{marcusrosen}.
First, recall the following definition.
We say that 
a function $f$ is called a normalized regularly varying function at zero with index $\al >0$ if it can be written in
the form
\[ f(x) = Cx^\al \exp \bigg(\int_1^x \frac{\eps(u)}{u} du \bigg)\]

\noi
for some constant $C \ne 0$
and $\lim_{u \to 0} \eps(u) = 0$.

\begin{proposition}\label{thm:LIL}  
Let $G=\big\{G(x), x\in [0,2\pi]\big\}$ be a Gaussian process with stationary increments and let
\[\s^2(h) = \mathbb{E}\big[|G(h)-G(0)|^2\big].\]

\noi
If $\s^2(h)$ is a normalized regularly varying function at zero with index $0<\alpha<2$, then
\begin{equation*}
\lim_{\dl\rightarrow 0} \sup_{|h|\le \dl} \frac{|G(h)-G(0)|}{\sqrt{2\s^2(|h|)\log\log \frac1{|h|}}}=1
\end{equation*}

\noi
almost surely.
\end{proposition}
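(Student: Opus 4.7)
The plan is to prove separately the upper bound $\limsup_{\delta \to 0} \sup_{|h|\le \delta} |Y(h)|/\psi(\delta) \le 1$ and the matching lower bound $\ge 1$, where $\psi(h) := \sqrt{2\sigma^2(h)\log\log(1/h)}$ and $Y(h) := G(h) - G(0)$. Both halves rely on discretization along a geometric sequence together with Borel--Cantelli; the key point is to convert the normalized regular variation of $\sigma^2$, which forces $\sigma(u) \asymp u^{\alpha/2}$ up to slowly varying corrections, into effective quantitative information. The monotonicity of $\delta \mapsto \sup_{|h| \le \delta} |Y(h)|$ then upgrades the separated $\limsup$ bounds to the claimed limit.

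For the upper bound, the plan is to apply the Borell--TIS concentration inequality to $X_n := \sup_{|h| \le \delta_n} |Y(h)|$ along $\delta_n = q^n$ for some $q \in (0,1)$:
\[ \P\bigl(X_n > \E[X_n] + r\bigr) \le 2\exp\bigl(-r^2 /(2\sigma^2(\delta_n))\bigr), \]
where $\sigma(\delta_n)$ is the maximal standard deviation in the Gaussian family $\{Y(h) : |h| \le \delta_n\}$. The crucial input is the expectation bound $\E[X_n] = O(\sigma(\delta_n))$, which one obtains via Dudley's entropy integral with the intrinsic pseudo-metric $d(h,h') = \sigma(|h-h'|)$; regular variation makes the resulting integral converge at the sharp rate. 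Choosing $r = (1+\eps)\psi(\delta_n) - \E[X_n]$ yields a probability of order $n^{-(1+\eps)^2}$, which is summable; Borel--Cantelli combined with the monotonicity of $\sigma$ and slow variation of $\psi$ between consecutive scales $\delta_{n+1}$ and $\delta_n$ then gives the upper bound after sending $\eps \to 0$.

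For the lower bound, the plan is to evaluate $Y$ at a sparse geometric grid $t_k = q^k$ with $q \in (0,1)$ sufficiently small. The identity
\[ \E[Y(t_k) Y(t_j)] = \tfrac{1}{2}\bigl(\sigma^2(t_k) + \sigma^2(t_j) - \sigma^2(|t_k - t_j|)\bigr) \]
combined with regular variation gives normalized correlations $\rho_{jk}$ decaying geometrically like $q^{|j-k|\alpha/2}$, so the $Y(t_k)$ are nearly uncorrelated. The Gaussian tail estimate gives $\P(Y(t_k) > (1-\eps)\psi(t_k)) \sim c\, k^{-(1-\eps)^2}/\sqrt{\log k}$, which sums to infinity in $k$. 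A correlation Borel--Cantelli lemma of Kochen--Stone type, using the bivariate Gaussian bound $\P(A_j \cap A_k) \le \P(A_j)\P(A_k)(1 + O(|\rho_{jk}|))$ derived from Mehler's formula, then forces infinitely many of these events to occur almost surely, yielding the lower bound after sending $\eps \to 0$.

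The main obstacle will be the lower bound: one must quantify the quasi-independence of $\{Y(t_k)\}$ precisely enough to run the second-moment Borel--Cantelli argument. The \emph{normalized} regular variation of $\sigma^2$ (rather than merely regular variation) is essential for obtaining sharp asymptotics of $\sigma^2(t_j)/\sigma^2(t_k)$ and of the covariance, while the restriction $\alpha < 2$ ensures the intrinsic pseudo-metric is non-degenerate and the Dudley entropy integral converges; these ingredients together make the concentration in the upper bound and the decorrelation in the lower bound work with the matching constant~$1$.
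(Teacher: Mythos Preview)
The paper does not prove this proposition; it is quoted verbatim as \cite[Theorem 7.2.15]{marcusrosen} and used as a black box in Section~\ref{SEC:sing}. So there is no ``paper's own proof'' to compare against.

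Your outline is essentially the standard route to such local LIL results for Gaussian processes and is close in spirit to the arguments in Marcus--Rosen. A few points deserve tightening. First, the denominator in the statement is $\psi(|h|)$, not $\psi(\delta)$; the passage from $X_n \le (1+\eps)\psi(\delta_n)$ to the desired bound on $|Y(h)|/\psi(|h|)$ for $|h|\in[\delta_{n+1},\delta_n]$ introduces an extra factor $\psi(\delta_n)/\psi(\delta_{n+1})\to q^{-\alpha/2}$, which you must remove by sending $q\uparrow 1$ at the end. Second, the bivariate bound you attribute to Mehler is not $\P(A_j\cap A_k)\le \P(A_j)\P(A_k)(1+O(|\rho_{jk}|))$; via Plackett's identity one gets instead a correction of size $O(|\rho_{jk}|\,\lambda_j\lambda_k)\,\P(A_j)\P(A_k)$ with $\lambda_k\sim\sqrt{2\log k}$. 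Your geometric decay of $\rho_{jk}$ (whose actual rate is $q^{|j-k|\min(\alpha/2,\,1-\alpha/2)}$, not always $q^{|j-k|\alpha/2}$) still absorbs this logarithmic loss, so Kochen--Stone applies. Third, Kochen--Stone only yields $\P(\limsup A_k)>0$; you need a zero--one law (the event is measurable with respect to the germ $\sigma$-field of the Gaussian process at $0$) to upgrade this to probability one. With these adjustments the sketch goes through.
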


Using  the covariance \eqref{COV}, we have
\begin{align}
\s^2(h) = \E\big[|B^H(h)-B^H(0)|^2\big] = Ch^{2H}.
\label{sigma}
\end{align}

\noi
Hence, Proposition \ref{thm:LIL}
holds for  $G(x)=B^H(x)$, $H<1$. 
Then, by the absolute continuity, the conclusion of Proposition \ref{thm:LIL}
with 0 replaced by any $x \in (0, 2\pi)$ 
also holds for $\Re u_s$ and $\Im u_s$; for any $\frac 12<s<\frac32$, we have
\begin{align}
& \lim_{\dl\rightarrow 0} \sup_{|h|\le \dl} \frac{|\Re u_s(x+h)-\Re u_s(x)|}{\sqrt{2\s^2(|h|)\log\log \frac1{|h|}}}=1
\label{LIL11a}
\end{align}

\noi
almost surely. 
Applying the law of the iterated logarithm conditionally on the set where \eqref{LIL11a} holds, 
we also have 
\begin{align*}
& \lim_{\dl\rightarrow 0} \sup_{|h|\le \dl} \frac{|\Im u_s(x+h)-\Im u_s(x)|}{\sqrt{2\s^2(|h|)\log\log \frac1{|h|}}}\leq1
\end{align*}

\noi
almost surely.
We can now reproduce exactly the proof in the previous subsection for $\frac 12<s<\frac 32$.
This proves Theorem \ref{THM:sing} for $\frac 12<s<\frac 32$.

Next, we consider the case
 $s\geq \frac 32$ such that  $s\notin \frac 12+\N$.
 We consider the critical case 
 $s\in \frac 12+\N$ in the next subsection.
The main point is to note that  
   $u_s$ in \eqref{fBM} has a $C^r$-version for each integer 
   $r<\lfloor s-\tfrac 12\rfloor $. Indeed, we have the following:

\begin{lemma}\label{LEM:cov}
Let $X(t)$, $t\in \R$,  be a stationary Gaussian process with the covariance function
\[\rho(t) =\int e^{i\al t} \nu(d\al). \]

\noi
If $\int |\al|^{2+\eps}\nu(d\al)<\infty$ for some $\eps>0$, 
then there is a version of the process $X(t)$ such that 
$\dt X(t)$ exists and is continuous. 
Moreover, $\dt X(t)$ is a stationary Gaussian process with covariance
\begin{equation*}
\int e^{i\al t} \al^2 \nu(d \al).
\end{equation*}

\end{lemma}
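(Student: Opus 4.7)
The plan is to realize $X$ through its spectral stochastic integral, produce the candidate derivative process $Y$ in $L^2(\Omega)$, and then upgrade $Y$ to a continuous version via Kolmogorov's criterion, from which a $C^1$-modification of $X$ can be assembled by pathwise integration. We begin by writing
$$
X(t) = \int_\R e^{i\alpha t}\, Z(d\alpha),
$$
where $Z$ is the centred complex Gaussian orthogonal random measure with intensity $\nu$. Since $\nu(\R) = \rho(0) < \infty$, H\"older interpolation against $\int |\alpha|^{2+\eps}\,\nu(d\alpha) < \infty$ yields $\int \alpha^2\,\nu(d\alpha) < \infty$, so
$$
Y(t) := i \int_\R \alpha\, e^{i\alpha t}\, Z(d\alpha)
$$
is well-defined in $L^2(\Omega)$, is manifestly a centred stationary Gaussian process, and has covariance $\int e^{i\alpha t}\alpha^2\,\nu(d\alpha)$ by direct computation. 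Dominated convergence (with dominating function $C\alpha^2$) then shows that the difference quotients $h^{-1}(X(t+h) - X(t))$ converge to $Y(t)$ in $L^2(\Omega)$ as $h \to 0$.

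Next, we upgrade $Y$ to a pathwise continuous modification. The elementary bound $|e^{i\alpha h} - 1|^2 \leq C\min\bigl(1,\,|\alpha h|^{\min(\eps,2)}\bigr)$ combined with the moment hypothesis gives
$$
\E\bigl|Y(t+h) - Y(t)\bigr|^2 = \int_\R |e^{i\alpha h}-1|^2\, \alpha^2\, \nu(d\alpha) \leq C|h|^{\min(\eps,2)}.
$$
Gaussian moment equivalence upgrades this to $\E|Y(t+h) - Y(t)|^{2p} \leq C_p |h|^{p\min(\eps,2)}$ for every integer $p \geq 1$, and taking $p$ large enough that $p\min(\eps,2) > 1$ produces a continuous version $\wt Y$ of $Y$ via Kolmogorov's continuity criterion.

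Finally, we set
$$
\wt X(t) := X(0) + \int_0^t \wt Y(u)\, du,
$$
which is continuously differentiable with $\dt \wt X = \wt Y$ by construction. It remains to identify $\wt X$ as a version of $X$: by Minkowski's integral inequality together with the $L^2$-convergence of the difference quotients (which is uniform in $u$ by stationarity), we pass the $L^2$-limit through $\int_0^t$ to get $\int_0^t Y(u)\, du = X(t) - X(0)$ in $L^2(\Omega)$, hence almost surely for each fixed $t$. Stationarity of $\dt \wt X$ and the covariance formula then follow directly from the corresponding properties of $Y$ established at the outset. The main obstacle is this final identification, where the pathwise definition of $\wt X$ must be reconciled with the $L^2$-sense of the derivative of $X$; once the Fubini--Minkowski coupling is set up, everything else is a routine spectral computation.
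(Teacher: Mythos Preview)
Your argument is correct: the spectral representation produces the candidate derivative $Y$, the moment hypothesis gives the H\"older bound on increments needed for Kolmogorov, and the final identification of $\wt X$ with $X$ via Fubini/Minkowski is sound (indeed one can simply swap $\int_0^t$ and the spectral integral to get $\int_0^t Y(u)\,du = X(t)-X(0)$ directly). One minor remark: you could bypass the somewhat delicate ``reconcile pathwise with $L^2$'' step at the end by noting that $\E\big|\int_0^t(\wt Y(u)-Y(u))\,du\big|\le\int_0^t\E|\wt Y(u)-Y(u)|\,du=0$, which is immediate from Tonelli.

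As for comparison with the paper: the paper does not actually prove this lemma. It is stated as a standard fact about differentiability of stationary Gaussian processes and then applied with $\nu$ equal to the weighted counting measure $\sum_n \jb{n}^{-2s}\delta_n$ on $\Z$. Your write-up supplies exactly the kind of elementary spectral argument that the authors are implicitly invoking, so there is nothing to contrast.
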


Since we work on $\T$, the spectral measure $\nu(d\al)$ is 
the counting measure on $\Z$ and 
\[\rho(x) =2 \sum_{n\in \Z} \frac{e^{inx}}{\jb{n}^{2s}}.\]

\noi
Note that when $s>\frac 32$, we have
\[2\sum_{n\in \Z} \frac{|n|^{2+\eps}}{\jb{n}^{2s}}<\infty\]

\noi
for sufficiently small $\eps > 0$
and thus we can apply Lemma \ref{LEM:cov}.
Differentiating $u_s$ in \eqref{fBM} $\lfloor s-\frac 12 \rfloor$ times, 
we obtain a process 
\begin{align*}
 \dx^r u_s(x) \stackrel{\textrm{d}}{=}  
\sum_{n\in \Z \setminus\{0\}} \frac{ g_n}{|n|^{-r} \jb{n}^{s}}e^{inx}
\end{align*}

\noi
with $r = \lfloor s-\frac 12 \rfloor$. 
Given $ s \in (\frac 12 + j, \frac 32 + j)$ for some $j \in \N$, 
we have $s - r = s-j \in (\frac 12, \frac 32)$.
Noting that $|n|^{-r} \jb{n}^{s}\sim \jb{n}^{s-r}$, we can 
apply Lemma \ref{LEM:Kakutani} to show the laws of 
$\wt B^H$ defined in \eqref{fBM2a}, $\Re  \dx^r u_s$, 
and $\Im  \dx^r u_s$ are equivalent.
Hence, proceeding as before, we can apply  Proposition \ref{thm:LIL} to 
 $\Re  \dx^r u_s$ and $\Im  \dx^r u_s$.
 Namely, we obtain
 \begin{align}
& \lim_{\dl\rightarrow 0} \sup_{|h|\le \dl} \frac{|\Re \dx^r u_s(x+h)-\Re \dx^r u_s(x)|}{\sqrt{2\s^2(|h|)\log\log \frac1{|h|}}}=1
\label{LIL12}
\end{align}

\noi
almost surely. 
 Applying the law of the iterated logarithm conditionally on the set where \eqref{LIL12} holds, 
 we also have 
 \begin{align}
& \lim_{\dl\rightarrow 0} \sup_{|h|\le \dl} \frac{|\Im \dx^r u_s(x+h)-\Im \dx^r u_s(x)|}{\sqrt{2\s^2(|h|)\log\log \frac1{|h|}}}\leq 1
\label{LIL13}
\end{align}

\noi 
almost surely.

With \eqref{LIL12} and \eqref{LIL13} at hand, 
we  can basically repeat  the   proof 
in  Subsection~\ref{SUBSEC:5.1} 
by differentiating \eqref{LIL4}
and applying 
 \eqref{LIL12} and \eqref{LIL13}.
A straightforward application of the product rule to compute $\dx^r (e^{-i|u_s(x)|^2}u_s(x))$
would be computationally cumbersome.
Thus, we perform some simplification before taking derivatives.
First, note that from \eqref{LIL12} and \eqref{LIL13} with \eqref{sigma}, we have
 \begin{align}
& \lim_{\dl\rightarrow 0} \sup_{|h|\le \dl} 
\frac{|\Re \dx^j u_s(x+h)-\Re \dx^j u_s(x)|}{\sqrt{2\s^2(|h|)\log\log \frac1{|h|}}}=0,
\label{LIL14}\\
& \lim_{\dl\rightarrow 0} \sup_{|h|\le \dl} \frac{|\Im \dx^j u_s(x+h)-\Im \dx^j u_s(x)|}{\sqrt{2\s^2(|h|)\log\log \frac1{|h|}}}=0
\label{LIL15}
\end{align}

\noi
almost surely, for $j = 0, 1, \dots, r-1$.
 
In the following, we will take $r$ derivatives (in $x$) of both sides of \eqref{LIL4}
by setting  $y = x+h$.
In view of \eqref{LIL14} and \eqref{LIL15}, 
we see that,
after taking $r$ derivatives,  
dividing by $\sqrt{2\s^2(|h|)\log\log \frac1{|h|}}$,
and taking $\lim_{\dl\rightarrow 0} \sup_{|h|\le \dl}$, 
the only terms in \eqref{LIL4} that survive are those terms
where all the $r$ derivatives
falls only on $\Re u_s(x+h)-\Re u_s(x)$
(or $\Im u_s(x+h)-\Im u_s(x)$)
to which we can apply \eqref{LIL12} and \eqref{LIL13}.
Therefore, we obtain 
\begin{align*}
\lim_{\dl\rightarrow 0} \sup_{|h|\le \dl}
& \frac{\Re [\dx^r(e^{- i|u_s(x+h)|^2} u_s(x+h))] -\Re[\dx^r (e^{- i|u_s(x)|^2}u_s(x))]}
{\sqrt{2\s^2(|h|)\log\log \frac1{|h|}}}
 \notag\\
&\ge - 2\sin |u_s(x)|^2 (\Re u_s(x))^2\notag\\
&\phantom{X} - \big|\cos|u_s(x)|^2\big| -\big|\sin |u_s(x)|^2\big|
- 2\cdot \big|\sin |u_s(x)|^2 \Im u_s(x)\Re u_s(x)\big|\notag\\
&\phantom{X}
 - 2\cdot \big|\cos |u_s(x)|^2 \Im u_s(x)\Re u_s(x)\big|
- 2\cdot\big| \cos |u_s(x)|^2\big| (\Im u_s(x))^2,
\end{align*}

\noi
which is exactly the right-hand side of   \eqref{LIL5}.
The rest follows as in Subsection \ref{SUBSEC:5.1}.

\subsection{Critical case:\,$s\in \frac 12+\mathbb{N}$}

In this case, we cannot simply apply Proposition \ref{thm:LIL}  directly, 
because, taking $s=\frac 32$ for example, we have\footnote{This computation follows from 
the computations in the proof of Proposition 
\ref{PROP:LL} below.}  
\begin{align}
\s_{\frac 32}(x) = \mathbb{E}\big[|u_{\frac 32}(x)-u_{\frac 32}(0)|^2\big] 
&= 2\sum_{n\in \mathbb{Z}}\frac{|1-e^{inx}|^2}{(1+n^2)^{\frac 32}}  \notag\\
& = 2 \sum_{n=1}^\infty \frac{(1-\cos(nx))^2+\sin^2(nx)}{(1+n^2)^{\frac 32}} \nonumber \\
& \sim x^2 \cdot \log \frac{1}{|x|},  
\label{COV2}
\end{align}

\noi
as $x \to 0$.  
In particular, $\s_{\frac 32} (x)$ is not 
a normalized regularly varying function at zero with index 
$0< \al < 2$.
Hence, Proposition \ref{thm:LIL}  is not applicable.

In \cite{benassi}, the authors considered the Gaussian process on $\mathbb{R}^n$ with covariance function given by the kernel of the inverse of a quite general elliptic pseudodifferential operator and studied the precise regularity of the process. In particular, they obtained a result generalizing Proposition \ref{thm:LIL}  by very different methods from those in \cite{marcusrosen}. 

For us, the relevant operator is $2^{-1}(\text{Id}-\dx^2)^{s}$ on $\mathbb{T}$. 
In this case, which the authors of \cite{benassi} call ``critical'' owing to the behavior \eqref{COV2} of the increments, the relevant result from \cite[Theorem 1.3 (ii)]{benassi} reads as follows.

\begin{proposition}\label{prop: benassi}
Let $X_{\frac 32}$ be the stationary Gaussian process on $\mathbb{R}$ 
with the covariance operator  $2(\textup{Id}-\dx^2 )^{-\frac 32}$. 
Then,  $X_{\frac 32}(x)$ has continuous sample paths.
Moreover, there exists a constant $c_{\frac 32}>0$ such that for each $y\in \mathbb{R}$, we have
\[\limsup_{x\to y} \frac{|X_{\frac32}(x)-X_{\frac32}(y)|}{|x-y|\sqrt{\log \frac{1}{|x-y|} \log \log \log\frac{1}{|x-y|}}}=
c_{\frac{3}{2}}\]

\noi
almost surely.
\end{proposition}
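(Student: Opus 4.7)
The plan is to reduce the claimed law of the iterated logarithm to the classical Hartman--Wintner LIL for partial sums of independent standard Gaussians, via a Littlewood--Paley decomposition that exposes the near-independence of the increments of $X_{3/2}$ at dyadic frequency scales. Using the spectral representation associated to the covariance operator $2(\textup{Id}-\dx^2)^{-3/2}$, write
\[ X_{3/2}(x) = \sum_{k \geq 0} Y_k(x), \]
where $Y_k$ collects the modes $|n| \sim 2^k$. The $Y_k$ are mutually independent Gaussian processes (disjoint Fourier support), and a computation parallel to \eqref{COV2} yields, for $|h| = 2^{-N}$,
\[ \operatorname{Var}\bigl(Y_k(y+h) - Y_k(y)\bigr) \sim \begin{cases} c \cdot h^2 & \textup{if } k \leq N, \\ c \cdot 2^{-2k} & \textup{if } k > N. \end{cases} \]
Continuity of the sample paths of $X_{3/2}$ is immediate from Kolmogorov's criterion together with Gaussian moment equivalence and the bound $\sigma^2(h) \lesssim h^{2-\varepsilon}$ for every $\varepsilon>0$.

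Fix $y \in \R$ and set $N = \lfloor \log_2(1/|h|) \rfloor$. Splitting
\[ X_{3/2}(y+h) - X_{3/2}(y) = \sum_{k=1}^{N} Z_k(h) + R_N(h), \qquad Z_k(h) := Y_k(y+h) - Y_k(y), \]
the main term is a sum of $N$ independent centered Gaussians, each of variance comparable to $h^2$, while the tail $R_N(h)$ has variance $\sum_{k>N} 2^{-2k} \sim h^2$, which is smaller than the main variance $\sim Nh^2 \sim h^2\log(1/|h|)$ by a factor of $\log(1/|h|)$. Along the dyadic sequence $h_N = 2^{-N}$, the rescaled variables $\xi_k := Z_k(h_N)/h_N$ form an asymptotically iid sequence of centered Gaussians with common variance $c>0$, so the classical LIL for iid sums gives
\[ \limsup_{N \to \infty} \frac{\bigl|\sum_{k=1}^N \xi_k\bigr|}{\sqrt{2 N \log\log N}} = \sqrt{c} \quad \textup{almost surely}. \]
Multiplying by $h_N$ and substituting $N = \log_2(1/h_N)$ recovers the claimed LIL along the dyadic subsequence $\{h_N\}$, with explicit constant $c_{3/2}$ determined by $c$ and $\log 2$.

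To upgrade from the dyadic subsequence to the full limsup $x \to y$, I would apply Gaussian concentration (Borell--TIS) to the centered Gaussian field $\{X_{3/2}(x) - X_{3/2}(y) : 2^{-N-1} \leq |x-y| \leq 2^{-N}\}$ on each annulus, combined with Borel--Cantelli: the expected supremum on such an annulus is of the same order as the typical value at a single endpoint, so the cross-annular fluctuations are absorbed into an error lower-order than $h\sqrt{\log(1/|h|)\log\log\log(1/|h|)}$. The main obstacle is making the heuristic ``asymptotic iid'' assertion about $\{\xi_k\}$ precise enough to invoke the sharp form of the classical LIL, since the $Z_k(h)$ for different $k$ are not genuinely identically distributed and $\operatorname{Var}(Z_k(h))/h^2 \to c$ only up to subleading corrections; pinning down the exact constant $c_{3/2}$ requires tracking these corrections carefully. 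This is precisely the analytic content of the pseudodifferential kernel estimates carried out in \cite{benassi}.
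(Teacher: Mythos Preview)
The paper does not prove Proposition~\ref{prop: benassi} itself; it is quoted from \cite[Theorem~1.3~(ii)]{benassi}. What the paper does prove is the $\T$-analogue, Proposition~\ref{PROP:LL}, and your proposal is closest in spirit to that argument, so I compare against it.

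Both arguments reduce to Khintchine's LIL for a sum of independent centered Gaussians via a frequency block decomposition. The essential difference is how the dependence on $h$ is handled. You form $Z_k(h)=Y_k(y+h)-Y_k(y)$ directly; these are independent in $k$ for fixed $h$, but as $h=h_N$ varies the family $\{\xi_k=Z_k(h_N)/h_N\}_{k\le N}$ is a \emph{triangular array}, not the partial sums of a single fixed sequence, so the classical LIL does not apply as stated. This is exactly the ``asymptotic iid'' gap you flag, and it is more serious than a matter of tracking constants. The paper circumvents this entirely by first \emph{linearizing}: after discarding the cosine contribution and the frequencies $n>\lfloor 1/|x|\rfloor$ (both Gaussian with variance $O(x^2)$, hence negligible against $x^2\log(1/|x|)$), it replaces $\sin(nx)$ by $nx$ plus an $O((nx)^3)$ error. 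The main term then factors as $x$ times the partial sum
\[
\sum_{n=1}^{\lfloor 1/|x|\rfloor}\frac{n}{\jb{n}^{3/2}}(g_n-g_{-n}),
\]
which is a \emph{fixed} random walk independent of $x$. Grouping into blocks of unit variance (so $N(k)\sim e^k$, about $\log(1/|x|)$ blocks) and applying Khintchine's LIL to that fixed sequence is then legitimate and pins down the constant exactly. Because the main object is a genuine partial sum in a single integer index, no separate Borell--TIS argument is needed to pass from a dyadic subsequence to the full $\limsup_{x\to 0}$.

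In short: your outline is correct in spirit and would go through if you insert the linearization step to pull out the factor $h$ \emph{before} blocking; that single move dissolves the triangular-array issue and makes the annulus supremum argument unnecessary.
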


The $\log\log$ from the classical law of the iterated logarithm and Proposition \ref{thm:LIL} 
 is now replaced by a factor involving the triply iterated logarithm $\log\log\log$. 
In the following, we state and prove an analogue of Proposition \ref{prop: benassi}
on $\T$
in a direct manner.  See Proposition~\ref{PROP:LL} below.
Using this almost sure constancy of the modulus of
continuity (Proposition~\ref{PROP:LL}), 
we can once again repeat the argument presented 
 in Subsection~\ref{SUBSEC:5.1}.

The results in \cite{benassi} are much more general than Proposition \ref{prop: benassi}. In particular, they apply to operators with variable coefficients. In that case, the local modulus of continuity of the process can change from point to point (although it is constant across different realizations of the sample path). 
In our specific case, it is possible to give a more elementary proof,
 using the classical Khintchine's law of the iterated logarithm 
for independent sums, that 
the process $u_{\frac 32}$ has an exact modulus of continuity almost surely.
In terms of the setting in \cite{benassi}, 
this simplified proof comes as no surprise since
our process $u_{\frac 32}$ has a particularly simple representation as a sum of independent terms with respect to which the covariance operator is diagonal.

\begin{proposition}\label{PROP:LL}
Let  $u_{\frac 32}$ be given by  the random Fourier series in  \eqref{fBM}
with $s = \frac 32$. 
Then, for each $y\in \mathbb{T}$, we have
\begin{align}
\limsup_{x\to y} \frac{|u_{\frac32}(x)-u_{\frac 32}(y)|}
{2^{\frac 32}|x-y|\sqrt{\log \frac{1}{|x-y|} \log\log\log \frac{1}{|x-y|}}}=1
\label{LIL16a}
\end{align}

\noi
almost surely.
\end{proposition}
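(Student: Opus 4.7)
The plan is to reduce to $y=0$ by stationarity and then apply Khintchine's classical law of the iterated logarithm to the natural decomposition of $u_{\frac 32}(x)-u_{\frac 32}(0)$ as a sum over independent Fourier modes. Since the rotational symmetry of the laws of the $\{g_n\}_{n\in\Z}$ makes $u_{\frac 32}$ a stationary process on $\T$, the left-hand side of \eqref{LIL16a} is almost surely independent of $y$, so it is enough to prove the claim at $y=0$. Setting $Z(x) := u_{\frac 32}(x) - u_{\frac 32}(0) = \sum_{n \ne 0} g_n(e^{inx}-1)/\jb{n}^{3/2}$, each $Z(x)$ is a mean-zero circular-symmetric complex Gaussian variable with $\E|Z(x)|^2 = \sigma_{\frac 32}(x) \sim C x^2 \log(1/|x|)$ by \eqref{COV2}. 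The decisive structural feature, highlighted in the discussion following Proposition~\ref{prop: benassi}, is that $Z(x)$ admits a representation as a sum of \emph{mutually independent} complex Gaussian contributions indexed by Fourier modes; this is what makes a direct appeal to Khintchine's LIL possible and bypasses the non-applicability of Proposition~\ref{thm:LIL} in the critical regime.

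For the lower bound, I evaluate $Z$ along the dyadic sequence $x_j := 2^{-j}$. The Taylor expansion $e^{inx_j} = 1 + inx_j + O((nx_j)^2)$, valid for $|n|\le 2^j$, permits the decomposition
\[
Z(x_j) = i x_j W_j + E_j, \qquad W_j := \sum_{0 < |n|\le 2^j} \frac{n\, g_n}{\jb{n}^{3/2}},
\]
where the error $E_j$ (collecting the Taylor remainder and the high-frequency tail $|n|>2^j$) has $\E|E_j|^2 = O(x_j^2)$, strictly of lower order than $\E|ix_j W_j|^2 = x_j^2\, \E|W_j|^2 \sim C' x_j^2 \log(1/x_j)$. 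The key point is that $W_j$ is the $j$th partial sum of the independent complex Gaussian increments $W_j - W_{j-1} = \sum_{2^{j-1}<|n|\le 2^j} n g_n/\jb{n}^{3/2}$, each of variance $\Theta(1)$. Khintchine's classical LIL (applied, e.g., separately to the independent real random walks $\Re W_j$ and $\Im W_j$) yields $\limsup_{j\to\infty} |W_j|/\sqrt{2\, \E|W_j|^2 \, \log\log j} = 1$ almost surely. Using $\E|W_j|^2 \sim C' \log(1/x_j)$ and $\log\log j \sim \log\log\log(1/x_j)$ then gives $\limsup_{j\to\infty} |Z(x_j)|/\phi(x_j) \ge 1$ a.s., where $\phi(h) := 2^{3/2} h\sqrt{\log(1/h)\log\log\log(1/h)}$.

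For the matching upper bound, which must hold for the full continuous family $\{Z(x): 0<|x|<\delta\}$, I proceed in two steps. First, on a sparser geometric subsequence $j_k := \lfloor\rho^k\rfloor$ with $\rho>1$, the Gaussian tail bound
\[
P\big(|Z(x_{j_k})|>(1+\eps)\phi(x_{j_k})\big) \le \exp\big(-(1+\eps)^2 c \log\log j_k\big) \les k^{-(1+\eps)^2 c}
\]
is summable for $\eps>0$ sufficiently small, so Borel--Cantelli gives $\limsup_k |Z(x_{j_k})|/\phi(x_{j_k}) \le 1$ a.s.\ (after letting $\eps\downarrow 0$). Second, the intermediate fluctuation $\sup_{x\in [x_{j_{k+1}},x_{j_k}]}|Z(x)-Z(x_{j_k})|$ is controlled by a Gaussian chaining/Dudley estimate in the intrinsic pseudo-metric $d(x,y):=\sqrt{\E|Z(x)-Z(y)|^2}=\sqrt{\sigma_{\frac32}(|x-y|)}$; since $d(x,y)$ on these short intervals is much smaller than $\phi(x_{j_k})$, the fluctuation is negligible and the upper bound extends to the full continuous limsup. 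The main obstacle in carrying out this plan will be pinning down the precise constant $2^{\frac 32}$: it requires careful bookkeeping of the normalization convention ($\text{Var}(g_n) = 2$), of the exact leading constant in $\sigma_{\frac 32}(x) \sim C x^2 \log(1/|x|)$, and of the LIL constant for the complex-valued partial sum $W_j$, whose circular symmetry imposes a specific relationship between the LIL of $|W_j|$ and $\E|W_j|^2$.
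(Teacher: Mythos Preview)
Your core strategy---reduce to $y=0$, isolate the dominant linear-in-$x$ contribution as a random walk in Fourier modes, and invoke Khintchine's law of the iterated logarithm---is exactly the paper's approach. The paper's execution is a bit more streamlined: it first separates the cosine and sine parts of \eqref{LIL16}, shows the cosine part and the high-frequency and Taylor-remainder pieces of the sine part all have variance $O(x^2)$ and are killed by Borel--Cantelli, and then partitions the remaining sum $x\sum_{n=1}^{\lfloor 1/|x|\rfloor} n\jb{n}^{-3/2}(g_n-g_{-n})$ into blocks $[N(k-1)+1,N(k)]$ chosen so that each block has variance essentially equal to $1$ (forcing $N(k)\sim e^k$). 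This reduces everything to a \emph{single} walk $m\mapsto\sum_{k\le m}Y_k$ with i.i.d.-like increments, and Khintchine's LIL for that walk delivers both the upper and lower bounds simultaneously. Your dyadic blocking is an equally valid variant.

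There is, however, a genuine gap in your upper-bound sketch. Taking a sparse subsequence $j_k=\lfloor\rho^k\rfloor$ does make the Gaussian-tail probabilities summable, but it makes the interpolation intervals $[x_{j_{k+1}},x_{j_k}]$ enormously wide: the ratio of endpoints is $2^{j_{k+1}-j_k}\sim 2^{\rho^k(\rho-1)}$. Consequently $\phi(x_{j_{k+1}})\lll\phi(x_{j_k})$, and the bound $|Z(x)-Z(x_{j_k})|\ll\phi(x_{j_k})$ that chaining can provide is useless for controlling $|Z(x)|/\phi(x)$ when $x$ sits near the left end of the interval. No single anchor point can serve the whole interval, so the argument as written does not close. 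The clean fix is the one implicit in the paper: the Khintchine LIL you already applied to $W_j$ in the lower bound gives the matching \emph{upper} bound $\limsup_j|W_j|/\sqrt{2\,\mathbb{E}|\Re W_j|^2\log\log j}\le 1$ along the full dyadic sequence as well; then you only need to interpolate over the short intervals $[x_{j+1},x_j]$ (ratio $2$), where $\phi$ varies by $1+o(1)$ and the increment $Z(x)-Z(x_j)$ has variance $O(x_j^2)$, hence is $o(\phi(x_j))$ by a routine Borel--Cantelli argument. Drop the sparse subsequence and the separate Gaussian-tail step entirely.
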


Once we prove Proposition \ref{PROP:LL}, we can proceed as in Subsection \ref{SUBSEC:5.1}
when $s = \frac 32$.
For $s \in \frac 32 + \N$, the modification is straightforward following the second half of Subsection 
\ref{SUBSEC:5.2} and thus we omit details.

\begin{proof}
Without loss of generality, set $y = 0$.
By writing 
\begin{align}
u_{\frac32}(x)-u_{\frac32}(0)
&= \sum_{n\in \Z\setminus\{0\}}^\infty \frac{e^{inx}-1}{\jb{n}^\frac{3}{2}}g_n \notag \\
&= \sum_{n=1}^\infty \frac{\cos(nx)-1}{\jb{n}^\frac{3}{2}} (g_n+g_{-n})
 +i \sum_{n=1}^\infty \frac{ \sin(nx)}{\jb{n}^\frac{3}{2}}(g_n-g_{-n}),
 \label{LIL16}
\end{align}

\noi
we first show that the first term on the right-hand side of \eqref{LIL16} does not contribute
to the limit in \eqref{LIL16a}.
Then, we break up the second sum into $\log \frac{1}{|x|}$ pieces, each with variance of order 1, plus a small remainder, 
and then  apply the classical law of the iterated logarithm for a sum of i.i.d.~random variables. 
As we see below, 
the leading order contribution comes from the sum
\begin{equation}\label{sineterm}
\sum_{n=1}^{\lfloor \frac{1}{|x|}\rfloor }
\frac{\sin (nx)}{\jb{n}^\frac{3}{2}}(g_n-g_{-n}).
\end{equation}

We split the first sum on the right-hand side of  \eqref{LIL16} into 
$\big\{n > \lfloor \frac{1}{|x|}\rfloor \big\}$
and $\big\{1 \leq n \leq \lfloor \frac{1}{|x|}\rfloor \big\}$.
The contribution from $\big\{n > \lfloor\frac{1}{|x|}\rfloor\big\}$ is a mean-zero Gaussian random variable with variance
\begin{align}
 \s_L^2 := 4\sum_{n=L}^\infty \frac{(\cos(nx)-1)^2}{\jb{n}^3} \les L^{-2}.
\label{V1}
 \end{align}

\noi
In particular, when $L = \lfloor\frac{1}{|x|}\rfloor + 1$, 
we have $\s_L^2 = O(x^2)$.
Then, 
for $\lambda>0$, we have
\[P\left(\bigg|\sum_{n=L}^\infty \frac{\cos (nx)-1}{\jb{n}^\frac{3}{2}}(g_n+g_{-n})\bigg| 
\ge  \s_L \ld \right)\les e^{-c\ld^2}. \]

\noi
Taking $\ld= c \sqrt{\log L}$ for sufficiently large $c>1$, 
the right-hand side is summable in $L$.
Hence, by  the Borel-Cantelli lemma and the variance bound \eqref{V1},
there exists  $C>0$ such that
\[\sup_{L\ge L_0} \frac{\left|\sum^\infty_{n=L} 
\frac{\cos (nx)-1}{\jb{n}^\frac{3}{2}}(g_n+g_{-n})\right|}{CL^{-1}\sqrt{\log L}}\le 1 \]

\noi
for some $L_0 = L_0(\o) < \infty$ 
 with probability 1. 
 As a consequence, we obtain
\[\limsup_{x\to 0} \frac{\left|\sum_{n>\lfloor \frac{1}{|x|}\rfloor }
 \frac{\cos(nx)-1}{\jb{n}^\frac{3}{2}}(g_n+g_{-n})\right|}{|x|\sqrt{\log \frac{1}{|x|}\log\log\log \frac{1}{|x|}}}=0\]
almost surely.

The contribution from $\{ 1\leq n \leq \lfloor \frac{1}{|x|}\rfloor \}$ to the first sum 
on the right-hand side of  \eqref{LIL16} 
can be estimated in a similar manner by noticing
that it is a mean-zero Gaussian random variable
with variance
\[ \sum_{n=1}^{\lfloor \frac{1}{|x|}\rfloor}  \frac{(\cos(nx)-1)^2}{\jb{n}^3}
\les x^4 \sum_{n=1}^{\lfloor \frac{1}{|x|}\rfloor}  \frac{n^4}{\jb{n}^3}\les x^2. \]

\noi
This shows that the contribution from the first sum on the right-hand side of  \eqref{LIL16}
to the limit \eqref{LIL16a} is 0.

Next, we consider the second sum on the right-hand side of \eqref{LIL16}.
The contribution from $\{  n > \lfloor \frac{1}{|x|}\rfloor \}$ 
can be estimated as above.
We split the main term in \eqref{sineterm} as follows.
Write
\begin{equation}\label{split}
\sum_{n=1}^{\lfloor \frac{1}{|x|}\rfloor }\frac{\sin (nx)}{\jb{n}^\frac{3}{2}}
 (g_n-g_{-n})
= x\sum_{n=1}^{\lfloor \frac{1}{|x|}\rfloor }\frac{n}{\jb{n}^\frac{3}{2}}(g_n-g_{-n})
+\sum_{n=1}^{\lfloor \frac{1}{|x|}\rfloor}\frac{h(nx)}{\jb{n}^\frac{3}{2}}(g_n-g_{-n}),
\end{equation}

\noi
where 
\[h(z) = \sin z - z \sim z^3(1+o(1)).\]

\noi
The second term in \eqref{split} can be treated as a remainder
by noticing that that it is a mean-zero Gaussian random variable
with variance
\begin{align*}
4\sum_{n=1}^{\lfloor \frac{1}{|x|}\rfloor } \frac{h^2(nx )}{\jb{n}^3} 
\les  x^6\sum_{n=1}^{\lfloor \frac{1}{|x|}\rfloor }\frac{n^6}{\jb{n}^3}
\les x^2.
\end{align*}

It remains to consider the first term in \eqref{split}. 
First, define a sequence $\{N(k)\}_{k = 0}^\infty \subset \N$
by setting $N(0) = 0$ and 
\[N(k) = \min\bigg\{N> N(k-1): \sum_{n=N(k-1)+1}^{N}\frac{n^2}{\jb{n}^3} \geq 1\bigg\}\]

\noi
for $k \in \N$.
Noting that 
\begin{align}
\sum_{n=M}^N \frac{n^2}{\jb{n}^3}
= \sum_{n=M}^N \frac{1}{n} + O\bigg(\sum_{n = M}^N \frac{1}{n^3}\bigg) 
= \log N - \log M +O\left(\frac{1}{M}\right)
\label{LIL18}
\end{align}

\noi 
for $N > M \geq 1$, 
we first see that  $N(k) \ge C_1 e^k$.
Using \eqref{LIL18} once again,
\begin{align*}
\log N(k) + O(1)= 
\sum_{n=1}^{N(k)} \frac{n^2}{\jb{n}^3}
\leq k +  \sum_{n=1}^k\frac{1}{N(n)+1}
\leq k + O(1), 
\end{align*}

\noi
giving $N(k) \leq C_2 e^k$.
Putting together, we have
 \begin{align}
C_1 e^k \leq N(k) \leq C_2 e^k.
\label{LIL19}
 \end{align}

Now, we define a sequence $\{X_{k}\}_{k\in \N}$ of independent Gaussian random variables by setting 
\[X_k = x\sum_{n=N(k-1)+1}^{N(k)}\frac{n}{\jb{n}^\frac{3}{2}}(g_n-g_{-n}). \]

\noi
Then, we have 
\[\mathbb{E}\big[|\Re X_k|^2\big] =
\mathbb{E}\big[|\Im X_k|^2\big] =  x^2\big(1+ O(N(k)^{-1})\big).\]

Finally, 
define $L(|x|)$ by 
\[L(|x|)=\inf \left\{ k: N(k) \ge \Big\lfloor \frac{1}{|x|}\Big\rfloor \right\}.\]

\noi
Then, from \eqref{LIL19}, we have 
\[L(|x|)=\log \frac{1}{|x|}\cdot (1+o(1)).\]

\noi
Applying  Khintchine's law of the iterated logarithm to the sum
\[S(x) = \sum_{k=1}^{L(|x|)} X_k,\]

\noi
we find
\[\limsup_{x\to  0} \frac{S(x)}{2^{\frac 32}|x|\sqrt{\log\frac{1}{|x|}\log\log\log{\frac{1}{|x|}}}}=1\]
almost surely.
This completes the proof of Proposition \ref{PROP:LL}.
\end{proof}

\begin{ackno}\rm
T.O.~was supported by the European Research Council (grant no.~637995 ``ProbDynDispEq'').
N.T.~was supported by the European Research Council (grant no.~257293 ``DISPEQ'').
The authors are also grateful to the anonymous referees for their helpful comments that have improved the presentation of this paper.
\end{ackno}

\end{document}